\newtheorem{thm}{Theorem}[section]
\newtheorem{cor}[thm]{Corollary}
\newtheorem{lem}[thm]{Lemma}
\newtheorem{clm}[thm]{Claim}
\newtheorem{prop}[thm]{Proposition}
\newtheorem{quest}[thm]{Question}
\newtheorem{conj}[thm]{Conjecture}
\newtheorem{rmk}[thm]{Remark}
\newtheorem{hyp}[thm]{Hypothesis}
\newtheorem{exa}[thm]{Example}
\theoremstyle{definition}
\newtheorem{defi}[thm]{Definition}
\numberwithin{equation}{section}\def \N {\mathbb N}
\def \M {\mathcal M}
\def \L {\mathcal L}
\def \O {\mathcal O}
\def \X {\mathcal X}
\def \F {\mathbb F}
\def \A {\mathbb A}
\def \Q {\mathbb Q}
\def \C {\mathbb C}
\def \Z {\mathbb Z}
\def \P {\mathbb P}
\def \B {\mathbb B}
\def \tY {\widetilde Y}
\def \Def {\text{Def}}
\def \Aut {\text{Aut}}
\def \Kur {\text{Kur}}
\def \Hilb {\text{Hilb}}
\def \SL {\text{SL}}
\begin{document}
\title[]
{Compact Moduli Spaces of Del Pezzo Surfaces and  K\"ahler-Einstein metrics}
\author{Yuji Odaka}
\address{Department of Mathematics, Kyoto University, Kyoto 606-8502, Japan}
\email{yodaka@math.kyoto-u.ac.jp}

\author{Cristiano Spotti}
\address{DPMMS, Centre for Mathematical Sciences, Wilberforce Road, Cambridge CB3 0WB, United Kingdom}
\email{c.spotti@dpmms.cam.ac.uk / cristiano.spotti@gmail.com}

\author{Song Sun}
\address{Department of Mathematics, Stony Brook University, Stony Brook, NY 11794-3651, USA}
\email{song.sun@stonybrook.edu }

\date{\today}
\maketitle

\begin{abstract}
We prove that the Gromov-Hausdorff compactification of the moduli space of K\"ahler-Einstein Del Pezzo surfaces in each degree agrees with certain algebro-geometric compactification. In particular, this recovers  Tian's theorem on the existence of K\"ahler-Einstein metrics on smooth Del Pezzo surfaces and classifies all the degenerations of such metrics. The proof is based on a combination of both  algebraic and differential geometric techniques. 
\end{abstract}

\tableofcontents

\section{Introduction} \label{Intro}

   For each positive integer $d$, we denote by $M_d^{GH}$ the \emph{Gromov-Hausdorff compactification} of the moduli space of degree $d$ K\"ahler-Einstein Del Pezzo surfaces, and denote by $M_d^0$ the dense subset that parametrizes those smooth surfaces.   It is well-known that for $d\geq 5$ the moduli space is just a single point, so in this paper we will always assume $d\in \{1, 2, 3,4\}$. 
By Tian-Yau \cite{TY} we know that $M_d^0$ is at least a non-empty set. By general theory, $M_d^{GH}$ is a compact Hausdorff space under the Gromov-Hausdorff topology. By \cite{An, BKN, Tian1},  points in $M_d^{GH}\setminus M_d^0$ parametrise certain K\"ahler-Einstein log Del Pezzo surfaces, and a famous theorem of Tian \cite{Tian1} says that, every smooth Del Pezzo surface admits a K\"ahler-Einstein metric 
so that it is actually parametrized in $M_d^0$. 
        
 In this paper for each $d$ we identify $M_d^{GH}$ with certain explicit algebro-geometric moduli space of log Del Pezzo surfaces. The latter is 
 a compact Moishezon analytic space $M_d$, which, roughly speaking, 
parametrizes isomorphism  classes of certain $\Q$-Gorenstein smoothable log Del Pezzo surfaces of degree $d$. 
Notice that there are a-priori several possibilities of such algebro-geometric compactifications of the moduli varieties.
On the other hand, the Gromov-Hausdorff compactification is clearly canonical but very non-algebraic and just topological in nature. We refer to \cite{BBI} as an introductory textbook for those who are not familiar with 
the Gromov-Hausdorff topology. The following main theorem of the present article builds a bridge between the two notions of moduli spaces. 

\begin{thm}\label{MT}
  For each integer $d$, there is a compact moduli algebraic space \footnote{For $d\neq 1$, it follows from the construction that $M_d$ is actually a \emph{projective variety}.} $M_d$, that will be constructed explicitly in later sections, and a homeomorphism $$\Phi\colon M_d^{GH} \rightarrow M_d,$$ such that $[X]$ and $\Phi([X])$ parametrize isomorphic log Del Pezzo surfaces for any $[X]\in M_d^{GH}$. 
  Moreover, $M_d$ contains a (Zariski) open dense subset which parametrizes all smooth degree $d$ Del Pezzo surfaces. 
\end{thm}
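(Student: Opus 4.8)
The plan is to construct $M_d$ purely algebro-geometrically and then to compare it with $M_d^{GH}$ through the natural assignment that sends a Gromov--Hausdorff limit to the isomorphism class of its underlying log Del Pezzo surface. The guiding principle is that, since both $M_d^{GH}$ and $M_d$ are compact Hausdorff spaces, it suffices to produce a continuous bijection $\Phi$ between them, which is then automatically a homeomorphism. So the work splits into building $M_d$, defining $\Phi$, and verifying continuity, injectivity, and surjectivity separately.

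First I would analyze an arbitrary point $[X]\in M_d^{GH}$. By the cited results of Anderson, Bando--Kasue--Nakajima and Tian, $X$ is a K\"ahler--Einstein log Del Pezzo surface with at worst quotient (klt, orbifold) singularities, and I would show that such an $X$ is in fact $\Q$-Gorenstein smoothable of anticanonical degree $d$, by exhibiting it as a limit of smooth degree-$d$ surfaces and controlling the local deformation theory $\Def$ of its singularities. Together with uniform bounds on the anticanonical volume and the Gorenstein index, this yields boundedness: a fixed $m$ for which $-mK_X$ is very ample embeds every such surface into a single $\P^N$ with fixed Hilbert polynomial. I would then place all of them in one component of $\Hilb$, restrict to the locally closed locus of $\Q$-Gorenstein smoothable log Del Pezzo surfaces, and form $M_d$ as the good moduli space (a GIT quotient with respect to the CM or Chow polarization); the Moishezon --- and for $d\neq1$ projective --- structure comes from this construction, while the locus of smooth Del Pezzo surfaces appears as the Zariski-open stable locus.

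Next I would define $\Phi([X])$ to be the point of $M_d$ representing $X$ as a log Del Pezzo surface; well-definedness rests on the algebraicity of Gromov--Hausdorff limits. Continuity is the main differential-geometric input: by Donaldson--Sun theory, a Gromov--Hausdorff convergent sequence of (possibly singular) K\"ahler--Einstein Del Pezzo surfaces, once embedded by a fixed power of the anticanonical bundle, converges algebraically in $\Hilb$, so the corresponding points converge in $M_d$ and $\Phi$ is continuous. Injectivity I would deduce from uniqueness of K\"ahler--Einstein metrics (a Bando--Mabuchi type statement extended to the klt setting): two points of $M_d^{GH}$ with isomorphic underlying surfaces carry isometric metrics and hence coincide.

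The hard part will be surjectivity, i.e.\ showing that every $\SL$-polystable $\Q$-Gorenstein smoothable log Del Pezzo surface parametrized by $M_d$ genuinely admits a K\"ahler--Einstein metric and arises as a Gromov--Hausdorff limit. Given such an $X_0$, I would take a $\Q$-Gorenstein smoothing $\mathcal{X}\to \Delta$ with smooth general fibers $X_t$; each $X_t$ carries a K\"ahler--Einstein metric by Tian's theorem, and a subsequence converges in the Gromov--Hausdorff sense to some $X_\infty\in M_d^{GH}$. The essential obstacle is that the differential-geometric limit $X_\infty$ need not a priori coincide with the algebro-geometric central fiber $X_0$, since the two could differ by a further degeneration. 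I expect to resolve this by combining the algebraicity of $X_\infty$ from the continuity step with the K-polystability of $X_0$, to conclude that $X_\infty$ and $X_0$ lie in the same $\SL$-orbit closure in $\Hilb$ and are both polystable, hence isomorphic. This identifies $X_0$ with a point of the image and shows it admits a K\"ahler--Einstein metric; together with the previous steps it makes $\Phi$ a continuous bijection of compact Hausdorff spaces, and therefore a homeomorphism.
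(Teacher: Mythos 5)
Your overall skeleton (continuous bijection between compact Hausdorff spaces, continuity from Donaldson--Sun, injectivity from Bando--Mabuchi) matches the paper's strategy, but the step you dispose of in one sentence --- ``form $M_d$ as the good moduli space (a GIT quotient with respect to the CM or Chow polarization)'' of a locus in $\Hilb$ --- is precisely the part that does not exist as stated, and it is where essentially all of the paper's work lies. The CM line bundle is not ample, indeed not even nef, on such parameter spaces in general, so there is no GIT quotient of the Hilbert scheme with respect to it; and asymptotic Chow stability is known to fail for some K\"ahler--Einstein varieties, so a Chow GIT quotient would not parametrize the right objects either. The paper's substitute (Theorem \ref{CM stability}) only yields ampleness of the CM line on a \emph{projective base of Picard rank one} containing a nontrivially degenerating K-polystable fiber; this forces one to work with small explicit linear systems (pencils of quadrics, cubic forms, quartic plane curves, weighted sextics) rather than with $\Hilb$ itself. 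Even then, for $d=2$ the GIT quotient of quartics has a point (the double conic) parametrizing a non-normal surface and must be blown up \`a la Shah--Mukai, and for $d=1$ one needs a GIT quotient, a weighted blow-up, and two further local modifications on Kuranishi spaces to insert the surfaces $X_1^T$ and $X_1^e$ and excise a $\P^1$ of non-polystable surfaces --- which is why $M_1$ is only an algebraic space. Your construction also silently requires the classification of Gromov--Hausdorff limits (the Bishop--Gromov bound on orbifold groups and the double-cover structure in degrees one and two) to know that every limit is parametrized at all; without it $\Phi$ is not even well defined.

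Your surjectivity argument is also a genuinely different route from the paper's, and as written it is both incomplete and strictly weaker. The paper never smooths a given polystable surface: it shows the image of $\Phi$ is open in the connected set $M_d^{\text{sm}}$ (implicit function theorem of LeBrun--Simanca) and closed in $M_d$ (continuity from a compact source), so nonemptiness of $M_d^0$ (Tian--Yau) forces the image to contain the dense set $M_d^{\text{sm}}$ and hence all of $M_d$; this simultaneously reproves Tian's theorem, which your appeal to ``each $X_t$ carries a K\"ahler--Einstein metric by Tian's theorem'' forfeits. Your remaining step --- identifying the Gromov--Hausdorff limit $X_\infty$ of the smoothing with the prescribed central fiber $X_0$ --- can be completed, but not by the orbit-closure assertion you gesture at: you need Berman's result that $X_\infty$ is K-polystable, the CM comparison to convert this into GIT polystability in the \emph{specific} parameter space used to build $M_d$, and then separatedness of $M_d$ to conclude $[X_\infty]=[X_0]$ and hence $X_\infty\cong X_0$. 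These are exactly the ingredients (Theorem \ref{KEtoKstability}, Theorem \ref{CM stability}, Lemma \ref{continuity}) whose development constitutes the body of the paper, so ``I expect to resolve this'' leaves the essential content unproved.
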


\noindent
For the precise formulation, see Section \ref{Moduli space}. 
Theorem \ref{MT} immediately implies the above mentioned theorem of Tian, and also classifies all degenerations of K\"ahler-Einstein Del Pezzo surfaces which was posed as a problem in \cite{Tian2}. When $d=4$ Theorem \ref{MT} was proved by Mabuchi-Mukai \cite{MM}, and we shall provide a slightly different proof based on our uniform strategy. For other degrees, there have been partial  results by \cite{Chel}, \cite{CW},  \cite{GK}, \cite{Shi}, \cite{Wang} on the existence of 
K\"ahler-Einstein metrics on some canonical Del Pezzo surfaces, by calculating $\alpha$-invariant. 

A minor point is that the Gromov-Hausdorff topology defined here is slightly different from the standard definition, in that we also remember the complex structure when we talk about convergence. See \cite{DS}, \cite[Chapter 1 and 4]{Spotti} and Section \ref{DG input} for a related discussion on this.  The standard Gromov-Hausdorff compactification is homeomorphic to the quotient of $M_d$ by the involution which conjugates the complex structures. 

For the proof of Theorem \ref{MT}, we do not need to assume the existence of K\"ahler-Einstein metrics on all the smooth Del Pezzo surfaces. 
The only assumption which we need, and which has been originally proved by Tian-Yau \cite{TY}, is the following:

\begin{hyp}\label{Tian-Yau}
For each $d\in\{1,2,3,4\}$, $M_d^0$ is non-empty as a set.
\end{hyp}

Given this, the main strategy of proving Theorem \ref{MT} is as follows: 

\begin{enumerate}
\item For each $d$, we construct a natural moduli variety $M_d$ with a Zariski open subset $M_d^{\rm sm}$ parametrizing all smooth degree $d$ Del Pezzo surfaces. Moreover, there is a well-defined continuous  map $\Phi\colon M_d^{GH}\rightarrow M_d$, where we use the Gromov-Hausdorff distance in the domain and the local analytic topology in the target, so that $[X]$ and $\Phi([X])$ parametrize isomorphic log Del Pezzo surfaces for any $[X]\in M_d^{GH}$.
\item $\Phi$ is injective. This follows from the uniqueness theorem of Bando-Mabuchi \cite{BM} and its  extension to orbifolds. 
\item $\Phi$ is surjective. This  follows from the fact that the image of $\Phi$ is  open in $M_d^{\rm sm}$ (by the implicit function theorem, see for example \cite{LS}) and closed in $M_d$ (by the continuity of $\Phi$ in (1)).
\item Since $M_d^{GH}$ is compact and $M_d$ is Hausdorff,   then $\Phi$ is a homeomorphism. 
\end{enumerate}

The main technical part lies in Step (1).  For this we need  first to investigate Gromov-Hausdorff limits of K\"ahler-Einstein Del Pezzo surfaces, and then construct a  moduli space that includes all the possible limits. The difficulty increases as the degree goes down.  When $d=3, 4$ we take the classical GIT moduli space on the anti-canonical embedding.  For $d=2$ we take the  moduli space constructed in \cite{Mukai} (based on Shah's idea \cite{Shah} which blows up a certain GIT quotient). For $d=1$ we need to combine Shah's method with further modifications suggested by the differential geometric study of Gromov-Hausdorff limits. As far as we are aware,  this moduli space is new. We should mention that in the last two cases, $M_d$ (and thus $M_d^{GH}$) contains points that parametrize non-canonical log Del Pezzo surfaces. This disproves a conjecture of Tian in \cite{Tian1}, see Remark \ref{Tian conjecture}. 
We also remark that   Gromov-Hausdorff limits of K\"ahler-Einstein Del Pezzo surfaces was first 
studied by Tian in \cite{Tian1}, but as we shall see there are some inaccuracies in \cite{Tian1}, see Remark \ref{remark BG}  and Example \ref{degree1 toric}. 
 
 Finally we remark that for each $d\in \{1,2,3,4\}$, it is easy to find explicit examples of singular degree $d$ $\Q$-Gorenstein smoothable K\"ahler-Einstein log Del Pezzo surface by a global quotient construction (see the examples in later sections). Thus one way to avoid assuming Hypothesis \ref{Tian-Yau} would be to find a smooth K\"ahler-Einstein Del Pezzo surface by a gluing construction. For example, it has been proved in \cite{Spotti2} that for a K\"ahler-Einstein log Del Pezzo surface with only nodal singularities and discrete automorphism group, one can glue model Eguchi-Hanson metrics to obtain nearby K\"ahler-Einstein metrics in the smoothing. This can be applied when $d=3$,  since the Cayley cubic (see Section \ref{Degree34}) satisfies these assumptions. 
 
The organization of this paper is as follows. In Section \ref{DG input} we collect the main results that we need on the structure of Gromov-Hausdorff limits, focusing on the two dimensional case. In Section \ref{AG input} we make an algebro-geometric study of the Gromov-Hausdorff limits, and define precisely the notion of moduli spaces that we use in this paper. Then we reduce the proof of Theorem \ref{MT} to the construction of moduli spaces in each degree.  In later Sections we treat the cases $d\geq 3$ and $d\leq 2$ separately. We also investigate the relation with moduli space of curves, in 
subsections \ref{curve.2}, \ref{curve.1}. In Section \ref{K moduli} we make some further discussions. \\

\textbf{Notation}: 

A \emph{Del Pezzo surface} is a smooth projective surface with ample anti-canonical bundle. A \emph{log Del Pezzo surface} is a normal projective surface with quotient singularities (or equivalently, with log terminal singularities) and ample anti-canonical divisor.  For a log Del Pezzo surface $X$, its degree $\deg(X)$ is the intersection number $K_X^2$. In  general dimensions, a {\textit{$\mathbb{Q}$-Fano variety}} means a normal projective variety with log terminal singularities and with $-rK_X$ ample for some positive integer $r$. Smallest such $r$ will be called {\textit{index}} or 
{\textit{Gorenstein index}}. \\

\textbf{Acknowledgements}: This work is motivated by the PhD Thesis of the second named author under the supervision of Professor Simon Donaldson. We would like to  thank him for great support. The pre-print version of this paper was written when all the authors were based at Imperial College London. We would also like to thank Professors Jarod Alper, Claudio Arezzo, 
Paolo Cascini, Ivan Cheltsov, Xiuxiong Chen, Mark Haskins,
David Hyeon, Alexander Kasprzyk, Radu Laza, 
Yongnam Lee, Shigeru Mukai, Hisanori Ohashi, Shingo Taki and Bing Wang for  helpful discussions and encouragements.  S.S. was partly  funded by European Research Council award No 247331.

\section{General results on the Gromov-Hausdorff limits}\label{DG input}

The main differential geometric ingredient involved in the proof of the main theorem is the study of the structure of Gromov-Hausdorff limits of K\"ahler-Einstein Del Pezzo surfaces. 
The following orbifold compactness theorem is well-known.

\begin{prop}[\cite{An}, \cite{BKN}, \cite{Tian1}]\label{orbifold compactness}
Given a sequence of degree $d$ K\"ahler-Einstein Del Pezzo surfaces $(X_i, \omega_i, J_i)$ then, by passing to a subsequence, it converges in the Gromov-Hausdorff sense to a K\"ahler-Einstein log Del Pezzo surface $(X_\infty, \omega_\infty, J_\infty)$, and $\deg(X_\infty)=d$.
\end{prop}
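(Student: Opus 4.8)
The plan is to reduce this to the general orbifold compactness theory for Einstein four-manifolds developed in \cite{An, BKN, Tian1}, and then to upgrade the resulting Riemannian orbifold limit to a K\"ahler--Einstein log Del Pezzo surface by exploiting the parallel complex structure. I normalize the metrics so that $\mathrm{Ric}(\omega_i)=\omega_i$; since $X_i$ is a degree $d$ Del Pezzo surface the relevant cohomology class is fixed, $[\omega_i]=2\pi c_1(X_i)$, and hence the volume $\mathrm{Vol}(X_i,\omega_i)=\tfrac12\int_{X_i}\omega_i^2$ is a fixed multiple of $c_1^2=d$.

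First I would collect the uniform geometric bounds that feed the compactness machinery. Myers' theorem yields a uniform diameter bound from the positive Ricci lower bound, and Bishop--Gromov together with the fixed total volume gives two-sided volume bounds and, crucially, a uniform lower bound for the volume of unit balls (non-collapsing). The $L^2$ energy of the curvature is controlled topologically: for a smooth degree $d$ Del Pezzo surface Noether's formula gives $c_2=\chi_{\mathrm{top}}=12-d$, and since the scalar curvature is a fixed constant after normalization, the Gauss--Bonnet and signature integrands for an Einstein four-manifold yield a uniform bound on $\int_{X_i}|Rm|^2$ depending only on $d$.

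With these bounds in hand I would invoke the $\varepsilon$-regularity theorem: wherever the local $L^2$ curvature energy is below a dimensional threshold, the curvature is pointwise controlled on a smaller ball. Passing to a subsequence, this produces a Gromov--Hausdorff limit $X_\infty$ which is a compact Einstein Riemannian orbifold with finitely many singular points (their number bounded by the total energy) and $C^\infty_{\mathrm{loc}}$ convergence of the metrics away from these points; at each singular point an ALE gravitational instanton bubbles off and the singularity is modeled on a flat cone $\C^2/\Gamma$ for a finite subgroup $\Gamma\subset U(2)$ acting freely on the sphere.

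Finally I would promote this metric limit to the stated algebro-geometric object. Since the complex structure $J_i$ is parallel for the K\"ahler metric, it converges in $C^\infty_{\mathrm{loc}}$ together with $\omega_i$ to an integrable $J_\infty$ away from the singular set, and extends across the isolated singular points by a removable-singularity argument; thus $(X_\infty,J_\infty)$ is a normal complex surface whose singularities are the log terminal quotient singularities $\C^2/\Gamma$, and $\omega_\infty$ is K\"ahler--Einstein. Positivity of $\mathrm{Ric}(\omega_\infty)=\omega_\infty$ forces $-K_{X_\infty}$ to be ample, so by the orbifold Kodaira embedding $X_\infty$ is a projective log Del Pezzo surface. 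The degree is preserved because volume is continuous under non-collapsed Gromov--Hausdorff convergence (the singular set has measure zero and the convergence is smooth off it), so $K_{X_\infty}^2$ is the same multiple of $\mathrm{Vol}(X_\infty,\omega_\infty)=\lim_i\mathrm{Vol}(X_i,\omega_i)$, giving $\deg(X_\infty)=d$. I expect the main obstacle to be precisely the two steps bridging Riemannian and algebraic geometry: establishing non-collapsing, so that the limit is genuinely a four-dimensional orbifold rather than a lower-dimensional collapsed space, and identifying the bubbling tangent cones exactly as the complex-analytic quotient singularities $\C^2/\Gamma$ with $\Gamma\subset U(2)$, which is what makes $X_\infty$ a \emph{log} Del Pezzo surface rather than a merely Riemannian orbifold.
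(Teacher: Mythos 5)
Your proposal is a correct reconstruction of the classical orbifold compactness argument, and it is essentially the proof contained in the cited references \cite{An}, \cite{BKN}, \cite{Tian1}: the paper itself offers no proof of this proposition, stating it as well known. All the ingredients you list are the right ones --- Myers for the diameter, Bishop--Gromov plus the topologically fixed volume $2\pi^2 d$ for non-collapsing, Chern--Gauss--Bonnet with $c_2 = 12-d$ and vanishing traceless Ricci for the uniform $L^2$ curvature bound, $\varepsilon$-regularity and bubbling for the orbifold structure, and the parallel complex structure plus removable singularities and orbifold Kodaira embedding to land on a projective log Del Pezzo surface of the same degree. You correctly identify where the real work sits, namely in passing from a Riemannian orbifold with cone singularities $\C^2/\Gamma$, $\Gamma \subset SO(4)$, to a normal complex surface with holomorphic quotient singularities $\Gamma \subset U(2)$; this is the content of the Anderson--Tian removable singularity analysis. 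It is worth noting that the paper points out an alternative, more modern derivation of the orbifold property which bypasses the bubbling analysis for the singularity identification: by Proposition~\ref{Fano limit} (Donaldson--Sun) the limit is a $\Q$-Fano variety, i.e.\ has log terminal singularities, and by Kawamata's theorem a two-dimensional log terminal singularity is automatically a quotient singularity. That route buys the algebraicity and projectivity of $X_\infty$ directly from the uniform partial $C^0$ estimate rather than from an orbifold embedding theorem, at the cost of invoking much heavier machinery; your route is the self-contained four-dimensional one and is what the cited references actually do.
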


In \cite{Tian1} Tian found further constraints on  the possible singularities that could appear in $X_\infty$. We will state a more general theorem and give an alternative proof. 
First we have (compare also \cite{Tian4}):

\begin{prop}[\cite{DS}]\label{Fano limit}
Given a sequence of $n$-dimensional K\"ahler-Einstein Fano manifolds $(X_i, \omega_i, J_i)$, by passing to a subsequence,  it converges in the Gromov-Hausdorff sense to a $\Q$-Fano variety $(X_\infty, J_\infty)$ endowed with a weak K\"ahler-Einstein metric $\omega_\infty$ (cf. \cite{EGZ}). Moreover, there exist integers $k$ and $N$, depending only on $n$, so that we could embed $X_i\,(i\in  \N \cup \{\infty\})$ into  $\P^N$ using orthonormal basis of $H^0(X_i, -kK_{X_i})$ with respect to the Hermitian metric  defined by $\omega_i$, and $X_i$ converges to $X_\infty$ as varieties in $\P^N$. 
 \end{prop}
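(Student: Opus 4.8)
The plan is to follow the standard bridge between Gromov-Hausdorff limits of Kähler-Einstein manifolds and projective geometry: first produce a metric limit, then analyze its regular/singular structure, then promote it to an algebraic object by a \emph{uniform partial $C^0$-estimate}, and finally identify the limit variety together with its metric.

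First I would fix the normalization $\text{Ric}(\omega_i)=\omega_i$. Then the Ricci curvature is bounded below, so Bonnet-Myers gives a uniform diameter bound, while the volumes $\int_{X_i}\omega_i^n/n!=(2\pi)^n(-K_{X_i})^n/n!$ are cohomological and, by boundedness of smooth $n$-dimensional Fanos (Kollár-Miyaoka-Mori), lie in a fixed finite set; in particular they are bounded above and below. Gromov's compactness theorem then yields, along a subsequence, convergence of $(X_i,\omega_i)$ to a compact metric space $(X_\infty,d_\infty)$, non-collapsed thanks to the volume lower bound. At this point I would invoke the Cheeger-Colding-Tian structure theory for non-collapsed limits with two-sided Ricci bounds: $X_\infty=\mathcal{R}\sqcup\mathcal{S}$, where $\mathcal{R}$ is open and dense, carries a smooth Kähler-Einstein metric $\omega_\infty$ to which $\omega_i$ converge in $C^\infty_{\mathrm{loc}}$ (together with the complex structures $J_i\to J_\infty$), and the singular set $\mathcal{S}$ is closed of Hausdorff codimension at least $4$, with every tangent cone a metric cone. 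Thus $(\mathcal{R},J_\infty,\omega_\infty)$ is a genuine Kähler-Einstein manifold.

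The hard part will be the partial $C^0$-estimate, which is the technical heart of \cite{DS}. I would show that there exist $k$ and $b>0$, depending only on $n$ through the fixed diameter, volume and Ricci bounds, such that for the Hermitian line bundle $(-kK_{X_i},h_i^{k})$ with curvature $k\omega_i$, the Bergman function $\rho_{k,i}=\sum_\alpha|s_\alpha|^2_{h_i^{k}}$ of an $L^2$-orthonormal basis of $H^0(X_i,-kK_{X_i})$ satisfies $\inf_{X_i}\rho_{k,i}\geq b$, uniformly in $i$. The mechanism is to construct, near a prescribed point, a holomorphic peak section by solving a $\bar\partial$-problem via Hörmander's $L^2$-estimate, starting from a model section on the (smooth, hence $\C^n$) tangent cone and multiplying it by a cutoff. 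The essential point is that the bound $\mathrm{codim}\,\mathcal{S}\geq 4$ allows cutoff functions with arbitrarily small Dirichlet energy, so the $\bar\partial$-error is controllable, while a gradient estimate for holomorphic sections (again from the Ricci lower bound) upgrades the resulting $L^2$ bound to the desired pointwise lower bound. Establishing that the tangent cones carry a compatible holomorphic/algebraic structure, so that the model section is available, is the delicate ingredient here.

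With the uniform estimate in hand, the maps $\iota_i\colon X_i\to\P^N$ given by the orthonormal basis, where $N+1=h^0(-kK_{X_i})$ is fixed by Riemann-Roch and Kodaira vanishing, are uniformly embeddings, with $\iota_i^*\omega_{FS}$ uniformly equivalent to $k\omega_i$; their images are subvarieties of fixed degree in $\P^N$. Compactness of the Chow variety (equivalently, Bishop's theorem on volume-bounded analytic cycles) then gives, after a further subsequence, convergence of $\iota_i(X_i)$ to a subvariety $W\subset\P^N$. Finally I would identify $W$ with the metric limit: the orthonormal sections extend across $\mathcal{S}$ by Hartogs-type $L^2$-extension, permitted by $\mathrm{codim}\,\mathcal{S}\geq 4$, so that the metric completion of $(\mathcal{R},\omega_\infty)$ is realized as $W$, and $W=X_\infty$ is seen to be normal and $\Q$-Gorenstein Fano. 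That its singularities are log terminal, and that $\omega_\infty$ extends to a weak Kähler-Einstein metric in the sense of \cite{EGZ}, would follow from the bounded local potentials of the limiting metric together with the metric-cone structure of the tangent cones, completing the argument.
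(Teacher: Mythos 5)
This proposition is quoted from Donaldson--Sun \cite{DS} and is not proved in the paper, so there is no internal argument to compare against; your outline is a faithful reproduction of the \cite{DS} strategy (normalization and Myers/Bishop--Gromov for diameter and non-collapsing, Cheeger--Colding--Tian structure theory for the regular/singular decomposition with $\mathrm{codim}\,\mathcal{S}\geq 4$, the partial $C^0$-estimate via H\"ormander's technique transplanted from tangent cones, and Bishop/Chow compactness for the algebraic limit). Two steps in your sketch conceal genuine work beyond the one difficulty you do flag (the algebraic structure of tangent cones). First, the uniform lower bound $\inf\rho_{k,i}\geq b$ only makes the maps $\iota_i$ well defined (base-point-free); the assertion that they are ``uniformly embeddings'' requires a separate argument that sections of $-kK_{X_i}$, for a possibly larger but still uniform $k$, separate points and tangent vectors uniformly in $i$ --- in \cite{DS} this is a further contradiction argument re-running the convergence theory, not a formal consequence of the Bergman bound. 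Second, identifying the Chow limit $W$ with the metric limit $X_\infty$ --- in particular that $W$ is reduced, irreducible and normal, carries no components supported near $\mathcal{S}$, and that the natural map $X_\infty\to W$ is a homeomorphism --- again uses the peak-section construction to separate points in the limit; Hartogs-type $L^2$-extension across $\mathcal{S}$ alone does not deliver injectivity or normality. With these caveats the plan is the correct one and is exactly the route the cited source takes.
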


Here one can think of the convergence as varieties in $\P^N$ as the convergence of defining polynomials. Notice that the orbifold property in Proposition \ref{orbifold compactness} also follows naturally from Proposition \ref{Fano limit}, since by Kawamata's theorem \cite{Kawa} that a two dimensional log terminal singularity is a quotient singularity. \\

We will treat singular varieties that come from certain limits of smooth ones. The following algebro-geometric notion is very natural from the point of view of minimal model program, and will be shown to be also naturally satisfied by the above limit $X_\infty$. 

\begin{defi}
Let $X$ be a $\Q$-Fano variety. We say $X$ is \emph{$\Q$-Gorenstein smoothable} if  there exists a deformation $\pi: \X\rightarrow \Delta\ni 0$ of $X$ over a smooth curve germ $\Delta$  such that $\X_0=X$, the general fibre is smooth and $K_{\X}$ is $\Q$-Cartier. 
\end{defi}

\begin{lem} \label{Qsmoothable}
 $X_\infty$  is $\Q$-Gorenstein smoothable. 
\end{lem}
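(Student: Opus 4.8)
The plan is to extract an algebraic smoothing directly from the Donaldson-Sun picture of Proposition \ref{Fano limit}. That proposition provides integers $k, N$ and embeddings $X_i \hookrightarrow \P^N$ for $i \in \N \cup \{\infty\}$, given by orthonormal bases of $H^0(X_i, -kK_{X_i})$, under which $X_i$ converges to $X_\infty$ as subvarieties of $\P^N$. All these subschemes share one Hilbert polynomial, so their Hilbert points lie in a single Hilbert scheme $\Hilb$, and the convergence of defining polynomials says exactly that $[X_i] \to [X_\infty]$ in $\Hilb$. Since each $X_i$ with $i$ finite is a smooth Del Pezzo surface, the point $[X_\infty]$ lies in the closure of the open locus $U_{\mathrm{sm}} \subset \Hilb$ parametrizing smooth surfaces.

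I would then choose an irreducible component $Z$ of $\overline{U_{\mathrm{sm}}}$ through $[X_\infty]$; its generic point lies in the dense open $U_{\mathrm{sm}} \cap Z$, so a general member of $Z$ is smooth. Taking a general smooth curve germ $\Delta \to Z$ through $[X_\infty]$ and pulling back the universal family yields a flat $\pi\colon \X \to \Delta$ with $\X_0 = X_\infty$ and smooth general fibre, sitting in $\P^N \times \Delta$, where $\O_\X(1)$ denotes the restriction of $\O_{\P^N}(1)$. It remains to check that $K_\X$ is $\Q$-Cartier, which is the crux of the argument.

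For this, I would first observe that $\X$ is smooth away from $\mathrm{Sing}(X_\infty)$: over $\Delta \setminus \{0\}$ the fibres are smooth, and at any point where $X_\infty$ is smooth the criterion ``if $A/(t)$ is regular and $t$ is a nonzerodivisor then $A$ is regular'' forces $\X$ to be regular. Hence $\mathrm{Sing}(\X) \subseteq \mathrm{Sing}(X_\infty)$, a finite set of codimension three in the threefold $\X$. On the smooth locus $U = \X \setminus \mathrm{Sing}(\X)$ the morphism $\pi$ is smooth, so $\omega_{\X/\Delta}|_U$ is a genuine line bundle with the usual fibrewise restriction. By construction $\O_\X(1)$ restricts to $-kK$ on every fibre (Proposition \ref{Fano limit} for the central fibre, adjunction for the general one), so the line bundle $L = \O_\X(1)|_U \otimes \omega_{\X/\Delta}^{\otimes k}|_U$ is trivial on each fibre. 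Over $\Delta^* = \Delta \setminus \{0\}$ the fibres are complete, so seesaw gives $L$ trivial on $\pi^{-1}(\Delta^*)$; since the remaining boundary divisor $X_\infty \cap U = \{t = 0\} \cap U$ is principal, $L$ is trivial on all of $U$, i.e. $\O_\X(1)|_U \cong \omega_{\X/\Delta}^{\otimes(-k)}|_U$.

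Finally, $\O_\X(1)$ and the reflexive power $\omega_{\X/\Delta}^{[-k]}$ are both reflexive of rank one and agree on $U$, whose complement has codimension three; by Hartogs/reflexive extension they are isomorphic on all of $\X$. Thus $\omega_{\X/\Delta}^{[-k]} \cong \O_\X(1)$ is a line bundle, so $\omega_{\X/\Delta}$, and hence $K_\X = K_{\X/\Delta} + \pi^* K_\Delta$, is $\Q$-Cartier. I expect the main obstacle to be exactly this last step: one must promote the fibrewise anticanonical identity to the total space, and the reason it succeeds is that Proposition \ref{Fano limit} supplies a single integer $k$ together with a genuine line bundle $\O_\X(1)$ on the whole family, with the singularities confined to codimension three so that the reflexive comparison closes up.
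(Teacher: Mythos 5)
Your overall route is the same as the paper's: pass to the Hilbert scheme via Proposition \ref{Fano limit}, pull back the universal family along a curve germ through $[X_\infty]$, and then promote the fibrewise identity $\mathcal{O}(1)\cong -kK$ to the total space to conclude that $-kK_{\X}$ is Cartier. Your treatment of the last step (smoothness of $\X$ away from the finitely many singular points of $X_\infty$, seesaw on the punctured family, and reflexive extension across a codimension-three locus) is in fact more detailed than the paper, which compresses all of this into the single sentence that $-rK_{\X}$ and $\pi_1^*\mathcal{O}(1)$ agree up to a pull-back from the base.

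There is, however, one step whose justification as written does not work: the claim that $\mathcal{O}_{\X}(1)$ restricts to $-kK_{\X_t}$ on the \emph{general} fibre, which you attribute to ``adjunction.'' Adjunction only relates $\omega_{\X_t}$ to $\omega_{\X/\Delta}|_{\X_t}$; it says nothing about why the hyperplane bundle of an arbitrary smooth surface parametrized by your component $Z$ should be anticanonical. A priori a general point of $Z$ is just some smooth surface in $\P^N$ with the right Hilbert polynomial, and since $\mathrm{Pic}$ of a degree $d\le 4$ Del Pezzo surface has rank $10-d>1$, a degree count does not force $\mathcal{O}(1)\cong -kK$. Two things are needed to close this. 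First, your component $Z$ must actually contain (a subsequence of) the anticanonically embedded points $[X_i]$: being an irreducible component of $\overline{U_{\mathrm{sm}}}$ through $[X_\infty]$ does not guarantee this, so choose $Z$ to be a component whose closure contains infinitely many $[X_i]$ (this is the role of the paper's requirement that the curve germ pass through one of the $X_i$ themselves). Second, one propagates the identity from $[X_i]$ to all of $Z\cap U_{\mathrm{sm}}$: since $H^1(\mathcal{O}_{X_s})=0$ for these surfaces, the relative Picard scheme over $Z\cap U_{\mathrm{sm}}$ is unramified, so the locus where $\mathcal{O}(1)|_{X_s}\otimes\omega_{X_s}^{\otimes k}$ is trivial is open and closed in the connected set $Z\cap U_{\mathrm{sm}}$, hence is all of it. With these two additions your argument goes through and matches the paper's.
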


\begin{proof} By Proposition \ref{Fano limit} and general theory we can find a family of varieties  $\pi_2: \X\subset \P^N\times \Delta \rightarrow \Delta$ in $\P^N$ where for $t\neq 0$  $\X_t$ is smooth and $\X_0$ is the variety $X_\infty$. 
Indeed, for a morphism from $\Delta$ to the Hilbert scheme which sends 
$0$ to $X_\infty$ (embedded by $|-kK_{X_\infty}|$) and contains 
$X_i$ (embedded by $|-kK_{X_i}|$ as well) for one sufficiently large $i$, we can construct the required family by pulling back 
the total space and take its normalization if necessary.
Denote the other projection map by $\pi_1\colon \P^N\times \Delta\rightarrow \P^N$, then $-rK_{\X}$ and $\pi_1^*\O(1)$ agrees up to a pull back from the base. Thus $-rK_{\X}$ is Cartier and so $X_\infty$ is $\Q$-Gorenstein smoothable. 
\end{proof}

Note that the above proof  does not use the $\mathbb{Q}$-Gorenstein property of the normal central fiber, 
although in our case, we knew it by Proposition \ref{Fano limit}. We only need a relatively ample linear line bundle and the normality assumption 
of the total space and the central fiber.  
The definition of $\Q$-Gorenstein smoothability can be obviously defined also for local singularities, and for a $\Q$-Gorenstein smoothable $\Q$-Fano manifold, all its singularities must also be $\Q$-Gorenstein smoothable.  On the other hand, it is proved in \cite{HP} that a log Del Pezzo surface with $\Q$-Gorenstein smoothable singularities is $\Q$-Gorenstein smoothable. In dimension two,  $\Q$-Gorenstein smoothable quotient singularities are also commonly called \emph{``T-singularities"}. The classification  of $T$-singularities is well-known, see \cite{KS}, \cite{Ma} for example. 
 So combining the above discussions we obtain:
 
\begin{thm}[\cite{Tian1}] \label{T-singularity}
 The Gromov-Hausdorff limit $(X_\infty, J_\infty)$ of  a sequence of K\"ahler-Einstein Del Pezzo surfaces is a K\"ahler-Einstein log Del Pezzo surface with singularities either  canonical (i.e. ADE singularities) or  cyclic quotients of type $\frac{1}{dn^2}(1,dna-1)$ with $(a,n)=1$ ($1\leq a < n$). 
\end{thm}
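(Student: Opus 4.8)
The statement bundles together everything established just above, so my job is to thread those results together. First I would invoke Proposition~\ref{orbifold compactness} (or, more structurally, Proposition~\ref{Fano limit}), which tells me that the Gromov-Hausdorff limit $(X_\infty,J_\infty)$ of a sequence of K\"ahler-Einstein Del Pezzo surfaces is a K\"ahler-Einstein log Del Pezzo surface of the same degree $d$, whose singularities are quotient singularities by Kawamata's theorem on two-dimensional log terminal singularities. Next I would apply Lemma~\ref{Qsmoothable}, which says $X_\infty$ is $\Q$-Gorenstein smoothable; as remarked after that lemma, this forces every local singularity of $X_\infty$ to be itself $\Q$-Gorenstein smoothable, i.e.\ a $T$-singularity in the surface terminology.

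**The classification step.** The heart of the argument is then purely algebraic: I would appeal to the known classification of $T$-singularities (the references \cite{KS}, \cite{Ma} cited in the excerpt). The standard statement is that a two-dimensional $\Q$-Gorenstein smoothable quotient singularity is either a rational double point (an ADE, equivalently canonical, singularity) or a cyclic quotient singularity of type $\frac{1}{dn^2}(1,dna-1)$ with $(a,n)=1$ and $1\le a<n$. Matching this classification verbatim against the possible singularities of $X_\infty$ yields exactly the two alternatives in the theorem. So the proof is essentially a one-line citation once the smoothability input is in place: combine Proposition~\ref{orbifold compactness}, Lemma~\ref{Qsmoothable}, and the $T$-singularity classification.

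**Where the real work sits.** I expect the genuine obstacle is not in this bookkeeping but in the two inputs it rests on. The differential-geometric content—that the limit is a normal K\"ahler-Einstein log Del Pezzo of the \emph{same} degree with log terminal (hence quotient) singularities—is the substance of Propositions~\ref{orbifold compactness} and~\ref{Fano limit}, and is assumed here. The algebro-geometric content—that smoothability of the total space propagates to smoothability of each local singularity, together with the \cite{HP} converse—is what makes ``$\Q$-Gorenstein smoothable $\Q$-Fano'' equivalent to ``all singularities are $T$-singularities.'' Granting both, the theorem follows formally. Were I to worry about rigor, the one point I would double-check is that the normality of the total space in the construction of Lemma~\ref{Qsmoothable} genuinely guarantees the \emph{local} $\Q$-Gorenstein smoothing (rather than merely a smoothing of the central fiber as an abstract variety), since it is the $\Q$-Gorenstein, not just the smoothing, condition that pins down the cyclic quotient type.
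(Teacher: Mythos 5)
Your proposal is correct and follows the paper's argument exactly: the paper likewise combines Proposition~\ref{orbifold compactness}/\ref{Fano limit}, Lemma~\ref{Qsmoothable}, the observation that global $\Q$-Gorenstein smoothability localizes to each singularity (making them $T$-singularities), and the classification of $T$-singularities from \cite{KS}, \cite{Ma}. Your closing caveat is handled in the paper by noting that $K_{\X}$ being $\Q$-Cartier is a local condition on the total space, so the global family restricts to a local $\Q$-Gorenstein smoothing at each singular point.
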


\begin{rmk}
For sake of completeness, even if we will not use it in our proof, we should remark that it is known  that  local smoothings of $T$-singularities admit asymptotically conical Calabi-Yau metrics \cite{Kr} \cite{Suvaina}. It is then natural to expect from a metric perspective the following picture: given a sequence $(X_i,\omega_i)$ of degree $d$ K\"ahler-Einstein Del Pezzo surfaces  Gromov-Hausdorff converging  to a singular $(X_\infty,\omega_\infty)$ and choose $p_\infty \in \mbox{Sing}(X_\infty)$, then there exists a sequence of points $p_i \in X_i \rightarrow p_\infty  \in  X_\infty $ and scaling parameters $\lambda_i \rightarrow +\infty$ such that $(X_i,p_i, \lambda_i \omega_i)$ converges in the pointed Gromov-Hausdorff sense to an asymptotically conical Calabi-Yau metric on a smoothing of the $T$-singularity at $p_\infty$. 
\end{rmk}

Next, we can use the Bishop-Gromov volume comparison Theorem to control the order of the orbifold group at each point. 

\begin{thm} [\cite{Tian1}]
\label{Bishop-Gromov} Let $(X, \omega)$ be a K\"ahler-Einstein log Del Pezzo surface and let $\Gamma_p \subseteq U(2)$ be the orbifold group at a point $p\in X$. Then
\begin{equation} \label{order bound}
|\Gamma_p| \deg(X) < 12,
\end{equation}
\end{thm}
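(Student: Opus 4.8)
The plan is to derive the bound directly from the Bishop--Gromov volume comparison applied to the underlying Riemannian orbifold, keeping careful track of the orbifold density at the point $p$. First I would fix the normalization $\mathrm{Ric}(\omega)=\omega$, so that the associated Riemannian metric $g$ satisfies $\mathrm{Ric}_g=g$; since the real dimension is $n=4$, this reads $\mathrm{Ric}_g\geq (n-1)k\,g$ with $k=\tfrac13$. By Proposition \ref{orbifold compactness} and the orbifold regularity underlying it, $(X,g)$ is a smooth Riemannian orbifold, so the Bishop--Gromov monotonicity is available in the orbifold sense.

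The core step is the monotonicity of $r\mapsto \mathrm{Vol}(B_r(p))/V_k(r)$, where $V_k(r)$ denotes the volume of an $r$-ball in the simply connected space form of curvature $k$. As $r\to 0$, working in an orbifold chart shows $\mathrm{Vol}(B_r(p))\sim \tfrac{1}{|\Gamma_p|}\cdot(\text{Euclidean ball volume})$, so the ratio tends to $1/|\Gamma_p|$. Combining this with Myers' theorem, which gives $\mathrm{diam}(X)\leq \pi/\sqrt{k}$, and choosing $R=\mathrm{diam}(X)$ so that $B_R(p)=X$, I obtain
$$\frac{1}{|\Gamma_p|}\;\geq\;\frac{\mathrm{Vol}(X)}{V_k(R)}\;\geq\;\frac{\mathrm{Vol}(X)}{\mathrm{Vol}(S^4_k)},$$
where the last step uses $V_k(R)\leq \mathrm{Vol}(S^4_k)$, the total volume of the round $4$-sphere of curvature $k$.

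It then remains to compute both volumes. From $[\omega]=2\pi c_1(X)$ one finds $\mathrm{Vol}(X)=\int_X \tfrac{\omega^2}{2}=2\pi^2\deg(X)$, while the sphere of curvature $k=\tfrac13$ (radius $\sqrt3$) has volume $\mathrm{Vol}(S^4_k)=\tfrac{8\pi^2}{3}\cdot 9=24\pi^2$. Substituting yields $|\Gamma_p|\deg(X)\leq 12$. To upgrade this to the strict inequality \eqref{order bound}, I would invoke the rigidity in Bishop--Gromov: equality would force $X$ to be isometric to a spherical space form $S^4/\Gamma$, which is impossible since $S^4$ admits no K\"ahler structure; hence the inequality is strict.

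The main obstacle is making the orbifold comparison geometry rigorous. One must confirm that the weak K\"ahler--Einstein metric is a genuine \emph{smooth} orbifold metric near each singular point, so that Bishop--Gromov and Myers apply and the small-radius density is exactly $1/|\Gamma_p|$, and then handle the equality case carefully in order to obtain the strict inequality rather than merely ``$\leq$''.
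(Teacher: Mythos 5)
Your argument is correct and follows essentially the same route as the paper's proof: orbifold Bishop--Gromov monotonicity with small-radius density $1/|\Gamma_p|$, the volume computation $\mathrm{Vol}(X)=\int_X\omega^2/2$ from $[\omega]$ being a multiple of $c_1(X)$, comparison with the total volume of the model sphere, and a rigidity argument to upgrade $\leq 12$ to $<12$; your normalization $\mathrm{Ric}(\omega)=\omega$ versus the paper's $\mathrm{Ric}(\omega)=3\omega$ merely rescales the comparison sphere and gives the identical bound. The one place the paper is slightly tighter is the equality case: instead of appealing to ``$S^4$ admits no K\"ahler structure'' (which requires an extra word when $\Gamma$ is nontrivial, since the K\"ahler structure lives on the quotient rather than on $S^4$ itself), it rules out constant curvature directly via the pointwise K\"ahler identity $S(\omega)^2=24|W^+|^2$, which forces the scalar curvature to vanish and contradicts positivity.
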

\begin{proof}
Without loss of generality we may normalize the metric so that $Ric (\omega)= 3\omega$. The Bishop-Gromov volume comparison extends without difficulty to  orbifolds \cite{Bor}, so for all $p \in X$ the function
$\frac{Vol(B(p,r))}{Vol(\overline{B}(r))}$
is decreasing in $r$,  where $\overline{B}(r)$ is the ball of radius $r$ in the standard four sphere $S^4(1)$. As $r$ tends to zero the function converges to $1/|\Gamma_p|$, and for sufficiently large $r$ the function is constant $Vol(X, \omega)/Vol(S^4(1))$. 
So $Vol(X, \omega) |\Gamma_p| \leq Vol (S^4(1)).$
 The normalization condition $Ric (g)= 3 g$ implies that $[\omega] = \frac{2 \pi}{3} c_1(X)$.
 So
$Vol(X, \omega)=\int_X \frac{\omega^2}{2}=\frac{2 \pi^2}{9} \deg(X)$. 
Then, using the fact that $Vol(S^4(1))=\frac{8}{3} \pi^2$,  it is easy to see 
$|\Gamma_p| \deg(X) \leq 12.$ 
If the equality is achieved, then $X$ must have constant curvature. 
But since $X$ is K\"ahler, we have $S(\omega)^2=24|W^+|^2$, so the scalar curvature vanishes. 
Contradiction. 
\end{proof}

\begin{rmk} \label{remark BG}
The two theorems above were essentially known to Tian \cite{Tian1}. For the inequality \ref{order bound}, the constant on the right hand side was $48$ in \cite{Tian1}. 
\end{rmk}

By Theorem \ref{T-singularity} and \ref{Bishop-Gromov}, and we have the constraints on the possible singularities that could appear on the Gromov-Hausdorff limit $X_\infty$\footnote{Recall that the order of the finite Klein group yielding an $A_k$ singularity is $k+1$, a $D_k$ singularity is $4(k-2)$, an $E_6$ singularity is $24$, an $E_7$ singularity is  $48$, and an $E_8$ singularity is  $120$.}

\begin{itemize}
\item $\deg=4$, $X_\infty$ is canonical, and can have only $A_1$ singularities.
\item $\deg=3$, $X_ \infty$ is canonical, and can have only $A_1$ or $A_2$ singularities. 
\item $\deg=2$, $X_\infty$ can have only $A_1$, $A_2$, $A_3$, $A_4$, and $\frac{1}{4}(1,1)$ singularities. 
\item $\deg=1$, $X_\infty$ can have only $\frac{1}{4}(1,1)$, $\frac{1}{8}(1,3)$, and $\frac{1}{9}(1,2)$ singularities besides $A_i\ (i\leq 10)$ and $D_4$ singularities.
\end{itemize}
For the case when $d\geq 3$, the above classification is already sufficient for our purposes, as  canonical log Del Pezzo surfaces are  classified (see the next section). When $d\leq 2$ we will make a further study in Section  \ref{Degree12}. 
Now we make a side remark about the Gromov-Hausdorff topology used in this paper. In \cite{Spotti} it is proved that if two K\"ahler-Einstein log Del Pezzo surfaces are isometric, then the complex structures could be the same or conjugate. For this reason the standard Gromov-Hausdorff distance can not distinguish two conjugate complex structures in general. So in our case there is an easy modification, where we say a sequence $(X_i, J_i, \omega_i)$ converges to $(X_\infty, J_\infty, \omega_\infty)$ if  it converges in the Gromov-Hausdorff topology and in the sense of Anderson-Tian, i.e. smooth convergence of both the metric and complex structure away from the singularities. The spaces $M_d$ appearing in Theorem \ref{MT} admit an involution given by conjugating the complex structure, and we will identify explicitly this involution for each $d$.

\section{Algebro-geometric properties of log Del Pezzo surfaces}\label{AG input}

 We continue to study the algebro-geometric properties of $X_\infty$ appearing in the last section. These constraints help the construction of  the desired moduli spaces in later sections. 

\subsection{Classification of mildly singular log Del Pezzo surfaces}
We first recall some general classification results for log Del Pezzo surfaces with mild singularities.  The following is classical.

\begin{thm}[\cite{HW}] \label{logDelPezzo-canonical}
A degree $d$ log Del Pezzo surface with canonical singularities is 
\begin{itemize}
\item a complete intersection of two quadrics in $\P^4$, if $d=4$;
\item  a cubic hypersurface in $\P^3$, if $d=3$;
\item a degree $4$ hypersurface in $\P(1,1,1,2)$ not passing  $[0:0:0:1]$, if $d=2$;
\item a degree $6$ hypersurface in $\P(1,1,2,3)$ not passing $[0:0:1:0]$ and $[0:0:0:1]$, if $d=1$.
\end{itemize}
\end{thm}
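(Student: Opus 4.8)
The plan is to reduce the statement to an analysis of the anti-canonical graded ring $R = \bigoplus_{m\geq 0} H^0(X,-mK_X)$, whose $\mathrm{Proj}$ recovers $X$ because $-K_X$ is ample. Since the singularities are canonical, hence rational double points, $K_X$ is Cartier and the minimal resolution $\pi\colon \tX\to X$ is crepant, i.e. $K_{\tX}=\pi^*K_X$; thus $-mK_{\tX}=\pi^*(-mK_X)$ is nef and big for every $m\geq 1$. First I would establish the vanishing $H^i(X,-mK_X)=0$ for all $i>0$ and $m\geq 0$. This follows by combining the rationality of the singularities (so that $R^j\pi_*\O_{\tX}=0$ for $j>0$, whence $H^i(X,-mK_X)=H^i(\tX,-mK_{\tX})$) with Kawamata--Viehweg vanishing on the smooth surface $\tX$ applied to $-mK_{\tX}=K_{\tX}+\bigl(-(m+1)K_{\tX}\bigr)$. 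In particular $X$ is rational with $\chi(\O_X)=1$.

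Next I would compute the Hilbert function. Since rational double points contribute no correction terms, Riemann--Roch gives
\begin{equation*}
h^0(X,-mK_X)=\chi(\O_X)+\tfrac12(-mK_X)\cdot(-mK_X-K_X)=1+\tfrac{m(m+1)}{2}\,d.
\end{equation*}
Specializing, $h^0(-K_X)=d+1$, while $h^0(-2K_X),h^0(-3K_X),\dots$ are determined in each degree, and these dimensions dictate the shape of $R$.

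For $d\geq 3$ the strategy is to show that $-K_X$ is very ample and that $X$ is projectively normal, so that $R$ is generated in degree one and $X\subset\P^{d}$ is cut out by $R$. Comparing $h^0(-mK_X)$ with the dimension of $\mathrm{Sym}^m$ of the $(d+1)$-dimensional degree-one piece then pins down the relations: for $d=3$ one finds no quadric and a single cubic relation, exhibiting $X$ as a cubic in $\P^3$; for $d=4$ one finds exactly two quadric relations, and $X$, being a nondegenerate degree-$4$ surface in $\P^4$, is their complete intersection. For $d\leq 2$ the map $|-K_X|$ is no longer an embedding, and the point is to locate the extra generators. For $d=2$ a new generator $w$ of weight $2$ appears (from $h^0(-2K_X)=7>6=\dim\mathrm{Sym}^2$), and a single weight-$4$ relation of the form $w^2=f_4(x_0,x_1,x_2)+w\,f_2(x_0,x_1,x_2)$ realizes $X$ in $\P(1,1,1,2)$; nonvanishing of the coefficient of $w^2$ is precisely the condition that $X$ avoid $[0:0:0:1]$. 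For $d=1$ one gets additional generators of weights $2$ and $3$ and a single weight-$6$ relation, placing $X$ in $\P(1,1,2,3)$, with the nonvanishing of the $z^2$ and $y^3$ coefficients encoding the avoidance of $[0:0:0:1]$ and $[0:0:1:0]$.

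I expect the main obstacle to be the generation-and-relations step rather than the numerics. Establishing very ampleness and projective normality for $d\geq 3$, and more delicately the exact list of generators together with the claim of a single hypersurface relation of the stated weighted degree for $d\leq 2$, requires controlling the multiplication maps $\mathrm{Sym}^{\bullet}R_1\to R_{\bullet}$, equivalently the surjectivity of $H^0(-aK_X)\otimes H^0(-bK_X)\to H^0(-(a+b)K_X)$ in the relevant range. These surjectivities, and the identification of the weighted ambient space, are where the argument does real work; I would handle them by repeated use of the vanishing above together with a hyperplane-section and short-exact-sequence induction, and would verify the conditions on the top-weight coefficients by checking that the resulting hypersurface is quasismooth and normal along the coordinate strata, so that the anti-canonical map is an isomorphism onto its image.
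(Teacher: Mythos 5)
The paper gives no proof of this statement: it is quoted as classical and attributed to \cite{HW}, where it is established by exactly the anticanonical-ring analysis you outline (vanishing, Riemann--Roch for $h^0(-mK_X)=1+\tfrac{m(m+1)}{2}d$, and control of the multiplication maps $H^0(-aK_X)\otimes H^0(-bK_X)\to H^0(-(a+b)K_X)$ via restriction to anticanonical curves), leading to the hypersurface or complete-intersection presentations in $\P^4$, $\P^3$, $\P(1,1,1,2)$ and $\P(1,1,2,3)$. Your proposal is correct and follows essentially the same route as the cited source, and it rightly identifies the genuinely hard step --- very ampleness and projective normality for $d\geq 3$, and the generator--relation count for $d\leq 2$ --- rather than eliding it.
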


Although we will not use it, log Del Pezzo surfaces with Gorenstein index two are also classified, by  \cite{AN} and \cite{Nakayama}. In case the degree is one or two, we have:

\begin{thm}[\cite{KK}] \label{logDelPezzo-index2}
A degree $2$ log Del Pezzo surface with Gorenstein index at most two is either a degree $4$ hypersurface in $\P(1,1,1,2)$, or a degree $8$ hypersurface in $\P(1,1,4,4)$. 
A degree $1$ log Del Pezzo surface with Gorenstein index at most two is a degree $6$ hypersurface in $\P(1,1,2,3)$. 
\end{thm}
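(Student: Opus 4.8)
The plan is to realize $X$ as its own anticanonical model and to read the weighted-projective embedding off the anticanonical graded ring. Since the Gorenstein index is at most two, $-2K_X$ is an ample Cartier divisor, so $X$ is projective and $X \cong \mathrm{Proj}\, R$, where
\[
R \;=\; \bigoplus_{m\geq 0} H^0\!\left(X,\, \O_X(-mK_X)\right)
\]
is the anticanonical ring and $\O_X(-mK_X)$ denotes the reflexive sheaf attached to the Weil divisor $-mK_X$. The whole problem then reduces to understanding $R$: once I know in which degrees $R$ acquires generators and that it is a hypersurface ring, the surface is pinned down as a weighted-projective hypersurface of the asserted degree.

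The key computation is the Hilbert series $P(t)=\sum_{m\geq 0}\dim H^0(X,\O_X(-mK_X))\,t^m$. Because $X$ is log terminal and $-(m{+}1)K_X$ is ample $\Q$-Cartier, Kawamata--Viehweg vanishing yields $H^i(X,\O_X(-mK_X))=0$ for $i>0$ and $m\geq 1$, so $\dim H^0=\chi$ in each positive degree. I would then apply the orbifold (Reid) Riemann--Roch formula on the surface with quotient singularities,
\[
\chi\!\left(\O_X(-mK_X)\right) \;=\; \chi(\O_X) \;+\; \tfrac12(-mK_X)\cdot(-mK_X-K_X) \;+\; \sum_{Q\in \mathrm{Sing}(X)} c_Q(m),
\]
where $\chi(\O_X)=1$ since $X$ is rational, the intersection term equals $\tfrac12 m(m{+}1)\deg(X)$, and each $c_Q(m)$ is a local Dedekind-sum correction depending only on the type of the quotient singularity $Q$ and on $m \bmod r$.

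Next I would enumerate the singularity configurations compatible with index at most two in each degree. Index one means canonical singularities and recovers the Gorenstein cases of Theorem \ref{logDelPezzo-canonical}; index exactly two forces at least one non-Gorenstein quotient point of local index two (for surfaces these are explicit, e.g. $\tfrac14(1,1)$-type points), and the requirement that the $c_Q(m)$ assemble into non-negative integers, together with the smallness of $\deg(X)\in\{1,2\}$, severely restricts the options. For each admissible configuration I compute $P(t)$ explicitly and match it against the Hilbert series $\dfrac{1-t^{e}}{\prod_i (1-t^{a_i})}$ of a degree-$e$ hypersurface in $\P(a_0,\dots,a_3)$; this is precisely what forces the weights $(1,1,1,2)$ and $(1,1,4,4)$ with equation degrees $4$ and $8$ when $d=2$, and $(1,1,2,3)$ with equation degree $6$ when $d=1$ (the same ambient space in both index cases, as the combinatorial Hilbert series does not see whether the hypersurface meets the orbifold points).

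Finally, having fixed the generator degrees, I would pick homogeneous generators of $R$ in those degrees, obtain a surjection from the corresponding weighted polynomial ring onto $R$, and use the matched Hilbert series together with the normality of $R$ (equivalently of $X$) to conclude that the kernel is principal and generated in degree $e$; this exhibits $X$ as the claimed hypersurface, while a direct check gives the converse that a quasismooth such hypersurface is a log Del Pezzo surface of the stated degree and index. I expect the main obstacle to be the bookkeeping in the two middle steps: correctly evaluating the local corrections $c_Q(m)$ at the index-two points and ruling out every spurious singularity configuration, so that the Hilbert series are forced to coincide with that of a \emph{single} weighted hypersurface rather than of a more complicated (non-hypersurface) graded ring.
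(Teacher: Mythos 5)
First, a point of order: the paper does not prove this statement at all --- it is quoted from Kapustka--Kapustka \cite{KK}, which in turn rests on the classification of index-two log Del Pezzo surfaces due to Alexeev--Nikulin \cite{AN} and Nakayama \cite{Nakayama}. Indeed, in Section \ref{Degree12} the authors deliberately avoid using this theorem in full strength and re-derive only the special cases they need (Gromov--Hausdorff limits) by a different route: the limit carries a holomorphic involution inherited from the Geiser/Bertini involution, the quotient by it is a $\Q$-Gorenstein degeneration of $\P^2$ or $\P(1,1,2)$, and the Manetti/Hacking--Prokhorov classification of such degenerations (\cite{Ma}, \cite{HP1}) pins down that quotient, after which the hypersurface presentation falls out of the double-cover structure. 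So there is no in-paper proof to match yours against; I can only assess your plan on its own terms.

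As a proof of the cited theorem, your sketch has a genuine gap at the decisive step. Matching the Hilbert series $P(t)$ of the anticanonical ring $R$ against $(1-t^{e})/\prod_i(1-t^{a_i})$ does \emph{not} show that $R$ is generated in degrees $a_0,\dots,a_3$ with a single relation in degree $e$: a graded ring requiring more generators and more relations can have the same Hilbert series, since the extra generators and syzygies cancel numerically. The surjection from the weighted polynomial ring that you ``obtain'' in your last step is precisely the assertion that the multiplication maps $\bigoplus_i R_{a_i}\otimes R_{m-a_i}\to R_m$ are surjective for all $m$, and this needs a separate geometric argument (base-point-freeness and degree statements for $|-K_X|$ and $|-2K_X|$, i.e. identifying the image of the bi-anticanonical map), which is where the actual content of \cite{KK} lies. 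Once generation is established, your conclusion is fine: the weighted polynomial ring is a UFD, so the height-one prime kernel is principal and its degree is read off from $P(t)$. A second, smaller but still real, issue is that ``enumerate the singularity configurations compatible with index at most two'' is not mere bookkeeping: index-two log terminal surface singularities form an infinite list as abstract germs (all $\frac{1}{4n}(1,2n-1)$, plus non-cyclic types), and determining which configurations actually occur on a degree-one or degree-two log Del Pezzo surface is essentially the Alexeev--Nikulin/Nakayama classification itself. You would either have to import that classification --- at which point the graded-ring computation is the easy part --- or supply an a priori bound on the configurations, e.g. via Noether's formula as the paper does in the smoothable setting.
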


Notice that by the restrictions on Gromov-Hausdorff limits of K\"ahler-Einstein Del Pezzo surfaces discussed in the previous section, we know that the Gorenstein index of such limits is less then or equal to $2$ for degree $\geq$ 2, and at most $6$ in the degree $1$ case.

\subsection{CM line bundle comparison}

In this subsection we study  GIT stability of  K\"ahler-Einstein log Del Pezzo surfaces. For smooth K\"ahler-Einstein manifolds, it is known that they are K-polystable (cf. \cite{Tian3, Stoppa, Mab}). This has been generalized to the singular setting in \cite{Berman}, and we state the two dimensional case here:

\begin{thm}[\cite{Berman}]
\label{KEtoKstability} A log Del Pezzo surface admitting a K\"ahler-Einstein metric is K-polystable. 
\end{thm}
  
  Next we state a general theorem relating K-polystability and usual GIT stabilities, using the CM line bundle of Paul-Tian \cite{PT}. 
Recall that the CM line bundle is a line bundle defined on base scheme of each flat family of polarized varieties in terms of the Deligne pairing  and if the family is $G$-equivariant with an algebraic group $G$, the line bundle naturally inherits the group action. 
It gives a GIT weight interpretation to the Donaldson-Futaki invariant whose positivity is roughly the K-stability. 
A point is that the CM line bundle is \textit{not} even nef in general so that we cannot apply GIT straightforward. 
We refere to \cite{PT}, \cite{PRS} for more details. 
  
\begin{thm}\label{CM stability} Let $G$ be a reductive algebraic group without nontrivial characters.  Let $\pi\colon (\mathcal{X},\mathcal{L})\rightarrow S$ be a $G$-equivariant 
polarized projective flat family of equidimensional varieties over a projective 
scheme.
Here  ``polarized" means that $\mathcal{L}$ is a 
relatively ample line bundle on $\mathcal{X}$, and ``equidimensional" means that all the 
irreducible components have the same dimension. 
Suppose that 
\begin{enumerate}
\item the Picard rank $\rho(S)$ is one; 
\item there is at least one K-polystable $(\mathcal{X}_t,\mathcal{L}_t)$ 
which degenerates in $S$ via a one parameter subgroup $\lambda$ in $G$, i.e. the corresponding test configuration is not 
of product type. 
\end{enumerate}
Then a point $s\in S$ is GIT (poly, semi)stable if $(\X_s,\mathcal{L}_s)$ 
is  K-(poly, semi)stable. 
\end{thm}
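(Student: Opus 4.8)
The strategy is to run Mumford's numerical criterion on $S$ with respect to the CM line bundle $\lambda_{\mathrm{CM}}$, exploiting the fact that its Hilbert--Mumford weights are Donaldson--Futaki invariants. By the construction of Paul--Tian \cite{PT, PRS}, for every one parameter subgroup $\lambda$ of $G$ and every $s\in S$ the weight $\mu^{\lambda_{\mathrm{CM}}}(s,\lambda)$ of the $\mathbb{G}_m$-action on the fibre of $\lambda_{\mathrm{CM}}$ over $s_0=\lim_{\tau\to 0}\lambda(\tau)\cdot s$ equals, up to a fixed positive multiple, the Donaldson--Futaki invariant $DF(\mathcal{X}_s,\lambda)$ of the test configuration produced by degenerating $(\mathcal{X}_s,\mathcal{L}_s)$ along $\lambda$. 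Since $G$ has no nontrivial characters, the $G$-linearization of $\lambda_{\mathrm{CM}}$ is unique and these weights are unambiguous. Observe that the one parameter subgroups of $G$ realize only part of the test configurations of a given fibre, so the sign conditions on $\mu^{\lambda_{\mathrm{CM}}}(s,\cdot)$ are implied by, but a priori weaker than, K-stability of $(\mathcal{X}_s,\mathcal{L}_s)$; this is precisely why the implication runs from K-stability to GIT stability, and not conversely.

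The main point, where both hypotheses enter, is to show that $\lambda_{\mathrm{CM}}$ is ample; only then is GIT with respect to it defined, and, because $\rho(S)=1$, the resulting stable locus does not depend on the choice of ample polarization. Since $\rho(S)=1$, the class of $\lambda_{\mathrm{CM}}$ is a rational multiple $c\,H$ of the ample generator $H$ of $\mathrm{NS}(S)_{\mathbb{Q}}$, and it suffices to prove $c>0$. I would test this against the rational curve $C=\overline{\lambda(\mathbb{G}_m)\cdot t}\subset S$ swept out by the K-polystable fibre $(\mathcal{X}_t,\mathcal{L}_t)$ of hypothesis (2). Localizing the degree of $\lambda_{\mathrm{CM}}$ at the two fixed points of $C$ expresses $\lambda_{\mathrm{CM}}\cdot C$ as a positive multiple of $DF(\mathcal{X}_t,\lambda)+DF(\mathcal{X}_t,\lambda^{-1})$. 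Here K-polystability gives $DF(\mathcal{X}_t,\lambda^{-1})\geq 0$, while the degeneration along $\lambda$ being non-product forces $DF(\mathcal{X}_t,\lambda)>0$, since for a K-polystable fibre the invariant vanishes only on product configurations. Hence $\lambda_{\mathrm{CM}}\cdot C>0$; as $H\cdot C>0$ and $\rho(S)=1$, this yields $c>0$ and the ampleness of $\lambda_{\mathrm{CM}}$.

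With $\lambda_{\mathrm{CM}}$ ample the Hilbert--Mumford criterion applies verbatim, so $s$ is GIT semistable (resp. stable) precisely when $\mu^{\lambda_{\mathrm{CM}}}(s,\lambda)\geq 0$ for all $\lambda$ (resp. $>0$ for all nontrivial $\lambda$). If $(\mathcal{X}_s,\mathcal{L}_s)$ is K-semistable and reduced, then $DF(\mathcal{X}_s,\lambda)\geq 0$ for every test configuration, in particular for every embedded one arising from a $\lambda$ in $G$, and the weight identity gives GIT semistability. For the stable case, K-stability forces the automorphism group, hence the $G$-stabilizer of $s$, to be finite, so every nontrivial $\lambda$ moves $s$ and yields a nontrivial test configuration with $DF>0$, whence $\mu^{\lambda_{\mathrm{CM}}}(s,\lambda)>0$ and $s$ is GIT stable. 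The reducedness hypothesis is used to guarantee that the $\lambda$-degeneration of $s$ is a genuine test configuration of the variety $\mathcal{X}_s$, so that the identification of weights with Donaldson--Futaki invariants is valid.

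Finally, for the polystable case I would check that the orbit $G\cdot s$ is closed inside the semistable locus. If some nontrivial $\lambda$ satisfied $\mu^{\lambda_{\mathrm{CM}}}(s,\lambda)=0$ with $s_0=\lim_{\tau\to0}\lambda(\tau)\cdot s\notin G\cdot s$, then $DF(\mathcal{X}_s,\lambda)=0$, so by K-polystability the test configuration would be a product and its central fibre $\mathcal{X}_{s_0}$ would be isomorphic to $\mathcal{X}_s$ as a polarized variety; since in our families isomorphic reduced fibres lie in a single $G$-orbit, this forces $s_0\in G\cdot s$, a contradiction. Hence the orbit is closed and $s$ is GIT polystable. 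I expect the ampleness step of the second paragraph to be the genuine obstacle, the remaining arguments being a careful bookkeeping of signs in the numerical criterion.
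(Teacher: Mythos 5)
Your proposal is correct and follows essentially the same route as the paper: prove ampleness of the CM line bundle by combining $\rho(S)=1$ with the fact (due to Wang, \cite{Wan}) that its degree on the closure of the $\lambda$-orbit through the K-polystable fibre is a positive multiple of $DF(\mathcal{X}_t,\lambda)+DF(\mathcal{X}_t,\lambda^{-1})$, then apply the Hilbert--Mumford criterion using the Paul--Tian identification of weights with Donaldson--Futaki invariants. The only point treated more carefully in the paper is the role of reducedness and equidimensionality, which is there to exclude \emph{almost trivial} test configurations (in the sense of \cite{LX}, \cite{Od3}) arising from one-parameter subgroups of $G$, slightly more than your ``genuine test configuration'' gloss.
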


\begin{proof}
Let $\Lambda_{CM}$ be the CM line bundle \cite{PT} over $S$  associated to $\pi$. 
In general, this is a $G$-linearized $\mathbb{Q}$-line bundle. Let $\Lambda_0$ be the positive generator of $Pic(S)$, then  there exists integers $r>0$ and $k$, so that 
$\Lambda_{CM}^{\otimes r}\cong \Lambda_0^{\otimes k}$. The isomorphism is $G$-equivariant by the condition that $G$ has no nontrivial character. 
On the other hand, from the condition (2), we know that the degree of CM line along the  closure of the $\lambda$-orbit is positive.  This is because  by \cite{Wan} 
the degree is the sum of the Donaldson-Futaki invariant on the two degenerations along $\lambda$ and $\lambda^{-1}$. 
This implies that the integer $k$ is positive. 
Therefore,  $\Lambda_{CM}^{\otimes r}$ is ample. 

If $\pi\colon \mathcal{X}\rightarrow S$ is
the universal polarized family over a Hilbert scheme, and $G$ is the associated special linear group $SL$, 
then it is known \cite{PT} that for any $s\in S$ and one parameter subgroup
$\lambda\colon \mathbb{C}^{*}\rightarrow G$, the associated Donaldson-Futaki
invariant \cite{Do1} $DF((\mathcal{X}_s, \mathcal{L}_s); \lambda)$ is
the GIT weight in the usual sense with respect to the CM line bundle $\Lambda_{CM}^{\otimes r}$, up to a positive multiple. 
This fact can be extended to our general family $\pi\colon (\mathcal{X},\mathcal{L})
\rightarrow S$ in a straightforward way by considering $G$-equivariant morphism into a certain Hilbert scheme 
defined by $(\mathcal{X},(\pi^{*}\Lambda_0)^{\otimes l}\otimes
\mathcal{L}^{\otimes m})$
for $l\gg m\gg 0$. 
If $\mathcal{X}_s$ is reduced, from our equidimensionality assumption 
on all fibers, we can not get \textit{almost trivial} test configurations from one parameter subgroup of $G$ (in the sense of \cite{LX}, \cite{Od3}). 
This is because the central fiber of an almost trivial test configuration for a reduced 
equidimensional variety should have an embedded component. 
Summarizing up, the conclusion follows from the Hilbert-Mumford numerical criterion.
\end{proof}

We believe Theorem \ref{CM stability} should have more applications in the explicit study of general cscK metrics beyond our 
study of log Del Pezzo surfaces in this paper. For instance, there are many examples of 
equivariant family of polarized varieties parametrized by a projective space or Grassmanian through various covering constructions. In these situations one can always apply Theorem \ref{CM stability}. 
We  remark that in the above proof what we really need is the CM line bundle to be ample. For example,  the following has been known to Paul-Tian long time ago:
\begin{cor}[{\cite{Tian2.5}}] \label{local versal}
A  hypersurface $X\subseteq \mathbb{P}^N$
is  Chow polystable (resp. Chow semistable) if  $(X,\mathcal{O}_X(1))$ is K-polystable (resp. K-semistable). 
\end{cor}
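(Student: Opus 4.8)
The plan is to derive Corollary \ref{local versal} by running the mechanism of Theorem \ref{CM stability} on the universal family of degree $d$ hypersurfaces. Fix $d$ and let $S = \P(H^0(\P^N, \O_{\P^N}(d)))$ be the projective space of degree $d$ forms on $\P^N$ up to scale. This $S$ is exactly the Chow variety of degree $d$ divisors in $\P^N$: the Chow form of a hypersurface $X = \{F = 0\}$ is, up to scalar, the form $F$ itself. Consequently the Chow (poly/semi)stability of $X$ is, by definition, the GIT (poly/semi)stability of the point $[X] \in S$ with respect to $G = \SL(N+1)$ and its natural linearization $\O_S(1)$. Over $S$ sits the tautological $G$-equivariant family $\pi \colon (\X, \L) \to S$, where $\X \subset \P^N \times S$ is the universal hypersurface and $\L = \O_{\X}(1)$ is the restriction of the hyperplane bundle of $\P^N$. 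I would then check the hypotheses of Theorem \ref{CM stability} for $(\pi, \L, G, S)$ and translate the conclusion into the language of Chow stability.

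The structural hypotheses are immediate: $S$ is a projective space, so $\rho(S) = 1$, giving condition (1); every fibre of $\pi$ is a divisor in $\P^N$, hence pure of dimension $N-1$, so the family is equidimensional; and $\SL(N+1)$ is reductive with no nontrivial characters. As the remark preceding the statement makes explicit, the genuine content is not conditions (1)--(2) per se but rather the ampleness on $S$ of the CM line bundle $\Lambda_{CM}$ produced in the proof of Theorem \ref{CM stability}. This is where I expect the main obstacle to lie: one must show that the integer $k$ in a $G$-equivariant isomorphism $\Lambda_{CM}^{\otimes r} \cong \O_S(1)^{\otimes k}$ is strictly positive, since on $\P^M = S$ a line bundle is ample precisely when it is a positive power of $\O_S(1)$.

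I would establish this positivity by the classical explicit computation of Paul-Tian, which evaluates $\Lambda_{CM}$ for the universal hypersurface family directly and identifies it with a positive multiple of $\O_S(1)$; this is the robust route and is unconditional. Alternatively one can invoke condition (2): pick any K-polystable hypersurface $X_0$, note that its $\SL(N+1)$-orbit is positive dimensional (no hypersurface of positive degree is $\SL(N+1)$-invariant), choose a one parameter subgroup $\lambda$ outside the stabilizer of $[X_0]$ so that both $\lambda$ and $\lambda^{-1}$ induce non-product test configurations, and use that by \cite{Wan} the degree of $\Lambda_{CM}$ along the orbit closure equals $DF(\lambda) + DF(\lambda^{-1})$, which is strictly positive by K-polystability; hence $k > 0$. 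Once $\Lambda_{CM}$ is ample, GIT stability with respect to $\Lambda_{CM}$ agrees with GIT stability with respect to $\O_S(1)$, and the Hilbert-Mumford argument in the proof of Theorem \ref{CM stability} gives that K-(poly/semi)stability of $(X, \L)$ implies GIT (poly/semi)stability of $[X] \in S$, i.e. Chow (poly/semi)stability of $X$. The reducedness hypothesis of Theorem \ref{CM stability} is automatic here, as a non-reduced hypersurface is never K-semistable.
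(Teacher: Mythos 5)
Your proposal matches the paper's intended derivation: the corollary is presented there as a direct consequence of the mechanism of Theorem \ref{CM stability} applied to the universal family of hypersurfaces over $S=\P(H^0(\P^N,\O(d)))$ (where Chow stability is GIT stability of the defining form), with the essential input being the ampleness of the CM line bundle on $S$, which is precisely the Paul--Tian computation you cite as your primary route. The only caveat is that your alternative argument for ampleness via condition (2) would still require ruling out that \emph{both} one-parameter degenerations of the chosen K-polystable hypersurface are product test configurations --- taking $\lambda$ outside the stabilizer does not by itself guarantee this --- but since you rest the proof on the unconditional explicit computation, this does not affect correctness.
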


\noindent

Hence, in particular, combined with \cite[Theorem 1.2]{Od2}, it follows that 
semi-log-canonical hypersurfaces with ample canonical classes and 
log-terminal Calabi-Yau hypersurfaces are GIT stable. 
This is just one of the easiest examples of applications of Theorem \ref{CM stability}.

We also state the following \textit{local} version of Theorem \ref{CM stability}, 
which we also believe to be a fundamental tool for future developments. 

\begin{lem} \label{local CM stability}
Let $S$ be an affine scheme, and $G$ be a reductive algebraic group acting on $S$
 fixing $0\in S$. Let  $\pi\colon (\X, \L)\twoheadrightarrow S$ be a  $G$-equivariant polarized flat projective 
deformation of a K-polystable reduced polarized variety $(\mathcal{X}_0,\mathcal{L}_0)$ 
and suppose all fibers $\X_s$ are  equidimensional varieties. 
We assume that if $(\mathcal{X}_{s_{1}},\mathcal{L}_{s_{1}})$
is isomorphic to $(\mathcal{X}_{s_{2}},\mathcal{L}_{s_{2}})$ then 
$s_{2}\in Gs_{1}$ and that $G$-action on $S$ is faithful. 
Then there is an affine neighborhood $S'$ of $0$ so that
the CM line bundle is equivariantly trivial over $S'$.
For such $S'$, it holds that
a point $s\in S'$ is GIT (poly)stable if  $(\X_s,\mathcal{L}_s)$ is 
K-(poly)stable. \end{lem}

This follows from similar arguments as in the proof of Theorem \ref{CM stability},  and we write a detailed proof here for the convenience of readers. 

\begin{proof}
First, let us prove that the CM line bundle $\lambda_{{\it CM}}$  is locally $G$-equivariant trivial 
around $0\in S$. 
Pick an arbitrary but sufficiently ample $G$-equivariant line bundle $\lambda_{B}$ and put  
$\lambda_{A}:=\lambda_{{\it CM}}\otimes \lambda_{B}$. We can assume $\lambda_{B}$ is also ample. 
By multiplying an appropriate character of $G$ 
to change the $G$-linearisations on $\lambda_{A}$ and $\lambda_{B}$ 
if necessary, we can assume the $G$-action at $\lambda_{A}|_{0}$ and $\lambda_{B}|_{0}$ is trivial. 
It is possible since from our assumption, $G$-action on $\lambda_{{\it CM}}|_{0}$ 
is trivial. 
Embed $S$ via $\lambda_{A}$ into projective space and take a compactification 
simply as 
the Zariski closure $\bar{S}_A$. Then applying the Hilbert-Mumford criterion to $(\bar{S}_A,\lambda_{A})$, 
we can see that there is some $G$-invariant section $s_1$ which does not vanish at $0$. It is indeed the original definition of GIT semi-stability. Note this implies the equivariant triviality of $\lambda_{A}$ at the locus where $s_1$ does not vanish. Do the same for $\lambda_{B}$ and we get $s_2$, a $G$-invariant section of $B$ non-vanishing around $0$. 
Then we let $S'$ be the locus where both $s_1$ and $s_2$ do not vanish. This is of course affine again. Over $S'$, $A$ and $B$ are both equivariantly trivial so is the difference which is exactly our CM line bundle. 

Suppose $s\in S'$ is not polystable in the GIT sense, although 
$(\X_s,\mathcal{L}_s)$ is K-polystable. The non-polystability implies that 
it degenerates to $s'\in (S'\setminus Gs)$ via one parameter subgroup 
$\lambda\colon \mathbb{C}^{*}\rightarrow G$. This gives a non product 
test configuration but its Donaldson-Futaki invariant vanishes, 
due to the equivariant triviality of the CM line bundle over $S'$. 
This contradicts. So we proved the assertion. 
\end{proof}

One can often apply this to the versal deformation family, as we do in the Section \ref{Versal deformations}. We remark that in general if the base $S$ is 
replaced by an open analytic subset of $S$, 
we can also work locally analytically provided $S$ is smooth at $0$. Let $K$ be a maximal compact subgroup of $G$, and $A$ be the tangent space of $S$ at $0$. Since $G$ fixes $0$, it induces a linear $G$ action on $A$. We fix a $K$-invariant Hermitian metric on $A$.  Then by standard slice theory for compact group actions we can find a $K$-invariant analytic neighborhood $U$ of $0$ in $S$,  a ball $B_r(0)$ in $A$, and a $K$-equivariant bi-holomorphic map from $U$ to $B_r(0)$. So we can identify $U$ with $B_r(0)$, in particular, the CM line bundle $\lambda_{CM}$ restricts to $B_r(0)$. Our statement then becomes that a point $s\in B_r(0)$ is GIT (poly)stable if $(\X_s, \mathcal{L}_s)$ is K-(poly)stable. To prove this, by making $r$ small we may choose a non-vanishing holomorphic section $s$ of $\lambda_{CM}$ over $B_r(0)$. Now define $\tilde s(x)=\int_{K} g^{-1}. s(g.x) dg$, where $dg$ is a bi-invariant Harr measure on $K$.
Then $\tilde s$ is a $K$-invariant holomorphic section over $B_r(0)$, and hence $G$-
invariant (more precisely, it is invariant in the Lie algebra level since ${\it Lie}(G)={\it Lie}(K)^{\C}$). On the other hand, since $G$ fixes zero, and $\X_0$ is K-polystable, $G$ acts trivially on the fiber of $\lambda_{CM}$ over 0. So $\tilde s(0)=s(0)\neq 0$. By making $r$ smaller again we may assume $\tilde s$ is also nowhere vanishing over $B_r(0)$. Now suppose $x\in B_r(0)$ is not polystable in the GIT sense, then there is a unique polystable orbit $G. x'$ in the closure of the $G$ orbit of $x$.
Moreover by the Kempf-Ness theorem it is easy to see that we may assume $x'$ is also in $B_r(0)$ (for example, we can choose $x'$ to satisfy the moment map equation $\mu(x')=0$), and there is a one parameter subgroup $\lambda \colon \mathbb{C}^{*}\rightarrow G$ that degenerates $x$ to $x'$. Since $\tilde{s}$ is non-vanishing and $Lie(G)$-invariant, it follows that the Donaldson-Futaki invariant as the weight of the action of the $\C^{*}$ action on the fiber of $\lambda_{CM}$ over $x'$ must vanish. By 
assumption $\X_{x'}$ is not isomorphic to $\X_s$, this implies $\X_x$ is not K-polystable.  

\subsection{Semi-universal $\Q$-Gorenstein deformations} \label{Versal deformations}
In this subsection we provide some general theory on $\Q$-Gorenstein deformations, continuing Section \ref{DG input}. 
Most of the general theory we review in the former half of this subsection 
should be well-known to experts of deformation theory but we review them for the convenience. 
First, the following is well-known, see for example the Main Theorem in page $2$ of \cite{Ma}: 

\begin{lem}[\cite{KS}, \cite{Ma}]\label{local no obstruction}
A $T$-singularity has a smooth semi-universal $\Q$-Gorenstein deformation. 
\end{lem}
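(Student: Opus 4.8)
The plan is to reduce to the classification of $T$-singularities and to treat the two types separately. Recall that a $T$-singularity is either a Du Val (ADE) singularity or a cyclic quotient of type $\frac{1}{dn^2}(1, dna-1)$ with $\gcd(a,n)=1$. In the Du Val case the singularity is a hypersurface singularity, hence Gorenstein of index one, so that a $\Q$-Gorenstein deformation is the same thing as an ordinary deformation; since hypersurface singularities are unobstructed, the miniversal deformation is smooth (of dimension equal to the Milnor number) and we are done. Thus the content of the lemma lies in the cyclic quotient case, which I would handle by passing to the index-one (canonical) cover.

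For $X_0 = \frac{1}{dn^2}(1, dna-1)$, I would first exhibit its canonical cover explicitly as the $A_{dn-1}$ hypersurface singularity
\[
 V = \{\, xy = z^{dn} \,\} \subset \C^3,
\]
on which $\mu_n$ acts by $(x,y,z) \mapsto (\zeta x, \zeta^{-1} y, \zeta^a z)$ for a primitive $n$-th root of unity $\zeta$; this action is free away from the origin and the quotient $V/\mu_n$ recovers $X_0$ (one checks this directly on monomials: for $\frac14(1,1)$ one finds $V=A_1$ and $V/\mu_2 = \C^2/\mu_4$). The point of this presentation is that $V$ is Gorenstein and a hypersurface, so its deformation theory is unobstructed: its miniversal deformation is the smooth family $xy = z^{dn} + \sum_{j=0}^{dn-2} a_j z^j$ over a smooth base of dimension $dn-1$.

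Next I would invoke the general principle that $\Q$-Gorenstein deformations of a quotient singularity correspond to $\mu_n$-equivariant deformations of its index-one cover: forming the index-one cover in families sets up an equivalence between the $\Q$-Gorenstein deformation functor of $X_0$ and the equivariant deformation functor of $(V,\mu_n)$. Under this equivalence, the base of the miniversal $\Q$-Gorenstein deformation is identified with the $\mu_n$-invariant locus inside the miniversal base of $V$. Since the $\mu_n$-action on that base is linear — the coefficient $a_j$ of $z^j$ transforms by $\zeta^{aj}$, so $a_j$ is invariant exactly when $n \mid j$ — the invariant locus is the linear subspace spanned by $a_0, a_n, \dots, a_{(d-1)n}$, which is smooth of dimension $d$. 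Descending the restricted equivariant family then yields a smooth semi-universal $\Q$-Gorenstein deformation of $X_0$.

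I expect the main obstacle to be making the correspondence in the previous paragraph precise: one must construct the index-one cover relatively over an arbitrary Artinian (or analytic) base, verify that it commutes with base change, and check that taking the $\mu_n$-quotient and taking the index-one cover are mutually inverse on the relevant functors, so that smoothness and the semi-universal property genuinely transfer from $V$ to $X_0$. A secondary point is to confirm the existence of a semi-universal object, rather than merely smoothness of the functor; this follows either from Schlessinger's criteria applied to the $\Q$-Gorenstein functor or, more concretely, from the explicit $\mu_n$-invariant subfamily above together with the versality and $\mu_n$-equivariance of the full family over $V$.
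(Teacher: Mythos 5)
Your proposal is correct and follows essentially the same route as the paper, which cites \cite{KS} for this lemma and then, in the paragraph immediately following it, records exactly your argument: the Du Val case is an unobstructed hypersurface singularity, and the cyclic quotient $\frac{1}{dn^2}(1,dna-1)$ is treated via its index-one cover $z_1z_2=z_3^{dn}$ with the $\mu_n$-action of weights $(1,-1,a)$, whose invariant deformations $z_1z_2=z_3^{dn}+a_{d-1}z_3^{(d-1)n}+\cdots+a_0$ give the smooth $d$-dimensional semi-universal $\Q$-Gorenstein family. The functorial identification of $\Q$-Gorenstein deformations with equivariant deformations of the canonical cover, which you rightly flag as the main technical point, is precisely the content of \cite{KS} (see also \cite{Ma}) and is taken as given in the paper.
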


A $T$-singularity is either Du Val (ADE type), which is a hypersurface singularity in $\C^3$ and has a smooth semi-universal $\Q$-Gorenstein deformation space, or a cyclic quotient of type $\frac{1}{dn^2}(1, dna-1)$ with $(a, n)=1$. The latter is the quotient of the Du Val singularity $A_{dn-1}$ by the group $\Z/n\Z$. More precisely, an $A_{dn-1}$ singularity embeds as a hypersurface  $z_1z_2=z_3^{dn}$ in $\C^3$. The generator $\zeta_n$ of $\Z/n\Z$ acts on $\C^3$ by $\zeta_n. (z_1, z_2, z_3)=(\zeta_n z_1, \zeta_n^{-1} z_2,  \zeta_n^a z_3)$, where $\zeta_n$ is the $n$-th root  of unity. One can explicitly write down a semi-universal $\mathbb{Q}$-Gorenstein deformation as the family of hypersurfaces in $\C^3$ given by $z_1z_2=z_3^{dn}+a_{d-1} z_3^{(d-1)n}+\cdots+ a_0$, see \cite{Ma}. Then its dimension is $d$. 

Moreover, it is also known that T-singularities are the only quotient surface singularities which admit $\mathbb{Q}$-Gorenstein smoothings.

Furthermore, for log Del Pezzo surface $X$ which only has T-singularities, 
since $H^{2}(T_{X})=0$ (cf., e.g., \cite[Proposition 3.1]{HP}), 
we know that $X$ has global $\mathbb{Q}$-Gorenstein smoothing. 
Summarising up, we have

\begin{lem}[\cite{KS}, Proposition 3.10 and \cite{HP}, Proposition 2.2]
Let $X$ be a log Del Pezzo surface. Then $X$ is $\Q$-Gorenstein smoothable if and only if it has only $T$-singularities. 
\end{lem}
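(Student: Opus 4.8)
The forward implication is essentially local and nearly definitional. If $X$ is $\Q$-Gorenstein smoothable, then restricting the smoothing family $\pi\colon \X \to \Delta$ to a small neighbourhood of each singular point $p \in X$ produces a $\Q$-Gorenstein smoothing of the germ $(X,p)$. Since $X$ is log Del Pezzo its singularities are quotient singularities, and a quotient singularity admitting a $\Q$-Gorenstein smoothing is by definition a $T$-singularity. Hence every singularity of $X$ is a $T$-singularity, which is the ``only if'' direction.

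For the converse I would use $\Q$-Gorenstein deformation theory together with a local-to-global argument. Write $\mathcal{T}^0 = T_X$, and let $\mathcal{T}^1, \mathcal{T}^2$ denote the sheaves governing, respectively, the first-order $\Q$-Gorenstein deformations and their obstructions; the latter two are supported on the finite set $\mathrm{Sing}(X)$, with stalk at each $p$ computing the local $\Q$-Gorenstein deformation and obstruction spaces of the germ $(X,p)$. The global deformations and obstructions are assembled from these via the local-to-global spectral sequence $E_2^{p,q} = H^p(X, \mathcal{T}^q) \Rightarrow \mathbb{T}^{p+q}$, whose five-term exact sequence reads
$$ 0 \to H^1(X, \mathcal{T}^0) \to \mathbb{T}^1 \to H^0(X, \mathcal{T}^1) \to H^2(X, \mathcal{T}^0) \to \mathbb{T}^2, $$
so that the global obstruction $\mathbb{T}^2$ receives contributions only from $H^2(X, \mathcal{T}^0)$, $H^1(X, \mathcal{T}^1)$ and $H^0(X, \mathcal{T}^2)$.

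Now I would invoke the three vanishing inputs. By Lemma \ref{local no obstruction} each $T$-singularity has a smooth, hence unobstructed, semi-universal $\Q$-Gorenstein deformation, so the stalks of $\mathcal{T}^2$ vanish and therefore $H^0(X, \mathcal{T}^2) = 0$; since $\mathcal{T}^1$ is supported on points one also has $H^1(X, \mathcal{T}^1) = 0$; and because $X$ is log Del Pezzo, $H^2(X, T_X)$ vanishes by Kodaira--Nakano, as noted just above. It follows that $\mathbb{T}^2 = 0$, so the global $\Q$-Gorenstein deformation functor is unobstructed and admits a semi-universal deformation over a smooth base with tangent space $\mathbb{T}^1$. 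The same vanishing $H^2(X, \mathcal{T}^0)=0$ in the exact sequence shows that the restriction map $\mathbb{T}^1 \to H^0(X, \mathcal{T}^1)=\bigoplus_{p} (\mathcal{T}^1)_p$ from global first-order deformations to the direct sum of the local ones is surjective.

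To finish I would extract an actual smoothing: by surjectivity one can choose a global deformation direction whose restriction near each $p$ is the smoothing direction supplied by Lemma \ref{local no obstruction}, cut down to a one-parameter subfamily $\X \to \Delta$, and observe that a general fibre is then smooth near every former singular point and smooth away from them by openness of the smooth locus; hence $X$ is $\Q$-Gorenstein smoothable. The main obstacle is not these cohomological inputs but making the $\Q$-Gorenstein formalism precise, namely defining the sheaves $\mathcal{T}^q$ (via the index-one covers of the $T$-singularities) so that their stalks genuinely compute the local $\Q$-Gorenstein theory of Lemma \ref{local no obstruction} and so that the local-to-global spectral sequence and the surjectivity-to-local-deformations step are legitimate in this restricted deformation category; once this is set up, the two vanishing statements do all the remaining work.
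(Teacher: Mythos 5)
Your argument is correct and follows essentially the same route as the paper (which cites \cite{HP} for this lemma but sketches exactly this mechanism in the surrounding discussion and in Lemma \ref{local-global}): the forward direction by localizing the smoothing, and the converse by combining the local unobstructedness of $T$-singularities with the vanishing of $H^2(X,T_X)$ via the local-to-global $\mathrm{Ext}$ spectral sequence to lift the local smoothing directions to a global $\Q$-Gorenstein deformation.
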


We give more precise structure of the deformations as follows, 
which should be certainly known to experts. 

\begin{lem}\label{local-global} Let $X$ be a $\Q$-Gorenstein smoothable log Del Pezzo surface with singularities $p_1, \cdots, p_n$. Then for the $\mathbb{Q}$-Gorenstein 
deformation tangent space $\Def(X)$ of $X$, we have 
$$0\rightarrow \Def '(X)\rightarrow \Def(X) \rightarrow \bigoplus_{i=1}^n \Def_i\rightarrow 0,  $$
where $\Def'(X)$ is the subspace of $\Def(X)$ corresponding to equisingular deformations, and  $\Def_i$ is the $\Q$-Gorenstein deformation tangent space of the local singularity $p_i$. Notice $Aut(X)$ naturally acts on $\Def'(X)$ as well 
as on $\Def(X)$. 

Moreover, if $\Aut(X)$ is reductive group, there is an 
affine algebraic scheme $(\Kur(X), 0)$ with tangent space $\Def(X)$ at $0$, and a semi-universal $\mathbb{Q}$-Gorenstein family $\mathcal{U}\rightarrow (\Kur(X), 0)$ which is $\Aut(X)$-equivariant, and the induced action on $\Def(X)$ is the natural one as above. 
Here $\Aut(X)$ denotes the automorphism group of $X$. 
\end{lem}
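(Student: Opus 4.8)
\section*{Proof proposal}

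The plan is to package the statement into the standard framework of $\Q$-Gorenstein deformation theory of Kollár--Shepherd-Barron and Hacking-Prokhorov (\cite{KS}, \cite{HP}), which attaches to $X$ a collection of $\Q$-Gorenstein tangent--cohomology sheaves $\mathcal{T}^q_{\mathrm{QG}}$, defined through the index-one covers of the $T$-singularities, together with global functors $T^i_{\mathrm{QG}}(X)$ satisfying $\Def(X)=T^1_{\mathrm{QG}}(X)$ with obstructions landing in $T^2_{\mathrm{QG}}(X)$, and linked by a local-to-global spectral sequence $E_2^{p,q}=H^p(X,\mathcal{T}^q_{\mathrm{QG}})\Rightarrow T^{p+q}_{\mathrm{QG}}(X)$. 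First I would identify the relevant terms. Since locally trivial deformations preserve the singularity type, and hence automatically keep $K_X$ being $\Q$-Cartier of the same index, the degree-zero sheaf agrees with the ordinary tangent sheaf, $\mathcal{T}^0_{\mathrm{QG}}=T_X=\mathcal{H}om(\Omega^1_X,\mathcal{O}_X)$, and $\Def'(X)=H^1(X,T_X)$. Because the $p_i$ are isolated, $\mathcal{T}^1_{\mathrm{QG}}$ is a skyscraper sheaf supported on $\{p_1,\dots,p_n\}$ whose stalk at $p_i$ is precisely the local $\Q$-Gorenstein deformation space $\Def_i=T^1_{\mathrm{QG}}(p_i)$, so that $H^0(X,\mathcal{T}^1_{\mathrm{QG}})=\bigoplus_{i=1}^n\Def_i$.

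The exact sequence is then the five-term exact sequence of this spectral sequence,
$$0\to H^1(X,\mathcal{T}^0_{\mathrm{QG}})\to T^1_{\mathrm{QG}}(X)\to H^0(X,\mathcal{T}^1_{\mathrm{QG}})\to H^2(X,\mathcal{T}^0_{\mathrm{QG}}),$$
which under the identifications above reads $0\to\Def'(X)\to\Def(X)\to\bigoplus_{i}\Def_i\to H^2(X,T_X)$. So the only remaining content is the vanishing $H^2(X,T_X)=0$, which yields the desired surjectivity (and is exactly the input flagged just before the lemma). To prove it I would use that $X$ is an orbifold surface, so that $T_X\cong\Omega^1_X\otimes\mathcal{O}_X(-K_X)$ as reflexive orbifold sheaves (the wedge duality for a rank-two bundle), and invoke the orbifold Kodaira--Nakano vanishing theorem: with $-K_X$ ample and $p+q=1+2>2=\dim X$, one obtains $H^2(X,\Omega^1_X\otimes\mathcal{O}_X(-K_X))=0$. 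Equivalently one may simply cite the vanishing established in \cite{HP}.

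For the second assertion, finite-dimensionality of $T^1_{\mathrm{QG}}(X)$ and $T^2_{\mathrm{QG}}(X)$ together with Schlessinger's criteria produces a semi-universal $\Q$-Gorenstein deformation $\mathcal{U}\to(\Kur(X),0)$ whose Zariski tangent space at $0$ is $\Def(X)$; the Kuranishi space is cut out inside $\Def(X)$ by the obstruction (Kuranishi) map to $T^2_{\mathrm{QG}}(X)$. The equivariance is then a consequence of the functoriality of $\Q$-Gorenstein deformation theory: $\Aut(X)$ acts linearly on both $T^1_{\mathrm{QG}}(X)$ and $T^2_{\mathrm{QG}}(X)$, the obstruction map is $\Aut(X)$-equivariant, and hence one may arrange $\mathcal{U}\to\Kur(X)$ to be $\Aut(X)$-equivariant by Rim's equivariant versal deformation theorem.

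The main obstacle I expect is the $\Aut(X)$-equivariance, since for a general log Del Pezzo surface $\Aut(X)$ need not be reductive; one must therefore rely on the canonicity of the Kuranishi construction, realizing $\Aut(X)$ as acting on a chosen versal datum and extracting an equivariant representative, rather than on a later Luna-type slice argument. The other point requiring genuine care, which underlies both the spectral sequence and the equivariance, is checking that the sheaves $\mathcal{T}^q_{\mathrm{QG}}$ are well defined and functorial through the index-one covers; this is precisely the machinery supplied by \cite{KS} and \cite{HP}, which I would invoke rather than redevelop.
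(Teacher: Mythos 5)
Your proposal is correct and follows essentially the same route as the paper: the exact sequence comes from a local-to-global spectral sequence combined with the vanishing $H^2(X,T_X)=0$ and the local unobstructedness of $T$-singularities (Lemma \ref{local no obstruction}), and the equivariant semi-universal family comes from Grauert/Schlessinger together with Grothendieck existence and an equivariant algebraization theorem. The only cosmetic difference is that you phrase the spectral sequence in terms of the $\Q$-Gorenstein tangent-cohomology sheaves $\mathcal{T}^q_{\mathrm{QG}}$ directly, whereas the paper runs the local-to-global spectral sequence for $\mathrm{Ext}^\bullet(\Omega_X,\mathcal{O}_X)$ and then restricts to the $\Q$-Gorenstein component of the semi-universal deformation via \cite[Theorem 3.9(i)]{KS}.
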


We give a sketch of the proof here. 
In general there is a tangent-obstruction theory for deformation of singular reduced 
varieties, with tangent space 
$\text{Ext}^1(\Omega_X , \O_X )$ and obstruction space $\text{Ext}^2(\Omega_X , \O_X)$. Since $X$ has only isolated singularities and $H^2(X, T_{X})=0$ 
(cf., \cite[Proposition 3.1]{HP}) 
in which the local-to-global obstructions lie in general, 
we have the following natural exact sequnce due to the local-to-global spectral sequence of $\text{Ext}$: 
$$
0\rightarrow H^{1}(T_{X})\hookrightarrow 
{\it Ext}^{1}(\Omega_{X},\mathcal{O}_{X})\twoheadrightarrow 
\oplus_{x\in X}\mathcal{E} xt^{1}(\Omega_{X},\mathcal{O}_{X})\rightarrow 0. 
$$
It is well-known that $H^{1}(T_{X})={\it Def}'(X)$ i.e., 
it is the first order deformation tangent space 
of equisingular deformations. 
The local obstruction for deforming singularities lies in the map 
$$H^0(\mathcal{E} xt^1(\Omega_X, \O_X))=\bigoplus_{i=1}^n\mathcal{E} xt^1_{p_i}(\Omega_X, \O_X) \rightarrow H^0(\mathcal{E}xt^2(\Omega_X, \O_X))=\bigoplus_{i=1}^n\mathcal{E} xt^2_{p_i}(\Omega_X, \O_X).$$
Hence, restricting the above exact sequence to 
the subspace of $\oplus_{x\in X}\mathcal{E}xt^{1}(\Omega_{X},\mathcal{O}_{X})$ 
which corresponds to $\mathbb{Q}$-Gorenstein deformation (cf., 
Lemma \ref{local no obstruction}, \cite[3.9(i)]{KS}) 
we are done.

Now let us argue the construction $\Kur(X)$. 
It follows from general algebraic deformation theory 
(or the Grauert's construction 
of analytic semi-universal deformation \cite{Grauert}) 
that there exists a formal semi-universal 
family 
$\mathcal{X}\rightarrow {\it Spec}(R)$, where $R$ is the completion of 
an essentially finite type local ring. By using the Grothendieck existence theorem 
\cite{FGA} and 
the Artin algebraicity theorem \cite{Art}, 
we obtain a semi-universal deformation. 
Moreover, in the semi-universal deformation, it follows from \cite[Theorem 3.9(i)]
{KS} that $\mathbb{Q}$-Gorenstein deformation corresponds to one irreducible 
component. 

As we stated, 
we can even take semi-universal $\mathbb{Q}$-Gorenstein deformation space 
$\Kur(X)$ as an $\Aut(X)$-equivariant affine scheme i.e., $\Aut(X)$ acts on 
both the total space of the semi-universal deformation above $\Kur(X)$ and $\Kur(X)$ 
equivariantly while the projection is equivariant.  Indeed, it follows from Luna \'etale slice theorem 
which we apply to Hilbert schemes (see \cite[especially section 2]{AK}) 
combined with \cite[Theorem 3.9(i)]{KS}. 

We remark that for our main applications in this paper we will only need the existence of the versal deformation as an analytic germ, in which case we do not have the action of $\Aut(X)$, but only a holomorphic action of $K$ for a maximal compact subgroup of $\Aut(X)$. This follows from the equivariant version of the construction of Grauert \cite{Grauert} (cf., also \cite{Rim}). 

Now we study a particular example, which we will use in Section \ref{Degree12}.
\begin{exa}
Let $X_1^T$ be the quotient of $\P^2$ by $\Z/9\Z$, where the generator $\xi$ of $\Z/9\Z$ acts by $\zeta_9. [z_1: z_2 : z_3]=[z_1: \zeta_9 z_2: \zeta_9^{-1} z_3]$, and $\zeta_9$ is the primitive ninth root of unity. Then $X_1^T$ is a degree one log Del Pezzo surface, with one $A_8$ singularity at $[1:0:0]$ and two $\frac{1}{9}(1,2)$ singularities at $[0:1:0]$ and $[0:0:1]$. In particular it is $\Q$-Gorenstein smoothable and has Gorenstein index $3$. Note the Fubini-Study metric on $\P^2$ descends to a K\"ahler-Einstein metric. Since this metric has constant positive bisectional curvature, the cohomology group $H^1_{\text{orb}}(X, T_{X})$(the space of harmonic $T_{X}$-valued $(0, 1) $ forms on the orbifold) vanishes (see for example, Proposition 9.4 in \cite{Dem}), so by the obvious orbifold generalization of the Kodaira-Spencer theory $X_1^T$ has no equisingular deformations. 
 By the above general theory  and a dimension counting using the Main Theorem in \cite{Ma},  we have a decomposition  $$\Def(X_1^T)=\Def_1\oplus \Def_2\oplus \Def_3, $$ where $\Def_i$ is the $\Q$-Gorenstein deformation tangent space of the local singularity $p_i$. 
It is not hard to see that the connected component of the 
automorphism group is $\Aut^0(X_1^T)=(\C^*)^2$. We want to  identify its action on $\Def(X_1^T)$. We first choose coordinates on $\Aut^0(X_1^T)$ so that $\lambda=(\lambda_1, \lambda_2)$ acts on $X_1^t=\P^2/(\Z/9\Z)$ by  $\lambda. [z_1:z_2:z_3]=[\lambda_1 z_1:\lambda_2 z_2:z_3]$. Around $p_3$ we may choose affine coordinate $y_1=z_1/z_3$, and $y_2=z_2/z_3$. So the action of $\Z/9\Z$ is given by $\xi. (y_1, y_2)=(\zeta_9  y_1, \zeta_9^2 y_2)$, which is the standard model for the $\frac{1}{9}(1,2)$ singularity. The action of $(\C^*)^2$ is then $\lambda. (y_1, y_2)=(\lambda_1y_1, \lambda_2y_2)$. Now a local deformation of the affine singularity $\frac{1}{9}(1,2)$ can be seen as follows. We embed $\C^2/(\Z/9\Z)$ into $\C^3/(\Z/3\Z)$ by sending $(y_1, y_2)$ to $(u, v, w)=(y_1^3, y_2^3, y_1y_2)$. A versal deformation is given by $uv-w^3=s$. The induced action of $(\C^*)^2$ is then $\lambda. s=\lambda_1^{-3}\lambda_2^{-3} s$.  
This is then the weight of the action on $\Def_3$. Similarly one can see the weight on $\
Def_2$ is given by $\lambda_1^{-3}\lambda_2^{6}$. To see the weight on $\Def_1$, we can embed $X_1^T$ into $\P(1,2,9, 9)$ as a hypersurface $x_3x_4=x_2^9$, by sending $[z_1:z_2:z_3]$ to $[x_1:x_2:x_3:x_4]=[z_1:z_2z_3: z_2^9: z_3^9]$. One can easily write down a space of deformations of $X_1^T$  as $x_3x_4=x_2\Pi_{i=1}^8(x_2+a_ix_i)$. This deformation only partially smoothes the $A_8$ singularity, so that $\Def_1$ can be identified with the space of all vectors $(a_1, \cdots, a_8)$. It is then easy to see the weight of the action of $\lambda$ on $\Def_1$ is $\lambda_1\lambda_2^{-1}$. So we have arrived at:

\begin{lem}\label{local GIT}
The action of $\Aut^0(X_1^T)$ on $\Def(X_1^T)$ is given by 
$$\lambda. (v_1, v_2, v_3)=(\lambda_1\lambda_2^{-1} v_1, \lambda_1^{-3}\lambda_2^{6}v_2, \lambda_1^{-3}\lambda_2^{-3} v_3).$$
\end{lem}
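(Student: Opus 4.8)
The plan is to verify Lemma~\ref{local GIT} by a direct weight computation at each singular point, since the preceding Example already furnishes the decomposition $\Def(X_1^T)=\Def_1\oplus \Def_2\oplus \Def_3$ and an explicit realization of the $(\C^*)^2$-action on $X_1^T=\P^2/(\Z/9\Z)$ via $\lambda.[z_1:z_2:z_3]=[\lambda_1 z_1:\lambda_2 z_2:z_3]$. The strategy is to handle each $\Def_i$ separately: for the two $\frac{1}{9}(1,2)$ singularities $p_2,p_3$ I would read off the induced torus action on the one-dimensional versal smoothing parameter using the embedding into $\C^3/(\Z/3\Z)$, and for the $A_8$ singularity $p_1$ I would use the global hypersurface model in $\P(1,2,9,9)$ to extract the weights on the smoothing parameters $(a_1,\dots,a_8)$.

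First I would treat $p_3=[0:0:1]$. Using the affine chart $y_1=z_1/z_3,\ y_2=z_2/z_3$, the torus acts by $\lambda.(y_1,y_2)=(\lambda_1 y_1,\lambda_2 y_2)$, and the versal deformation is presented through the invariants $(u,v,w)=(y_1^3,y_2^3,y_1y_2)$ cutting out $uv-w^3=s$. Since $u$ scales by $\lambda_1^3$, $v$ by $\lambda_2^3$, and $w$ by $\lambda_1\lambda_2$, consistency of $uv-w^3=s$ forces $s$ to carry weight $\lambda_1^3\lambda_2^3$; but the deformation \emph{parameter} transforms inversely to the function it multiplies, giving weight $\lambda_1^{-3}\lambda_2^{-3}$ on $\Def_3$, as claimed. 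I would then repeat this verbatim for $p_2=[0:1:0]$, where the chart is $y_1=z_1/z_2,\ y_2=z_3/z_2$ and the torus acts with different weights (here $z_3$ is fixed while $z_1,z_2$ scale), producing the stated weight $\lambda_1^{-3}\lambda_2^{6}$ on $\Def_2$; the only subtlety is bookkeeping the correct local weights of $y_1,y_2$ induced from the restriction of the global action to the chart around $p_2$.

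For $\Def_1$ I would use the embedding of $X_1^T$ into $\P(1,2,9,9)$ as $x_3x_4=x_2^9$ via $[x_1:x_2:x_3:x_4]=[z_1:z_2z_3:z_2^9:z_3^9]$, and trace the weights of $x_1,x_2,x_3,x_4$ under $\lambda$: namely $x_1\mapsto\lambda_1 x_1$, $x_2\mapsto\lambda_1\lambda_2\cdot$(something) — here I must be careful, since $z_3$ is fixed and $z_2\mapsto\lambda_2 z_2$, so $x_2=z_2z_3$ carries weight $\lambda_2$, $x_3=z_2^9$ carries $\lambda_2^9$, and $x_4=z_3^9$ is fixed. Plugging into the deformation $x_3x_4=x_2\prod_{i=1}^8(x_2+a_ix_i)$ and demanding invariance of each factor determines the weight of each $a_i$; the claim is that this works out uniformly to $\lambda_1\lambda_2^{-1}$, which I would confirm by matching the weight of $a_ix_i$ to that of $x_2$ for the relevant index and checking the overall balance of the defining equation.

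The main obstacle I anticipate is not conceptual but a matter of getting every sign and exponent right simultaneously: the deformation parameters transform \emph{contravariantly} to the monomials they accompany, and the weights of the local coordinates at $p_1$ and $p_2$ must be correctly induced from the global $[z_1:z_2:z_3]$-action by passing to the appropriate affine or weighted-projective chart, which is exactly where the asymmetry between $p_2$ and $p_3$ (and hence the differing weights $\lambda_1^{-3}\lambda_2^{6}$ versus $\lambda_1^{-3}\lambda_2^{-3}$) arises. I would therefore organize the computation so that the ambiguity in normalizing the $\Z/9\Z$-action is fixed once and for all by the chosen coordinates, and then simply read off each of the three weights; a useful internal consistency check is that the three weights should be compatible with the fact that the diagonal torus acting trivially on $\P^2$ (i.e.\ the subgroup scaling all $z_i$ equally) must act trivially on $\Def(X_1^T)$, which gives a linear relation the three weight vectors ought to satisfy.
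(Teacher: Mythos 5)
Your plan is the paper's own proof: the lemma is established by precisely the weight computation you describe, carried out in the example immediately preceding the statement, and your treatments of $\Def_2$ and $\Def_3$ reproduce it correctly (including the convention that the deformation parameter transforms inversely to $uv-w^3$). The gap is at $\Def_1$, where you defer the actual computation and promise to ``confirm'' the weight $\lambda_1\lambda_2^{-1}$; if you run your own procedure you will not get that. By weighted homogeneity in $\P(1,2,9,9)$ the deformation term must be $a_ix_1^2$ (degree $2$, to match $x_2$), so in the affine chart $x_1=1$ the family reads $uv=w\prod_i(w+a_i)$ with $u=(z_2/z_1)^9$, $v=(z_3/z_1)^9$, $w=z_2z_3/z_1^2$. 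Since $w$ scales by $\lambda_2\lambda_1^{-2}$, each $a_i$ must scale by $\lambda_2\lambda_1^{-2}$, i.e.\ carries weight $\lambda_1^{2}\lambda_2^{-1}$ after the same inversion you applied at $p_3$ --- not $\lambda_1\lambda_2^{-1}$. (More precisely, the $a_i$ are not linear coordinates on the miniversal base: on $\Def_1\cong\C[w]/(w^8)$ the torus acts with the eight characters $(\lambda_1^2\lambda_2^{-1})^k$, $k=2,\dots,9$. Since these are all positive powers of a single character, the polystability analysis in the subsequent lemma is unaffected: the inequalities become $2a-b\geq 0$, $-3a+6b\geq 0$, $-3a-3b\geq 0$, which again force $a=b=0$.)

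Your proposed sanity check is also the wrong one: the parametrization $\lambda.[z_1:z_2:z_3]=[\lambda_1 z_1:\lambda_2 z_2:z_3]$ already quotients out the diagonal torus, so triviality on the diagonal imposes no relation. Two checks that do work, and both of which detect the discrepancy, are: (i) the map $(\C^*)^2\to\Aut^0(X_1^T)$ given by this formula has kernel $\Z/9\Z=\langle(\zeta_9,\zeta_9^2)\rangle$, so any character occurring in $\Def(X_1^T)$ must satisfy $a+2b\equiv 0\pmod{9}$; this holds for $(-3,6)$, $(-3,-3)$ and $(2,-1)$ but fails for $(1,-1)$; (ii) the involution $z_2\leftrightarrow z_3$ descends to $X_1^T$, fixes $p_1$, swaps $p_2$ and $p_3$, and conjugates $(\lambda_1,\lambda_2)$ into $(\lambda_1\lambda_2^{-1},\lambda_2^{-1})$; this correctly exchanges the stated weights of $\Def_2$ and $\Def_3$, and it fixes $\lambda_1^2\lambda_2^{-1}$ but not $\lambda_1\lambda_2^{-1}$. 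So as written your argument would not close, and the exponent in the first slot of the displayed formula needs to be corrected before the ``direct verification'' can succeed.
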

\end{exa}
From Lemma \ref{local-global} we have a linear action of a group $\Aut(X)$ on $\Def(X)$. If $\Aut(X)$ is reductive (for example, when $X$ admits a K\"ahler-Einstein metric, by Matsushima's theorem \cite{Matsu}), one can take a GIT quotient $\Def(X)//\Aut(X)$. 
So we are in the situation of the remark after Lemma 3.6, and locally analytically this can be viewed as a ``local" coarse moduli space of $\Q$-Gorenstein deformations of $X$. The following lemma provides a more precise link between the Gromov-Hausdorff convergence and algebraic geometry.

\begin{lem} \label{continuity}
Let $X_\infty$ be the Gromov-Hausdorff limit of  a sequence of K\"ahler-Einstein Del Pezzo surfaces $X_i$, then for $i$ sufficiently large 
we may represent $X_i$ by  a point $u_i$ in an open neighborhood of the GIT quotient 
$\Kur(X_\infty)//\Aut(X_\infty)$ (analytically interpreted above as $\Def(X)//\Aut(X)$) so that $u_i\rightarrow 0$ as $i$ goes to infinity.
\end{lem}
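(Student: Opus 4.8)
The plan is to pass from the embedded, Hilbert-scheme convergence supplied by Proposition \ref{Fano limit} to convergence inside the Kuranishi space $\Kur(X_\infty)$, and then to correct the resulting points to polystable representatives using K-polystability.

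First I would fix the starting data. By Proposition \ref{Fano limit} there is a single $k$ so that, embedding each $X_i$ and $X_\infty$ in $\P^N$ by an orthonormal basis of $H^0(-kK)$, the $X_i$ converge to $X_\infty$ as subvarieties; equivalently the Hilbert points $[X_i]$ converge to $[X_\infty]$ in the relevant Hilbert scheme. Moreover, by Lemma \ref{Qsmoothable} and the Cartier property of $-rK$ on the total space of a limiting family, the deformations of $X_\infty$ arising here are $\Q$-Gorenstein, so the semi-universal $\Q$-Gorenstein family $\U\to(\Kur(X_\infty),0)$ of Lemma \ref{local-global}, with its $\Aut(X_\infty)$-action, is the relevant versal object. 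I would then compare the two local models: letting $G=\SL(N+1)$ act on $\Hilb$, the stabilizer of $[X_\infty]$ is $\Aut(X_\infty)$, which is reductive by Matsushima's theorem since $X_\infty$ is K\"ahler-Einstein. Applying Luna's \'etale slice theorem at $[X_\infty]$ produces an $\Aut(X_\infty)$-invariant slice transverse to the $G$-orbit whose $\Q$-Gorenstein part is, by versality, $\Aut(X_\infty)$-equivariantly identified with a neighborhood of $0$ in $\Kur(X_\infty)$, compatibly with Lemma \ref{local-global}. Since the slice retraction is algebraic, hence continuous, and sends $[X_\infty]$ to $0$, for $i$ large the smooth surface $X_i$ is realized as a fibre $\U_{v_i}$ for a point $v_i\in\Kur(X_\infty)$ with $v_i\to 0$.

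Finally I would upgrade $v_i$ to a polystable point, so that it genuinely descends to a point of $\Kur(X_\infty)//\Aut(X_\infty)$ \emph{parametrizing} $X_i$ rather than some degeneration of it. Each $X_i$ is K\"ahler-Einstein, hence K-polystable by Theorem \ref{KEtoKstability}; applying the local CM comparison of Lemma \ref{local CM stability} to $\U\to\Kur(X_\infty)$, whose central fibre $X_\infty$ is itself K-polystable (again by Theorem \ref{KEtoKstability}, since the Gromov-Hausdorff limit carries a weak K\"ahler-Einstein metric), shows that $v_i$ is GIT-polystable for the $\Aut(X_\infty)$-action. Its orbit is therefore closed, $v_i$ maps to a point $u_i\in\Kur(X_\infty)//\Aut(X_\infty)$ parametrizing $X_i$, and by continuity of the GIT quotient map $u_i\to 0$.

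I expect the main obstacle to be the middle step, namely making rigorous the passage from convergence of embedded varieties in $\Hilb$ to convergence of points in $\Kur(X_\infty)$. This requires identifying the $\Q$-Gorenstein slice of the $G$-action with the semi-universal base of Lemma \ref{local-global} in an $\Aut(X_\infty)$-equivariant fashion, and verifying that the resulting classifying map is continuous, so that $v_i\to 0$ genuinely follows from $[X_i]\to[X_\infty]$; the polystability correction and the final convergence in the quotient are then comparatively formal.
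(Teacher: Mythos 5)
Your proposal is correct and follows essentially the same route as the paper's own proof: convergence of Tian's embeddings in the Hilbert scheme, a Luna slice at $[X_\infty]$ mapped to $\Kur(X_\infty)$ by versality to get $v_i\to 0$, and then Lemma \ref{local CM stability} combined with Theorem \ref{KEtoKstability} to upgrade $v_i$ to a polystable point whose image $u_i$ in $\Kur(X_\infty)//\Aut(X_\infty)$ represents $X_i$. The extra details you supply (Matsushima for reductivity, the equivariant identification of the slice with the semi-universal base) are exactly the points the paper leaves implicit.
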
 

\begin{proof}

From Section \ref{DG input}, we know there are integers $m, N$, such that by passing to a subsequence  the surface $X_i$ converges to $X_\infty$, under the projective embedding into  $\P^N$ defined by orthonormal section of $H^0(X_i, -mK_{X_i})$. Since $X_\infty$ has reductive automorphism group, we can choose a Luna slice $S$ in the component of  the Hilbert scheme corresponding to $\Q$-Gorenstein smoothable deformations of $X_\infty$. Hence for $i$ large enough, $X_i$ is isomorphic to a surface parametrized by $s_i(\in S)\to 0$. By the versality, shrinking $S$, we have a map $F\colon S \rightarrow \Kur(X_\infty)$ so that $s$ and $F(s)$ represent isomorphic surfaces. Let $v_i=F(s_i)$. Then $v_i\rightarrow 0$.  Moreover, by  the remark after Lemma \ref{local CM stability}, the corresponding point to $v_i$ in $\Def(X)$  is polystable for $i$ large, thus its image $u_i\in \Kur(X_\infty)//\Aut(X_\infty)$ represents the same surface $X_i$. 
The conclusion then follows. 
\end{proof}

\subsection{Moduli spaces} \label{Moduli space}
In this section we will define precisely what ``moduli of K\"ahler-Einstein 
$\mathbb{Q}$-Fano varieties'' means to us in this paper. 

\begin{defi}[KE moduli stack]\label{KE moduli stack}
We call a moduli algebraic (Artin) stack $\mathcal{M}$ of $\mathbb{Q}$-Gorenstein family of
$\mathbb{Q}$-Fano varieties a \textit{KE moduli stack} if 
\begin{enumerate}
\item It has a categorical moduli $M$ in the category of algebraic spaces; 
\item There is an \'etale covering of $\mathcal{M}$ of the form $\{ [U_i/G_i] \}$ with affine algebraic schemes 
$U_i$ and reductive groups $G_i$, where there is a $G_i$-equivariant 
$\mathbb{Q}$-Gorenstein flat family of $\mathbb{Q}$-Fano varieties. 
\item Closed orbits of $G_i \curvearrowright U_i$  correspond to geometric points of $M$, and parametrize $\Q$-Gorenstein smoothable K\"ahler-Einstein $\mathbb{Q}$-Fano varieties. 
\end{enumerate}
We call the categorical moduli in the category of algebraic space $M$ a \emph{KE moduli space}. 
If it is an algebraic variety, we also call it \emph{KE moduli variety}. 
\end{defi}

\noindent
For an introduction to the theory of algebraic stacks, one may refer to \cite{Andrew}.  
For the general conjecture and for more details on the existence of KE moduli stack, compare Section \ref{K moduli}. 
For our main purposes in proving Theorem \ref{MT}, we only need a much weaker notion. 

\begin{defi}[Analytic moduli space]
An \emph{analytic moduli space} of degree $d$ log Del Pezzo surfaces is a compact analytic space $M_d$ with the following structures:
\begin{enumerate}
\item We assign to each point in $M_d$ a unique isomorphism class  of $\Q$-Gorenstein smoothable degree $d$ log Del Pezzo surfaces. For simplicity of notation, we will denote by $[X]\in M_d$ a point which corresponds to the isomorphism class of the log Del Pezzo surface $X$. 
\item For each $[X]\in M_d$ with $\Aut(X)$ reductive, there is an analytic neighborhood $U$, and a 
quasi-finite locally bi-holomorphic map $\Phi_U$ from $U$ onto an analytic neighborhood of $0\in \Def(X)//\Aut(X)$ 
(where as in the remark after Lemma 3.6, we have chosen a $K$-equivariant identification between analytic neighborhoods in $\Kur(X)$ and $\Def(X)$) 
such that $\Phi_U^{-1}(0)=[X]$ and for any $u\in U$, the surfaces parametrized by $u$ and $\Phi_U(u)$ are isomorphic. 
\end{enumerate}
\end{defi}

\begin{defi}\label{perfect}
We say that an analytic moduli space has {\textit{property (KE)}} if every surface parametrized by  $M_d^{GH}$ is isomorphic to one parametrized by some point in $M_d$.
\end{defi}

\begin{thm}\label{hom}  For any analytic moduli space $M_d$ which has property (KE), there is a homeomorphism from $M_d^{GH}$ to $M_d$, under the obvious map. 
\end{thm}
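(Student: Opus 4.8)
The plan is to construct the homeomorphism $\Phi\colon M_d^{GH}\to M_d$ by exhibiting the four-step strategy sketched in the introduction, which I now assemble from the machinery developed in Sections \ref{DG input}--\ref{Moduli space}. Since $M_d$ has property (KE), every surface parametrized by a point of $M_d^{GH}$ already appears in $M_d$; the content of the theorem is that the induced set-theoretic correspondence is a well-defined continuous bijection, and that continuity together with compactness upgrades it to a homeomorphism.

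First I would \emph{define} the map and check it is well-defined. Given $[X]\in M_d^{GH}$, Proposition \ref{orbifold compactness} and Theorem \ref{T-singularity} tell us $X$ is a $\Q$-Gorenstein smoothable K\"ahler--Einstein log Del Pezzo surface of degree $d$; by property (KE) there is a point of $M_d$ parametrizing a surface isomorphic to $X$, and by part (1) of the definition of analytic moduli space each point of $M_d$ carries a unique isomorphism class, so I set $\Phi([X])$ to be that point. Well-definedness of $\Phi$ as a map of \emph{sets} is immediate; what must be argued is that the target point is unique, i.e. that no two distinct points of $M_d$ parametrize isomorphic surfaces. This is exactly the injectivity statement, which follows from the uniqueness theorem of Bando--Mabuchi \cite{BM} extended to the orbifold setting: a K\"ahler--Einstein metric on a fixed log Del Pezzo surface is unique up to automorphism, so isomorphic surfaces carry isometric metrics and hence give the same Gromov--Hausdorff point, and conversely the local chart in part (2) of the definition shows each isomorphism class occupies a single point.

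Next I would establish \emph{continuity} of $\Phi$, which is the analytic heart of the argument. Take a convergent sequence $[X_i]\to[X_\infty]$ in $M_d^{GH}$. By Proposition \ref{Fano limit} the $X_i$ embed into a fixed $\P^N$ via orthonormal bases of $H^0(-mK)$ and converge to $X_\infty$ as varieties. Lemma \ref{continuity} then represents $X_i$, for large $i$, by polystable points $u_i\in\Kur(X_\infty)//\Aut(X_\infty)$ with $u_i\to 0$. Composing with the finite surjective chart $\Phi_U$ of part (2) of the definition of analytic moduli space, these data identify $\Phi([X_i])$ with points converging to $\Phi([X_\infty])$ in the analytic topology on $M_d$, giving sequential continuity; since $M_d^{GH}$ is metrizable this is genuine continuity. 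Using Lemma \ref{local CM stability} (the local CM stability statement) guarantees the representing points are indeed polystable, so that passing to the GIT quotient is compatible with the Gromov--Hausdorff limit.

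Finally I would run the \emph{topological} closing argument. Surjectivity follows as in the introduction: the image $\Phi(M_d^{GH})$ is open in $M_d^{\mathrm{sm}}$ by the implicit function theorem (\cite{LS}, local deformation unobstructedness from Lemma \ref{local-global}) and closed by the continuity just proved together with compactness of $M_d^{GH}$; since $M_d$ is connected and its smooth locus is dense, the image is all of $M_d$. Thus $\Phi$ is a continuous bijection from the compact space $M_d^{GH}$ to the Hausdorff space $M_d$, and a continuous bijection from a compact space to a Hausdorff space is automatically a homeomorphism. I expect the main obstacle to be the continuity step: one must carefully match three different topologies---the Gromov--Hausdorff distance, the local analytic topology coming from $\Kur(X_\infty)//\Aut(X_\infty)$, and the embedded convergence in $\P^N$---and the reconciliation hinges on Lemma \ref{continuity} and on the polystability supplied by Lemma \ref{local CM stability}, so the burden is really in verifying that the charts $\Phi_U$ transport the analytic convergence faithfully rather than in any single estimate.
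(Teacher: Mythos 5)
Your proposal is correct and follows essentially the same route as the paper: the paper's own proof simply notes that the four-step strategy from the introduction reduces everything to continuity of the natural map, which is precisely the content of Lemma \ref{continuity}, exactly as in your continuity step. The only difference is one of exposition --- you spell out the well-definedness, injectivity (Bando--Mabuchi), surjectivity (open in $M_d^{\mathrm{sm}}$ plus closed), and compact-to-Hausdorff steps that the paper leaves implicit by reference to the introduction.
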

\begin{proof}
To carry out the strategy described in the introduction, we just need the natural map from $M_d^{GH}$ to $M_d$ to be continuous. It suffices to show that if we have a sequence $[X_i]\in M_d^{0}$ converges to a point $[X_\infty]\in M_d^{GH}$, then $\Phi([X_i])$ converges to $\Phi([X_\infty])$. Unwrapping the definitions, this is exactly  Lemma \ref{continuity}.
\end{proof}

In later sections \ref{Degree34} and \ref{Degree12}, we will construct the analytic moduli space $M_d$ for 
$\mathbb{Q}$-Gorenstein smoothable cases one-by-one. We will show that these $M_d$'s satisfy property (KE). Moreover they are actually categorical moduli of moduli stacks $\M_d$, with a Zariski open subset parametrizing all smooth degree $d$ Del Pezzo surfaces.  Thus Theorem \ref{MT} follows.

\section{The cases of degree four and three}\label{Degree34}

\subsection{Degree four case}\label{Degree4}

In this case  Theorem \ref{MT} has already been proved in \cite{MM}. Following the general strategy outlined in the introduction, we give a partially new proof here.
 Recall that smooth degree $4$ Del Pezzo surfaces are realized by the anti-canonical embedding as intersections of two quadrics in $\P^4$. So in order to construct a moduli space, it is natural to consider the following GIT picture
 $$PGL(5; \C) \curvearrowright H_4={\it Gr}(2,  Sym^2 (\C^5)) \hookrightarrow \P_*
(\Lambda^2 Sym^2 (\C^5))\footnote{In this paper $\P_*(V)=\P(V)$ is the covariant projectivization and $\P^*(V)=\P(V^*)$ is the contravariant projectivization. } ,$$ 
with a linearization induced by the Pl\"ucker embedding. 
Here, ${\it Gr}$ stands for the Grassmanian. 

\begin{thm}[Mabuchi-Mukai\ \cite{MM}] \label{M4} An intersection $X$ of two quadrics in $\P^4$ is
\begin{itemize}
 \item stable $\Longleftrightarrow$ $X$ is smooth;
 \item semistable $\Longleftrightarrow$ $X$  has at worst $A_1$ singularities (nodes);
 \item polystable $\Longleftrightarrow$ the two quadrics are simultaneously diagonalizable, i.e. $X$ is isomorphic to the intersection of quadrics
$$ \,\begin{cases} x_0^2+x_1^2+x_2^2+x_3^2+x_4^2=0\\
 \lambda_0 x_0^2+ \lambda_1x_1^2+\lambda_2x_2^2+\lambda_3x_3^2+\lambda_4x_4^2=0 \end{cases}$$
and no three of the $\lambda_i$s are equal (or equivalently, $X$ is either smooth or has exactly two or four $A_1$ singularities).
\end{itemize}
\end{thm}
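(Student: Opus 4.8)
The statement is a classical GIT computation for the action of $PGL(5;\C)$ (equivalently $SL(5;\C)$) on the Grassmannian $H_4 = Gr(2, Sym^2(\C^5))$ parametrizing pencils of quadrics in $\P^4$, linearized by the Pl\"ucker embedding. The plan is to verify the three equivalences by the Hilbert-Mumford numerical criterion. For each pencil $X = \langle Q_0, Q_1\rangle$ and each one-parameter subgroup $\lambda$ acting diagonally as $t\cdot x_i = t^{w_i} x_i$ with $\sum w_i = 0$, one computes the weight of $\lambda$ on the Pl\"ucker line $\Lambda^2(\mathrm{span}(Q_0,Q_1)) \subset \Lambda^2 Sym^2(\C^5)$. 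A monomial $x_ix_j$ has $\lambda$-weight $w_i+w_j$, so the weight of a $2$-plane is determined by the two smallest-weight monomials actually appearing in the pencil after the best possible change of basis within the pencil. I would set up this weight bookkeeping explicitly and read off stability, semistability, and polystability from the sign of the minimal weight over all normalized $\lambda$.

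\textbf{Key steps in order.} First I would reduce to diagonal one-parameter subgroups by the standard fact that instability is witnessed by a maximal torus, so it suffices to range over weight vectors $(w_0,\dots,w_4)$ with $\sum w_i = 0$. Second, I would express the Pl\"ucker weight of a pencil as $\min$ over pairs of a suitable pairing between the monomials present in $Q_0,Q_1$ and the weights $w_i+w_j$, optimizing over the choice of basis of the pencil; this is the combinatorial heart. Third, I would establish the \emph{semistable} criterion: show that if $X$ has a singularity worse than an $A_1$ node (e.g.\ an $A_2$ or worse, or a non-isolated singularity), then there is a destabilizing $\lambda$ concentrating weight so that the pencil lies in a bad coordinate subspace, and conversely that a pencil with at worst nodes is Hilbert-Mumford semistable. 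Fourth, for the \emph{polystable} description I would use that simultaneous diagonalizability $Q_0 = \sum x_i^2$, $Q_1 = \sum \lambda_i x_i^2$ corresponds to closed orbits: a non-diagonalizable semistable pencil degenerates via a torus to a diagonal one, and the "no three $\lambda_i$ equal" condition is exactly the condition that the diagonal pencil is not further destabilized (three equal eigenvalues force an $A_2$ or worse and break polystability). Finally I would translate the eigenvalue/discriminant conditions into the singularity description, using that the singularities of the intersection of two quadrics are governed by the multiplicities of the roots of $\det(Q_0 + s Q_1)$: distinct roots give smooth $X$, a double root gives one pair of $A_1$'s, and a triple or higher root gives worse singularities.

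\textbf{Main obstacle.} The delicate point is the polystable case, specifically matching the numerical condition "no three $\lambda_i$ equal" with both the closed-orbit condition and the singularity count "two or four nodes". One must check carefully that a diagonal pencil with exactly two coincident eigenvalues among the $\lambda_i$ has a closed $SL(5)$-orbit (hence is polystable) and yields precisely two $A_1$ singularities, while the configurations with three coincident eigenvalues are strictly semistable, degenerating to a more special diagonal form. Since this GIT analysis is already carried out by Mabuchi-Mukai, I expect the cleanest route is to follow their eigenvalue-based stratification of pencils of quadrics, recover the Hilbert-Mumford weights from the root multiplicities of $\det(Q_0 + sQ_1)$, and then invoke Theorem \ref{KEtoKstability} together with Theorem \ref{CM stability} to identify GIT polystability with K-polystability, so that the polystable locus is exactly the K\"ahler-Einstein one. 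The surjectivity onto the GIT quotient, together with the classification of the Gromov-Hausdorff limits as surfaces with at worst $A_1$ singularities from the previous section, then closes the loop with the general strategy and makes this the degree-four instance of Theorem \ref{MT}.
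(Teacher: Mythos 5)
The paper offers no proof of this statement at all: Theorem \ref{M4} is quoted from Mabuchi--Mukai \cite{MM} and used as a black box, so there is no internal argument to compare yours against. Your outline --- reduce to diagonal one-parameter subgroups, compute the Pl\"ucker weight of the $2$-plane spanned by the pencil, and match the Hilbert--Mumford criterion against the classification of pencils of quadrics --- is the standard route and is essentially what the cited source does, so the overall strategy is sound.

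Two concrete points would make the outline fail if executed literally. First, the dictionary you propose between root multiplicities of $\det(Q_0+sQ_1)$ and singularities is too coarse: a double root of the binary quintic corresponds either to a single node (Segre symbol $[2,1,1,1]$, corank one with a nontrivial Jordan block) or to two nodes (symbol $[(1,1),1,1,1]$, simultaneously diagonalizable), and only the latter orbit is closed; this distinction is exactly what separates the strictly semistable locus from the polystable one, so the case analysis must be run on Segre symbols rather than on the discriminant alone. Similarly, three equal eigenvalues do not produce an $A_2$ point: they force the pencil to contain a quadric of rank at most two, and $X$ is then singular along an entire conic --- which is also the cleanest way to see such a pencil is destabilized (e.g.\ by the weight vector $(-2,-2,-2,3,3)$). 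Second, your closing appeal to Theorems \ref{KEtoKstability} and \ref{CM stability} is logically backwards relative to the paper's architecture: those results convert K\"ahler--Einstein existence into GIT polystability and are applied \emph{after} Theorem \ref{M4} is in hand; they yield only one implication and cannot substitute for the explicit weight computation that characterizes the stable, semistable and polystable loci. Excise that step and carry out the Pl\"ucker-weight bookkeeping on Segre symbols, and the argument closes.
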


Since two  degree four Del Pezzo surfaces as above considered are abstractly biholomorphic if and only if their equations in the  above embeddings are transformed by the natural action of an element of $PGL(5; \C)$ (this follows by the very ampleness of the anticanonical 
line bundle=, the GIT quotient $$M_4:= H_4^{ss} // PGL(5; \C)$$
parametrizes (abstract) isomorphism classes of polystable intersections of two quadrics. We remark here that the same reasoning should be applied later in the other degree cases.

Moreover, since $M_4$ is naturally coarsely isomorphic to the moduli space of binary quintics on $\P^1$, choosing invariants  as in  \cite{Dolga}, Chapter 10.2, we see that  $M_4$ is isomorphic to 
$\P(1,2,3)$ and that  the smooth surfaces are parametrized by the Zariski open subset $M_4^{\rm sm} \cong \P(1,2,3) \setminus D$, where $D$ is an ample divisor cut out by the equation $z_1^2=128z_2$. \\ 

The $d=4$ case of Theorem \ref{MT} then follows from the following:

\begin{thm}\label{deg4.KE.property}
The above constructed $M_4$ is an analytic moduli space with property (KE).
\end{thm}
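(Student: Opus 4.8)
The plan is to verify the two defining properties of an analytic moduli space for $M_4 = H_4^{ss}//PGL(5;\mathbb{C})$, and then confirm property (KE). First I would check that $M_4$ is a compact analytic space and that each point carries a well-defined isomorphism class of a $\mathbb{Q}$-Gorenstein smoothable degree $4$ log Del Pezzo surface. Compactness is immediate since a GIT quotient of a projective variety by a reductive group is projective. For the assignment of isomorphism classes, I would use Theorem \ref{M4}: each polystable point corresponds to an intersection of two quadrics with at worst $A_1$ singularities, and by the classification in Theorem \ref{logDelPezzo-canonical} together with the singularity constraints from Theorem \ref{T-singularity} and Theorem \ref{Bishop-Gromov}, these are exactly canonical degree $4$ log Del Pezzo surfaces with at most $A_1$ singularities, which are $\mathbb{Q}$-Gorenstein smoothable (indeed they have only $T$-singularities, since $A_1$ is Du Val). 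The key point, which uses the GIT setup, is that two polystable points give isomorphic surfaces if and only if they lie in the same $PGL(5;\mathbb{C})$-orbit; this identifies the points of $M_4$ bijectively with isomorphism classes, since every isomorphism of anti-canonically embedded degree $4$ surfaces is induced by a projective linear transformation.

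Next I would establish the local structure condition (2) in the definition of analytic moduli space. For $[X] \in M_4$ with $\Aut(X)$ reductive, I would apply the Luna slice theorem at the corresponding polystable point in $H_4^{ss}$. The Luna slice $S$ through $X$ is an $\Aut(X)$-invariant locally closed subscheme of the Hilbert scheme, and the GIT quotient $S//\Aut(X)$ maps étale-locally (or at least as a finite surjective map of analytic germs) onto a neighborhood of $[X]$ in $M_4$. The remaining task is to match $S//\Aut(X)$ with $\Kur(X)//\Aut(X)$, which follows from Lemma \ref{local-global}: the versal $\mathbb{Q}$-Gorenstein deformation space $\Kur(X)$ is $\Aut(X)$-equivariantly identified with the relevant component of the Hilbert scheme slice, since for $A_1$ singularities every local deformation is $\mathbb{Q}$-Gorenstein and unobstructed. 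This produces the required finite surjective map $\Psi_U$ from a neighborhood $U$ of $[X]$ onto a neighborhood of $0$ in $\Kur(X)//\Aut(X)$ compatible with isomorphism classes.

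Finally I would verify property (KE): every surface parametrized by $M_4^{GH}$ is isomorphic to one in $M_4$. By Proposition \ref{orbifold compactness} and the singularity list following Theorem \ref{Bishop-Gromov}, any Gromov-Hausdorff limit $X_\infty$ is a canonical degree $4$ log Del Pezzo surface with at worst $A_1$ singularities, hence by Theorem \ref{logDelPezzo-canonical} is an intersection of two quadrics in $\mathbb{P}^4$. By Theorem \ref{KEtoKstability} it is K-polystable, and then by Theorem \ref{M4} (whose polystable list matches exactly the K-polystable intersections via the CM comparison of Theorem \ref{CM stability} or Corollary \ref{local versal}) it is GIT polystable, so it is represented by a point of $M_4$. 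The main obstacle I anticipate is the precise comparison between GIT polystability in the Mabuchi-Mukai sense and K-polystability of the limit surfaces: one must be sure that the anti-canonical polarization used to define K-stability agrees with the Plücker linearization on $H_4$ up to the ampleness needed to invoke the Hilbert-Mumford criterion. This is exactly where Theorem \ref{CM stability} enters, since it requires verifying the Picard rank one hypothesis on the relevant parameter space and exhibiting at least one K-polystable fiber degenerating via a one-parameter subgroup; establishing these hypotheses cleanly, rather than the routine singularity bookkeeping, is the crux of the argument.
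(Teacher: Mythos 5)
Your proposal is correct and follows essentially the same route as the paper: item (1) is immediate from the GIT construction, item (2) comes from the fact that the universal family over $H_4$ is versal (your Luna-slice elaboration is exactly what the paper leaves implicit), and property (KE) is obtained by combining Theorem \ref{logDelPezzo-canonical} with Theorem \ref{KEtoKstability} and Theorem \ref{CM stability}, using that $H_4$ has Picard rank one. Your identification of the hypotheses of Theorem \ref{CM stability} as the point requiring genuine verification matches the paper's (terser) remark that "it is easy to verify the assumptions are satisfied in this case."
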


\begin{proof}
 First we verify that our degenerations of del Pezzo surfaces in $\mathbb{P}^{4}$ can \textit{not} give any  
``pathological test configuration'' in the sense of \cite{LX} 
(called ``almost trivial" test configuration in \cite{Od3}) whose normalization is trivial. 
It is due to the following reason. The central fibers of such pathological test configurations is not equidimensional, so it is
especially \textit{not} Cohen-Macaulay. However, the degenerations in $\mathbb{P}^{4}$ that we considered here
are all Cohen-Macaulay. This is because, in general, weighted projective spaces only have quotient singularities, and so are Cohen-Macaulay 
(cf., e.g., \cite{HR}). Then  the finite times cut by Cartier divisors 
(hypersurfaces) are inductively Cohen-Macaulay (cf., e.g., \cite[page 105]{Mat}). 
Later we will use the similar reasoning for other degrees as well. 

To check $M_4$ is an analytic moduli space, observe that item $(1)$ is obvious, and item $(2)$ follows from the construction of $M_4$ as a GIT quotient (the versal family is the universal one over $H_4$). To see $M_4$ has property (KE), we first use Theorem \ref{logDelPezzo-canonical} to see that any $[X]\in M_4^{GH}$ is parametrized by $H_4$. Then we apply Theorem \ref{KEtoKstability} and Theorem \ref{CM stability}  (since Picard rank of $H_4$ is one, and it is easy to verify the assumptions are satisfied in this case) to see that $[X]$ is parametrized by $M_4$. 
\end{proof} 

  Clearly $\M_4:=[H_4^{ss}/PGL(5;\C)]$ is a quotient stack, so we conclude that it is indeed a KE moduli stack. We make a few remarks here. First of all, the above arguments actually prove that all degree four  K\"ahler-Einstein log Del Pezzo surfaces are parametrized by $M_4$. 
By Theorem \ref{M4} the Gromov-Hausdorff limits of smooth Del Pezzo quartics have only an even number of $A_1$ singularities. The maximum number of such singularities is four. There is exactly one such surface $X_4^T$, which is defined by the equations $x_0x_1=x_2^2=x_3x_4$. It is isomorphic to the quotient $\P^1\times\P^1/(\Z/2\Z)$, where the generator $\xi$ of $\Z/2\Z$ acts as $\xi.(z_1, z_2)=(-z_1, -z_2)$. So it admits an obvious K\"ahler-Einstein metric. 

 It is also easy to see that the action of complex conjugation, which sends a Del Pezzo quartic to its complex conjugate, coincides with the natural complex conjugation on $\P(1,2,3)$.

\subsection{Degree three case}\label{Degree3}
Recall that smooth degree $3$ Del Pezzo surfaces are cubic hypersurfaces in $\P^3$. Note that the anti-canonical bundle is very ample. We recall the following classical GIT picture. The group $PGL(4; \C)$ acts naturally on the space $H_3= \P_{*}(Sym^3 (\C^4)) \cong \P^{19} $ of cubic polynomials. 

\begin{thm}[Hilbert] \label{M3} A cubic surface $X$ in $\P^3$ is
\begin{itemize}
 \item stable $\Longleftrightarrow$ $X$ has at most singularities of type $A_1$;
 \item  semistable $\Longleftrightarrow$ $X$ has at worst  singularities of type $A_1$ or $A_2$;
 \item strictly polystable $\Longleftrightarrow$ $X$ is  isomorphic to the cubic $X_3^T$ defined by equation $x_1x_2x_3=x_0^3$. It is not hard to see that $X_3^T$ has exactly  three $A_2$ singularities, and is isomorphic to the quotient $\P^2/(\Z/3\Z)$, where the generator $\xi$ of $(\Z/3\Z)$ acts by $\xi.[z_1: z_2: z_3]=[z_1: e^{2\pi i/3}z_2: e^{-2\pi i/3}z_3]$. 
\end{itemize}
\end{thm}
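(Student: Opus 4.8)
The plan is to run the Hilbert--Mumford numerical criterion for the $PGL(4;\C)$-action on $H_3 = \P_*(Sym^3(\C^4))$. After conjugating, it suffices to test diagonal one-parameter subgroups $\lambda(t) = \mathrm{diag}(t^{a_0}, t^{a_1}, t^{a_2}, t^{a_3})$ with $a_0 \geq a_1 \geq a_2 \geq a_3$ and $\sum_i a_i = 0$; under such a $\lambda$ a monomial $x_0^{d_0}x_1^{d_1}x_2^{d_2}x_3^{d_3}$ scales with weight $\sum_i a_i d_i$. The criterion then reads: $X=\{F=0\}$ is unstable precisely when some coordinate frame and $\lambda$ make every monomial occurring in $F$ have strictly positive weight (so $\lambda(t)\cdot F \to 0$ in the affine cone), and $X$ fails to be stable precisely when some frame and nontrivial $\lambda$ make every monomial have nonnegative weight; it is strictly semistable when the latter holds but the former does not. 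I would reduce the whole statement to translating these combinatorial conditions into singularity types of $X$.

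For the instability direction I would argue by the contrapositive, exhibiting explicit destabilizing subgroups for every cubic whose singularities are worse than $A_2$, as well as for reducible, non-reduced, and cone-like cubics. Moving a singular point to $[0:0:0:1]$ puts $F$ in the normal form $F = Q_2(x_0,x_1,x_2)\,x_3 + C_3(x_0,x_1,x_2)$, where $Q_2$ is the quadratic tangent cone. A cone over a plane cubic is $F = C_3(x_0,x_1,x_2)$ and is destabilized by $(1,1,1,-3)$; a reducible cubic $F = x_0\,Q$ is destabilized by $(3,-1,-1,-1)$; a cubic singular along a line lies in $(x_0,x_1)^2$ and is destabilized by $(1,1,-1,-1)$. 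The essential computation is the $A_2$/$A_3$ boundary when $Q_2$ has rank two, say $Q_2 = x_0 x_1$ with kernel the $x_2$-axis: the singularity is $A_2$ iff $C_3$ contains the term $x_2^3$, and is $A_3$ or worse iff that term is absent. In the latter case every monomial of $F = x_0x_1x_3 + C_3$ avoids $x_2^3$, and one checks directly that $\lambda = (3,3,-1,-5)$ gives all monomials strictly positive weight, so $X$ is unstable; the presence of $x_2^3$ (which has weight $-3$ for this $\lambda$) is exactly what rescues the $A_2$ case.

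The harder, converse direction is to show that a cubic failing to be stable (resp. semistable) must carry a singularity worse than $A_1$ (resp. $A_2$). Here I would use that, up to the symmetric group and scaling, only finitely many normalized weight vectors are extremal for the degree-$3$ monomial state polytope; for each I would read off the maximal monomial support allowed in the weight-$\geq 0$ (resp. weight-$>0$) half-space and identify the worst singularity that such an $F$ is forced to carry. The \emph{main obstacle} is precisely this enumeration together with the delicate analysis on the rank-two tangent-cone stratum: distinguishing $A_2$ from $A_3$ requires tracking the cubic term along the kernel line of $Q_2$, and one must verify that the GIT weight detects exactly this term across all coordinate frames, not merely in the normal-form frame. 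Thus the argument cannot be localized naively and genuinely needs the global numerical criterion over all conjugates of $\lambda$.

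Finally, to pin down the strictly polystable orbit I would observe that any strictly semistable $F$ admits a nontrivial $\lambda$ for which $\lim_{t\to 0}\lambda(t)\cdot F$ lands in the weight-zero subspace and lies in the closure of the orbit of $F$. Chasing these limits, or invoking the Kempf optimal-destabilizer formalism, shows that the unique minimal orbit in the strictly semistable locus is that of $F_0 = x_0x_1x_3 + x_2^3$, which after relabeling is the cubic $X_3^T = \{x_1x_2x_3 = x_0^3\}$. This $F_0$ is fixed projectively by the two-torus $\{\mathrm{diag}(t^{a_0}, t^{a_1}, 1, t^{-a_0-a_1})\}$, so its stabilizer is reductive and its orbit is closed, and a direct computation gives exactly three $A_2$ singularities. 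Uniqueness of this minimal orbit then yields the stated description of the strictly polystable locus, completing the classification.
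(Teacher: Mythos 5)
The paper does not actually prove Theorem \ref{M3}: it is quoted as a classical result, with the reader referred to Hilbert's dissertation \cite{Hil} and to \cite{Mum} for a modern proof, so there is no internal argument to compare yours against. Your sketch is a correct outline of precisely that standard proof via the Hilbert--Mumford numerical criterion, and the computations you do carry out check out: the destabilizing weights $(1,1,1,-3)$, $(3,-1,-1,-1)$, $(1,1,-1,-1)$ for cones, reducible cubics and cubics singular along a line; the normal form $F=x_0x_1x_3+C_3(x_0,x_1,x_2)$ at a singular point with rank-two tangent cone, where the presence of $x_2^3$ in $C_3$ is exactly the $A_2$ versus $A_{\geq 3}$ dichotomy; and the weight $(3,3,-1,-5)$, which gives every cubic monomial other than $x_2^3$ strictly positive weight. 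Three caveats. First, you never treat singular points whose quadratic tangent cone has rank at most one (the $D$- and $E$-type and worse strata); this is easy --- for instance $(3,1,1,-5)$ destabilizes every $x_0^2x_3+C_3(x_0,x_1,x_2)$ --- but it is a required piece of the implication ``worse than $A_2$ implies unstable'' and should be stated. Second, ``the stabilizer of $F_0$ is reductive, hence its orbit is closed'' is not a valid inference: reductivity of the stabilizer is a necessary, not sufficient, condition for closedness of the orbit, so the identification of the unique minimal orbit must rest on the limit-chasing you also propose, namely that the weight-zero limit of any strictly semistable cubic under a destabilizing one-parameter subgroup lands in the single orbit of $x_1x_2x_3=x_0^3$. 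Third, the converse implications (at worst $A_1$, resp.\ at worst $A_1$ or $A_2$, implies stable, resp.\ semistable) carry most of the weight of the theorem, and you have only described the finite enumeration over extremal weight vectors rather than performed it; the plan is the right one and does close up as in \cite{Mum}, but as written that direction is a programme rather than a proof.
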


Define the quotient stack $\M_3:=[H_3^{ss}/PGL(4;\C)]$ and the corresponding GIT quotient (or in other word, categorical moduli) 
$$M_3:=H_3^{ss}// PGL(4; \C)$$
which parametrizes isomorphism classes of polystable cubics. 
The above Theorem is classical. It was proved by D. Hilbert in his Doctoral dissertation \cite{Hil}. For a modern proof consult \cite{Mum}. 
Moreover, by looking at the ring of invariants 
\cite{Sal}, it is known that $$M_3 \cong \P(1,2,3,4,5),$$
and that $M_3^{\rm sm} \cong \P(1,2,3,4,5) \setminus D$ where $D$ is the ample divisor of equation $(z_1^2-64z_2)^2-2^{11}(8z_4+z_1z_3)=0$. So $M_3^{\rm sm}$  is Zariski open and parametrizes all smooth cubic surfaces. 

Note that we can apply Theorem \ref{CM stability} for universal family 
over $H_3$. Thus it follows that $\M_3$ is a KE moduli stack and $M_3$ is a KE moduli variety. \\

Observe that a Gromov-Hausdorff  limit of smooth K\"ahler-Einstein cubic surfaces has  either exactly three $A_2$ singularities  or at most four $A_1$ singularities. In the former case, it is isomorphic to $X_3^T$. In the latter case, this is the Cayley's cubic $X_3^C$ defined by $x_0x_1x_2+x_1x_2x_3+
 x_2x_3x_0+x_3x_0x_1=0$. It is not hard to see it is isomorphic to the quotient of $X_6/(\Z/2\Z)$, where $X_6$ is the degree six Del Pezzo surface, and the action of $\Z/2\Z$ is induced by the standard Cremona transformation  on $\P^2$, i.e., 
$[z_1: z_2: z_3]\mapsto [z_1^{-1}: z_2^{-1} : z_3^{-1}]$. The existence of K\"ahler-Einstein metrics on $X_3^T$ and $X_3^C$  can also be easily seen using the above quotient description. 

 We remark that it was proved in \cite{DT} that a K\"ahler-Einstein cubic surface must be GIT semistable, and our application of Theorem \ref{CM stability} sharpens this. The existence of K\"ahler-Einstein metrics on cubic surfaces with exactly one $A_1$ singularity was proved by \cite{Wang}, using K\"ahler-Ricci flow on orbifolds and certain calculation of $\alpha$-invariants. In \cite{Spotti2}, by glueing method we know the existence of K\"ahler-Einstein metrics on a partial smoothing of the Cayley cubic $X_3^C$.  For general  cubics with two or three $A_1$ singularities this was previously unknown. Here we actually know that all degree three $\Q$-Gorenstein smoothable K\"ahler-Einstein log Del Pezzo surfaces are parametrized by $M_3$.

 As in the degree four case, the action of complex conjugation on $M_3$ is also given by the natural anti-holomorphic involution.

\section{The cases of degree two and one}\label{Degree12}
\subsection{More detailed study on Gromov-Hausdorff limits}
When the degree is one or two, there are new difficulties as non-canonical singularities could appear in the Gromov-Hausdorff limits. So the  classification of canonical Del Pezzo surfaces (Theorem \ref{logDelPezzo-canonical}) is not enough for our purpose. In degree two by Theorem \ref{Bishop-Gromov} we only need to deal with index 2  log del Pezzo surfaces, which have been classified in \cite{AN}, \cite{Nakayama}, \cite{KK}. We could simply use these classification results directly, but since our assumption is much more restricted, we provide a more elementary approach which treats both $d=1$ and $d=2$ cases.

A common feature of the two cases is the existence of a holomorphic involution.  For a  degree two Del Pezzo surface $X$, it is well-known that the anti-canonical map defines a double cover of $X$ to $\P^2$. Therefore $X$ admits an involution $\sigma$ (``Geiser involution") which is simply the deck transformation of the covering map. The fixed locus of $\sigma$ is smooth quartic curve. If $X$ admits a K\"ahler-Einstein metric $\omega$, then by \cite{BM} $\omega$ must be invariant under any such $\sigma$. 
Similarly for a  degree one Del Pezzo surface $X$, the linear system $|-2K_X|$ defines a double cover of $X$ to $\P(1,1,2)\subset \P^3$. So $X$ also admits an involution $\sigma$ (``Bertini involution").   Again any such $\sigma$ must preserve the K\"ahler-Einstein metric if $X$ admits one. The fixed locus of $\sigma$ consists of the point $[0:0:1]$ and a sextic in $\P(1,1,2)$.

\begin{lem}
Suppose a sequence of degree one (or two) K\"ahler-Einstein Del Pezzo surfaces $(X_i, \omega_i, J_i)$ converges to a Gromov-Hausdorff limit $(X_\infty, \omega_\infty, J_\infty)$, then  by passing to a subsequence one can take a limit $\sigma_\infty$, which is a holomorphic involution on $X_\infty$. 
\end{lem}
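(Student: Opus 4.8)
The plan is to extract the limiting involution as a Gromov-Hausdorff/Anderson-Tian limit of the holomorphic involutions $\sigma_i$ on $X_i$, and then verify that the limit is genuinely a holomorphic involution on $X_\infty$. First I would recall that each $\sigma_i$ (the Geiser involution when $d=2$, the Bertini involution when $d=1$) is an isometry of the K\"ahler-Einstein metric $\omega_i$: indeed $\sigma_i^*\omega_i$ is again a K\"ahler-Einstein metric in the same class $\tfrac{2\pi}{3}c_1(X_i)$, so by the Bando-Mabuchi uniqueness theorem \cite{BM} it is pulled back to $\omega_i$ up to an automorphism, and since $\sigma_i$ is canonically defined as the deck transformation of the (anti)bicanonical double cover it in fact preserves $\omega_i$ exactly. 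Thus each $\sigma_i$ is an isometry of the Riemannian manifold $(X_i, g_i)$.

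The key step is a compactness/equicontinuity argument for the sequence of isometries $\sigma_i$. Since the $\sigma_i$ are $1$-Lipschitz maps between the converging metric spaces $(X_i, d_i) \to (X_\infty, d_\infty)$, a standard Arzel\`a-Ascoli argument in the Gromov-Hausdorff setting (choosing almost-isometries $X_i \to X_\infty$ and passing to a subsequence) produces a limiting $1$-Lipschitz map $\sigma_\infty\colon X_\infty \to X_\infty$; applying the same argument to $\sigma_i^{-1}=\sigma_i$ shows $\sigma_\infty$ is a surjective isometry and that $\sigma_\infty^2=\mathrm{id}$. To upgrade this metric involution to a \emph{holomorphic} one, I would use the Anderson-Tian mode of convergence built into our definition of the Gromov-Hausdorff topology (smooth convergence of both metric and complex structure away from the singular set). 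On the smooth regular part $X_\infty^{\mathrm{reg}}$ the convergence $(X_i, J_i, \omega_i) \to (X_\infty, J_\infty, \omega_\infty)$ is smooth, and the diffeomorphisms $\sigma_i$, being isometries commuting with $J_i$, converge smoothly on compact subsets of $X_\infty^{\mathrm{reg}}$ to a smooth map that is an isometry commuting with $J_\infty$; hence $\sigma_\infty$ is holomorphic on $X_\infty^{\mathrm{reg}}$.

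Finally I would extend $\sigma_\infty$ holomorphically across the singularities. Since $X_\infty$ is normal and $\mathrm{Sing}(X_\infty)$ has complex codimension two, the biholomorphism $\sigma_\infty$ of $X_\infty^{\mathrm{reg}}$ extends to a biholomorphism of the normal variety $X_\infty$ by the Riemann extension theorem (normality lets one extend the bounded holomorphic coordinate functions of $\sigma_\infty$ across the isolated singular points). The relation $\sigma_\infty^2=\mathrm{id}$ holds on the dense set $X_\infty^{\mathrm{reg}}$ and therefore everywhere by continuity, so $\sigma_\infty$ is a genuine holomorphic involution.

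I expect the main obstacle to be the behavior near the singular points: the Gromov-Hausdorff limits of the isometries $\sigma_i$ a priori only give a metric map, and one must rule out that $\sigma_\infty$ does something pathological at $\mathrm{Sing}(X_\infty)$ (for instance permuting singular points or failing to be continuous there). This is handled by combining the isometry property—which forces $\sigma_\infty$ to preserve the singular set, as singular points are metrically distinguished by their tangent cones and orbifold groups—with the codimension-two normality extension, but the interplay between the two notions of convergence at the singularities is the delicate point requiring care.
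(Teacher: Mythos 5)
Your proposal is correct and follows essentially the same route as the paper: extract the limit of the isometries $\sigma_i$ using the Anderson--Tian smooth convergence away from the singular set (the paper does this on the exhausting sets $\Omega_r$ with a diagonal argument), observe that the limit preserves $(\omega_\infty, J_\infty)$ and squares to the identity, and then extend holomorphically across the finitely many singular points (the paper invokes Hartogs where you invoke normality and the Riemann extension theorem, which amounts to the same thing here). The preliminary observation that $\sigma_i$ is an isometry via Bando--Mabuchi is also exactly how the paper sets this up in the paragraph preceding the lemma.
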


\begin{proof} This is certainly well-known. We include a proof here for the convenience of readers. Let $p_1, \cdots, p_n$ be the singular points of $X_\infty$. We denote $\Omega_r=X_\infty\setminus \cup_{j=1}^n B(p_j, r)$. For any $r>0$ small, from the convergence theorem \ref{orbifold compactness},  we know that for $i$ sufficiently large, there are $\sigma_i$ invariant open subsets $\Omega_{i} \subset X_i$ and embeddings $f_i: \Omega_i\rightarrow X_\infty\setminus \{p_1,\cdots, p_n\}$ such that $\Omega_r$ is contained in the image of each $f_i$ and $(f_i^{-1})^*(\omega_i, J_i)$ converges to $(\omega_\infty, J_\infty)$ smoothly.  Then, by passing to a subsequence, the isometries $(f_i^{-1})^*\sigma_i$ converge to a limit $\sigma_{r, \infty}: \Omega_r \rightarrow X_\infty$ with $\sigma_{r, \infty}^*(\omega_\infty, J_\infty)=(\omega_\infty, J_\infty)$. Then we can let $r$ tend to zero and choose a diagonal subsequence so that $\sigma_{r, \infty}$ converges to a holomorphic isometry $\sigma_\infty$ on 
$X_\infty\setminus \{p_1,\cdots, p_n\}$. Then by the Hartog's extension theorem, $\sigma_\infty$ extends to a holomorphic isometry on the whole $X_\infty$. It is also clear $\sigma_{\infty}^2$ is the identity.
\end{proof}
 
\begin{thm} \label{double cover classification}
In the degree two case, $X_\infty$ is either a double cover of $\P^2$ branched along a quartic curve, or a double cover of $\P(1,1,4)$ branched along a degree 8 curve not passing through the vertex $[0:0:1]$.
In the degree one case, $X_\infty$ is either a double cover of $\P(1,1,2)$ branched along the point $[0:0:1]$ and a sextic , or a double cover of $\P(1,2,9)$ branched along the point $[0:1:0]$ and a degree 18 curve  not passing through the vertex $[0:0:1]$.
\end{thm}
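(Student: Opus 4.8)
The plan is to use the two ingredients assembled so far: the constraints on the singularities of $X_\infty$ coming from Theorems \ref{T-singularity} and \ref{Bishop-Gromov} (in particular the resulting bound on the Gorenstein index), together with the holomorphic involution $\sigma_\infty$ produced in the preceding lemma as a limit of the Geiser (resp. Bertini) involutions. The point of the involution is that on a smooth member $X_i$ these involutions act trivially on $H^0(X_i,-K_{X_i})$ in the degree two case and on $H^0(X_i,-2K_{X_i})$ in the degree one case, since the relevant sections are pulled back from the quotient $\P^2$ (resp. $\P(1,1,2)$). By the smooth convergence away from the singularities together with Proposition \ref{Fano limit}, the same triviality persists in the limit, so the corresponding (pluri-)anticanonical map factors through $Y_\infty:=X_\infty/\langle\sigma_\infty\rangle$ and realizes $X_\infty$ as a double cover of the normal log Del Pezzo surface $Y_\infty$. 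The whole statement then reduces to identifying $Y_\infty$ together with the branch locus of the cover.

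For the cases in which the relevant bundle is Cartier --- that is, the entire degree two case, and the degree one case in which no $\frac{1}{9}(1,2)$ singularity occurs --- the Gorenstein index of $X_\infty$ is at most two, and I would simply invoke the classification of Theorem \ref{logDelPezzo-index2}: $X_\infty$ is then a degree $4$ hypersurface in $\P(1,1,1,2)$ or a degree $8$ hypersurface in $\P(1,1,4,4)$ when $d=2$, and a degree $6$ hypersurface in $\P(1,1,2,3)$ when $d=1$. The projection that forgets a top-weight coordinate exhibits each of these as a double cover of $\P^2$, of $\P(1,1,4)$, and of $\P(1,1,2)$ respectively; the branch degree and the condition that the branch avoid the singular vertex of the base then follow, either from the explicit normal forms of Theorem \ref{logDelPezzo-canonical} for the canonical families, or, in the $\P(1,1,4,4)$ case, by requiring that the cover carry only the admissible singularities.

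The genuinely new situation is the degree one case in which a $\frac{1}{9}(1,2)$ singularity appears, forcing Gorenstein index three, so that $-2K_{X_\infty}$ is no longer Cartier and Theorem \ref{logDelPezzo-index2} does not apply; the model here is the surface $X_1^T$ of the Example above. In this case I would compute the graded anticanonical ring $\bigoplus_m H^0(X_\infty,-mK_{X_\infty})$ using Riemann--Roch with the orbifold correction terms dictated by the admissible singularities, read off its generators and single relation, and conclude that $Y_\infty\cong\P(1,2,9)$; the involution $\sigma_\infty$ is seen to interchange the pair of $\frac{1}{9}(1,2)$ points lying over the weight-nine vertex, which is why the quotient carries exactly one such point. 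The branch locus is then pinned down by the double-cover adjunction formula $K_{X_\infty}=\pi^*(K_{Y_\infty}+\tfrac{1}{2}B)$ together with $K_{X_\infty}^2=1$, giving a degree $18$ curve plus the isolated branch point at the weight-two ($A_1$) vertex $[0:1:0]$; that the degree $18$ curve avoids the weight-nine vertex is forced, since otherwise the cover would acquire a singularity outside the list permitted by Theorems \ref{T-singularity} and \ref{Bishop-Gromov}.

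The hard part is this last case, $Y_\infty\cong\P(1,2,9)$. The delicate steps are the local analysis of $\sigma_\infty$ near the non-canonical points --- determining that the $\frac{1}{9}(1,2)$ points are exchanged in pairs and identifying the induced quotient singularity --- and a sufficiently precise control of the dimensions $h^0(X_\infty,-mK_{X_\infty})$ to exclude other $\Q$-Gorenstein degenerations of $\P(1,1,2)$ (for instance higher weighted planes of the same anticanonical degree) as candidates for the base. It is exactly here that the sharp restrictions on the admissible singularity types from Section \ref{DG input} are indispensable, since without them the graded ring would not be forced into the shape of $\P(1,2,9)$.
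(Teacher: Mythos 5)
Your overall skeleton is the same as the paper's: pass the Geiser/Bertini involution to the limit, form $Y_\infty=X_\infty/\sigma_\infty$, identify $Y_\infty$, and then pin down the branch divisor by adjunction and Lemma \ref{quotient singularity}. Using Theorem \ref{logDelPezzo-index2} for the Gorenstein-index-$\leq 2$ cases is a shortcut the paper explicitly says is available. But two points in your plan have real gaps. First, a secondary one: the limiting pluri-anticanonical system you propose to use need not realize $X_\infty$ as a double cover --- $|-2K_{X_\infty}|$ can acquire base points in the limit (Example \ref{degree1 toric}: $|-2K_{X_1^T}|$ has two base points), which is why the paper instead embeds the quotients $Y_i$ by invariant sections of $-kK$ for $k$ large and divisible and concludes that $Y_\infty$ is a $\Q$-Gorenstein \emph{degeneration} of $\P^2$ (resp.\ $\P(1,1,2)$). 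Relatedly, the hypersurface classification alone does not give the double-cover form: a degree $4$ hypersurface in $\P(1,1,1,2)$ through $[0:0:0:1]$, say $wg_2+g_4=0$, is birational to $\P^2$ rather than a double cover, so ``forgetting the top-weight coordinate'' requires first excluding such equations by a singularity analysis you do not carry out.

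The serious gap is in the case you correctly single out as hard, $Y_\infty\cong\P(1,2,9)$. Orbifold Riemann--Roch computes $\chi(-mK)$ purely from $K^2$ and the list of singularity types, so it determines only the Hilbert function of the (invariant) anticanonical ring; it cannot by itself produce ``generators and a single relation,'' and in particular it cannot distinguish $\P(1,2,9)$ from a non-toric partial $\Q$-Gorenstein smoothing of some $\P(9,b^2,2c^2)$ carrying the same singularities $\{A_1,\tfrac{1}{9}(1,2)\}$ and the same $K^2=8$ --- which is precisely the alternative the paper must rule out. The paper closes this by going back to Manetti's structure theorem for the minimal resolution of degenerations of $\P^2$/$\P(1,1,2)$ (the Hirzebruch-surface fibration, the shape of the degenerate fibers, and the $(-5)$-section), showing the resolution is forced to be a toric blow-up of $\F_5$ and hence $Y_\infty=\P(1,2,9)$. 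Some geometric input of this kind --- the degeneration structure of $Y_\infty$, not just dimension counts on $X_\infty$ --- is indispensable here, and your proposal does not supply it.
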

\begin{proof}
We first treat the case of degree one. The proof of the degree two case is essentially the same and we will add some remarks later.  Denote by $Y_i$ the quotient of $X_i$ by $\sigma_i$, so the quotient $Y_\infty=X_\infty/\sigma_\infty$ is the Gromov-Hausdorff limit of $Y_i$'s. For each integer $m$ we have an orthogonal decomposition $H^0(X_i, -mK_{X_i})=V_i\oplus W_i$ with $V_i$ being the $+1$ eigenspace and $W_i$ the $-1$ eigenspace. Then we have a corresponding decomposition $H^0(X_\infty, -mK_{X_\infty})=V_\infty\oplus W_\infty$ on $X_\infty$. Now, by  constructing orthonormal $\sigma_\infty$-invariant sections of $-kK_{X_\infty}$ for some $k$ large divisible, one can show that there is a well-defined map $\iota_\infty: X_\infty\rightarrow \P^*(V_\infty)$, which induces a projective embedding of $Y_\infty$. By an adaption of the H\"ormander technique (\cite{Tian1}, \cite{DS}), this implies that the orthonormal $\sigma_i$-invariant sections of $-kK_{X_i}$(equivalent to sections of $-l K_{Y_i}$ for some 
integer $l$) define an embedding (Tian's embedding) of $Y_i$ into $\P^*(V_i)$ for $i$ sufficiently large. Moreover,  we may assume $Y_i$ converges to $Y_\infty$ as normal varieties in $\P^N$ for some integer $N$. Since $Y_i$'s are all isomorphic to $\P(1,1,2)$, we see that $Y_\infty$ is $\Q$-Gorenstein smoothable, and there is a partial $\Q$-Gorenstein smoothing of $Y_\infty$ to $\P(1,1,2)$. 

\begin{clm}
$Y_\infty$ is isomorphic to $\P(1,2,9)$. 
\end{clm}

Since the degree is preserved in the limit, we have $K^2_{Y_\infty}=8$ and thus we may apply \cite{HP}. Notice the full proof in \cite{HP} relies on the classification theorem of Alexeev-Nikulin \cite{AN}, but in our case we only need the more elementary part \cite{HP1}, without use of \cite{AN}. So we know $Y_\infty$ is either a toric log Del Pezzo surface $\P(a^2, b^2, 2c^2)$ with $a^2+b^2+2c^2=4abc$ or its partial smoothings. Since the orbifold structure group of $X_\infty$ always has order less than $12$,  the order of all the orbifold structure groups of $Y_\infty$ must be less or equal to $22$. Then, by an easy investigation of the above Markov equation, we see that $Y_\infty$ must have two singularities, one of type $A_1$ and one of type $\frac{1}{9}(1,2)$. It could be possible that $Y_\infty$ is a partial smoothing of $\P(9, b^2, 2c^2)$, but we claim then it must be $\P(1,2, 9)$. For this we need to go back to the proof in \cite{HP1}. For the minimal resolution $\pi: \tY_\infty\rightarrow Y_\infty$, 
let $n$ be the largest number such that there is a birational morphism $\mu_n$ from $\tY_\infty$ to  the $n$-th Hirzebruch surface $\F_n$. Let $B'$ be the proper transform of the negative section $B$ in $\F_n$, and let $p: \tY_\infty\rightarrow\P^1$ be the composition of $\mu_n$ with the projection map on $\F_n$. Then by a theorem of Manetti \cite[Theorem 11]{Ma} (see also \cite[Theorem 5.1]{HP1}) we know that $n\geq 2$, and the exceptional locus $E$ of $\pi$ is the union of $B'$ and the components of degenerate fibers of $p$ with self-intersection at most $-2$; furthermore, each degenerate fiber of $p$ contains a unique $-1$ curve. Moreover, by the proof of Theorem 18 in \cite{Ma} (see laso Theorem 5.7 in \cite{HP1}), there are only two possible types for the dual diagram of  the degenerate fiber: one type is that two strings of curves of self-intersection at most $-2$ joined by a $(-1)$-curve, and the other type is that we join a string of $(-2)$-curves though a $(-1)$ curve to the middle of a string of 
curves of self-intersection at most $-2$. In our case we know $Y_\infty$ has exactly one $A_1$ and one $\frac{1}{9}(1,2)$ singularity. By general theory on the resolution of cyclic quotient singularities we know $E$ is the disjoint union of a (-2)-curve and a string of a (-2)-curve and a (-5)-curve. Then one easily sees that the only possibility is that there is exactly one degenerate fiber of $\tY_\infty$ which consists of a string of (-2)-(-1)-(-2)-curve, and one of the (-2)-curves in the string intersects the horizontal section $B'$ which is a (-5)-curve. Clearly $\tY_\infty$ is then a toric blown-up of $\F_5$ and then $Y_\infty$ is toric which must be $\P(1,2,9)$.   This completes the proof of the claim.

The degree of the branched locus follows from the Hurwitz formula for coverings.   The degree 18 curve can not pass through the point $[0:0:1]$, for otherwise the equation would be $a _0x_3f_9(x_1,x_2)+a_1f_{18}(x_1,x_2)=0$. Then by Lemma \ref{quotient singularity} below the singularity on the branched cover is not quotient, so can not be $X_\infty$ by Theorem \ref{orbifold compactness}. This finishes the proof of Theorem \ref{double cover classification} for degree one case.

In the degree two case we can follow exactly the same arguments, noticing that, in this case, $Y_\infty$ must have degree equal to $9$ and that the associated  Markov type equation to be satisfied is now $a^2+b^2+c^2=3abc$ (corresponding to the weighted projective space $\P(a^2,b^2,c^2)$). Thus it follows, by inspection as above, that the only possibility is that $Y_\infty=\P(1,1,4)$ (since  the standard projective plane is the only $\Q$-Gorenstein smoothing of the above weighted projective space). \end{proof}

Noticing that in the above situation our double cover can be realized as a hypersurface in a one dimensional higher weighted projective space, in terms of equations we have the following:
\begin{cor}\label{equation degree one}
In degree one case $X_\infty$ is either a sextic hypersurface in $\P(1,1, 2, 3)$ of the form  $x_4^2=f_6(x_1, x_2, x_3)$, or a degree 18 hypersurface in $\P(1,2,9,9)$ of the form $
x_3^2+x_4^2=f_{18}(x_1, x_2)$. 
\end{cor}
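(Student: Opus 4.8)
The plan is to turn the two geometric descriptions supplied by Theorem \ref{double cover classification} into equations, using the standard realization of a cyclic double cover of a weighted projective space as a hypersurface in a larger one. Recall that a double cover of a normal variety $Z$ branched along a divisor $D\in|2\mathcal{L}|$ is $\mathrm{Spec}_Z(\O_Z\oplus\mathcal{L}^{-1})$, and that when $Z=\P(w_0,\dots,w_k)$, $\mathcal{L}=\O(e)$ and $D=\{g=0\}$ for a weighted-homogeneous $g$ of degree $2e$, this cover is the hypersurface $\{x^2=g\}$ of degree $2e$ in $\P(w_0,\dots,w_k,e)$, where $x$ is a new coordinate of weight $e$. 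It then remains, in each case, to put the equation into normal form by graded coordinate changes.

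For the first case, $Z=\P(1,1,2)$ with coordinates $x_1,x_2$ of weight $1$ and $x_3$ of weight $2$, and the branch divisor is a sextic, so $e=3$. Hence $X_\infty$ is a hypersurface of degree $6$ in $\P(1,1,2,3)$, with $x_4$ the new weight-$3$ coordinate, given directly by $x_4^2=g_6(x_1,x_2,x_3)$. (Starting instead from a fully general sextic $c\,x_4^2+x_4\,q_3+h_6$, the double-cover structure forces $c\neq0$, and the graded substitution $x_4\mapsto x_4-\tfrac{1}{2c}q_3$ restores this form.) This is already the asserted shape $x_4^2=f_6(x_1,x_2,x_3)$.

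For the second case, $Z=\P(1,2,9)$ with coordinates $x_1$ (weight $1$), $x_2$ (weight $2$), $x_3$ (weight $9$), and the branch divisor is a degree-$18$ curve, so $e=9$. Thus $X_\infty$ is the degree-$18$ hypersurface $\{x_4^2=g_{18}(x_1,x_2,x_3)\}$ in $\P(1,2,9,9)$, where $x_4$ is the adjoined second weight-$9$ coordinate. Writing $g_{18}=c\,x_3^2+x_3\,h_9(x_1,x_2)+k_{18}(x_1,x_2)$, the hypothesis that the branch curve avoids the vertex $[0:0:1]$ of $\P(1,2,9)$ says precisely that $g_{18}(0,0,1)=c\neq0$, i.e.\ that the quadratic part $x_4^2-c\,x_3^2$ in the two weight-$9$ variables has rank $2$. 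Completing the square in $x_3$ by the graded substitution $x_3\mapsto x_3-\tfrac{1}{2c}h_9$ absorbs the linear term and leaves $x_4^2-c\,x_3^2=f_{18}(x_1,x_2)$; since $c\neq0$, a further linear rescaling of the weight-$9$ coordinates over $\C$ diagonalizes the left-hand side to $x_3^2+x_4^2$, giving $x_3^2+x_4^2=f_{18}(x_1,x_2)$.

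The only place the geometry enters nontrivially is the rank-$2$ condition on the weight-$9$ quadratic part in the second case: it is exactly this nondegeneracy, guaranteed by the branch locus avoiding the vertex $[0:0:1]$, that simultaneously permits the diagonalization to $x_3^2+x_4^2$ and forces the residual polynomial $f_{18}$ to depend on $x_1,x_2$ alone. I would also make explicit that each completion of the square is realized by a genuine graded automorphism of the ambient weighted projective space (a triangular change mixing a top-weight coordinate with an equal-weight polynomial in the lower-weight coordinates), which is immediate from degree bookkeeping; the remaining weight checks are routine and I would not belabor them.
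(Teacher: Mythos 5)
Your proposal is correct and follows essentially the route the paper intends: the corollary is read off from Theorem \ref{double cover classification} by the standard realization of a cyclic double cover of a weighted projective space as a hypersurface $\{x^2=g\}$ in the weighted projective space with one extra coordinate, followed by completing the square; in particular you correctly identify that the vertex-avoidance condition in the $\P(1,2,9)$ case is exactly what makes the weight-$9$ quadratic part nondegenerate and yields the normal form $x_3^2+x_4^2=f_{18}(x_1,x_2)$.
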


\begin{cor} \label{equation degree two}
In degree two case $X_\infty$ is either a quartic hypersurface in $\P(1,1,1,2)$ of the form $x_4^2=f_4(x_1, x_2, x_3)$ or an octic hypersurface in $\P(1,1,4,4)$ of the form $x_3^2+x_4^2=f_8(x_1, x_2)$.
\end{cor}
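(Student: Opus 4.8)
The plan is to read off explicit equations from the two double-cover descriptions supplied by Theorem \ref{double cover classification}, so that no new geometric input is needed beyond weighted-homogeneous bookkeeping. The corollary is essentially a reformulation of that theorem.

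Consider first the case where $X_\infty$ is the double cover of $\P^2=\P(1,1,1)$ branched along a quartic $\{f_4(x_1,x_2,x_3)=0\}$. A double cover branched along a divisor of degree four is obtained by adjoining one coordinate $x_4$ with $x_4^2=f_4(x_1,x_2,x_3)$; weighted-homogeneity of this relation forces $\mathrm{wt}(x_4)=2$, so $X_\infty$ is the quartic hypersurface $x_4^2=f_4(x_1,x_2,x_3)$ in $\P(1,1,1,2)$. This is the first alternative, and here there is nothing to normalize, since no coordinate of $\P^2$ has weight two.

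In the second case $X_\infty$ is the double cover of $\P(1,1,4)$, with $x_1,x_2$ of weight one and $x_3$ of weight four, branched along a degree-eight curve $\{g_8=0\}$ missing the vertex $[0:0:1]$. Adjoining $x_4$ with $x_4^2=g_8$ again forces $\mathrm{wt}(x_4)=4$, so $X_\infty\subset\P(1,1,4,4)$. Collecting the monomials of $g_8$ by the power of $x_3$ gives $g_8=c\,x_3^2+x_3\,f_4(x_1,x_2)+f_8(x_1,x_2)$, and the hypothesis that the branch curve avoids the vertex reads $g_8(0,0,x_3)=c\,x_3^2\not\equiv 0$, i.e. $c\neq 0$. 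I would then normalize: completing the square through the weight-preserving substitution $x_3\mapsto x_3-\tfrac{1}{2c}f_4(x_1,x_2)$ kills the term linear in $x_3$ and leaves $x_4^2=c\,x_3^2+\tilde f_8(x_1,x_2)$ for some degree-eight form $\tilde f_8$ in $x_1,x_2$. Since $c\neq 0$, the binary quadratic form $x_4^2-c\,x_3^2$ in the two weight-four variables is nondegenerate, so over $\C$ a linear (hence grading-preserving) change of $x_3,x_4$ turns it into $x_3^2+x_4^2$, yielding the octic $x_3^2+x_4^2=f_8(x_1,x_2)$ in $\P(1,1,4,4)$. This is the second alternative.

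I expect no real obstacle here: given Theorem \ref{double cover classification}, the argument is elementary. The only point deserving attention is the use of the vertex-avoidance hypothesis, which is exactly what makes $c\neq 0$ and hence lets the quadratic part in the two highest-weight variables be diagonalized as $x_3^2+x_4^2$ with the branch polynomial $f_8$ depending only on $x_1,x_2$; were $c=0$ the cover would develop a non-quotient singularity over the vertex, which is excluded by Proposition \ref{orbifold compactness}. The same bookkeeping, applied to the double covers of $\P(1,1,2)$ and $\P(1,2,9)$, gives the degree-one statement of Corollary \ref{equation degree one}.
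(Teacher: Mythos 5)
Your proposal is correct and is exactly the translation the paper intends: Corollaries \ref{equation degree one} and \ref{equation degree two} are stated without proof as immediate reformulations of Theorem \ref{double cover classification}, and your weighted-homogeneous bookkeeping (writing the degree-eight branch form as $c\,x_3^2+x_3f_4+f_8$ with $c\neq 0$ from vertex-avoidance, completing the square, and diagonalizing the nondegenerate quadratic in the two weight-four variables over $\C$) supplies the routine details. No gaps.
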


\begin{lem} \label{quotient singularity}
Suppose $f$ is a polynomial and the surface $w^2=f(x, y)$ in $\C^3$ or its $\Z/2\Z$ quotient by $(x,y,w)\mapsto (-x, -y, -w)$ has a quotient singularity at the origin, then $f$ must contain a monomial with degree at most three. 
\end{lem}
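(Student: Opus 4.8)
The plan is to reduce the statement to a multiplicity bound for $f$ and then read that bound off from the discrepancy of a single blow-up. Write $m=\mathrm{mult}_0 f$ for the order of vanishing of $f$ at the origin, i.e. the least degree of a monomial occurring in $f$; the assertion ``$f$ contains a monomial of degree $\le 3$'' is exactly $m\le 3$. By Kawamata's theorem \cite{Kawa} a two-dimensional quotient singularity is the same thing as a log terminal (klt) singularity, so it suffices to prove the contrapositive: if $S:=\{w^2=f(x,y)\}\subset\C^3$ (or its $\Z/2\Z$-quotient) is klt at $0$, then $m\le 3$; equivalently, $m\ge 4$ forces the germ to fail to be klt. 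Since a quotient singularity is normal, I may assume $S$ is normal near $0$, which forces $f$ to be reduced there, so that $m$ is also the multiplicity at $0$ of the reduced branch curve $B=\{f=0\}$.

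First I would dispose of the quotient case by reducing it to the hypersurface case. The involution $\sigma\colon (x,y,w)\mapsto(-x,-y,-w)$ preserves $S$ precisely when $f$ is even, and its only fixed point on $S$ is the origin; hence $\sigma$ acts freely on $S\setminus\{0\}$ and the quotient map $\phi\colon S\to T:=S/\langle\sigma\rangle$ is \'etale in codimension one. Therefore $K_S=\phi^*K_T$, and by the standard behaviour of log terminal singularities under finite morphisms that are \'etale in codimension one (Koll\'ar--Mori), $S$ is klt at $0$ if and only if $T$ is klt at its image point. Thus both alternatives in the statement reduce to the single hypersurface $S=\{w^2=f\}$.

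The core of the argument is then the multiplicity bound for $S=\{w^2=f\}$, for which I would produce one divisorial valuation of discrepancy $\le -1$ whenever $m\ge 4$. The cleanest route is the cyclic-cover formula: the double cover $\pi\colon S\to\C^2$ branched over $B$ satisfies $K_S=\pi^*(K_{\C^2}+\tfrac12 B)$, so $S$ is klt at $0$ if and only if the pair $(\C^2,\tfrac12 B)$ is klt at $0$, i.e. if and only if $\tfrac12<\mathrm{lct}_0(f)$. The elementary bound $\mathrm{lct}_0(f)\le \tfrac{2}{\mathrm{mult}_0 f}=\tfrac{2}{m}$ (the discrepancy of the exceptional divisor of the blow-up of $0\in\C^2$ for $(\C^2,\tfrac12 B)$ is $1-\tfrac{m}{2}$) then gives $\tfrac12<\tfrac{2}{m}$, hence $m<4$, i.e. $m\le 3$. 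As an elementary cross-check one can compute directly on $S$ using the weighted blow-up of $\C^3$ at $0$ with weights $(2,2,m)$ on $(x,y,w)$: the lowest weighted-degree part of $w^2-f$ has weighted degree $2m$, so the resulting exceptional divisor has discrepancy $(2+2+m)-2m-1=3-m$ over $S$, which is $\le -1$ exactly when $m\ge 4$ (and is $\ge 0$ for $m\le 3$, matching that $A_n$ and $D_4$ are canonical).

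The step I expect to require the most care is the rigorous justification of the discrepancy computation in the weighted-blow-up version, where one must verify that the weighted blow-up genuinely extracts a divisorial valuation over $S$ with the stated discrepancy and that no smaller discrepancy is concealed in a reducible or non-reduced exceptional fibre. For this reason I would make the cyclic-cover/log-canonical-threshold argument the primary proof, since it trades the delicate surface computation for the well-understood log canonical threshold of a plane curve and makes the inequality $m\le 3$ transparent, keeping the weighted blow-up only as an illustrative check.
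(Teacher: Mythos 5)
Your proof is correct, and the first half coincides with the paper's: both reduce the $\Z/2\Z$-quotient case to the hypersurface $w^2=f$ by observing that the quotient map is \'etale in codimension one, so that (by Kawamata, two-dimensional quotient $=$ klt) klt-ness passes between the two. Where you diverge is the core multiplicity bound. The paper notes that a klt Gorenstein hypersurface singularity is canonical and then simply cites a Newton-polygon criterion for canonicity of hypersurface singularities (Ishii), which is a one-line black box applied to $w^2-f$ inside $\C^3$. You instead descend to $\C^2$ via the branched double cover, use $K_S=\pi^*(K_{\C^2}+\tfrac12 B)$ to translate klt-ness of $S$ into klt-ness of the pair $(\C^2,\tfrac12 B)$, and read off $m\le 3$ from the discrepancy $1-\tfrac{m}{2}$ of the single blow-up of the origin. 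Your route is more self-contained: it needs only the Hurwitz/crepant-pullback formula for finite covers (Koll\'ar--Mori) and one elementary discrepancy computation, it makes the borderline case transparent ($m=4$ gives discrepancy exactly $-1$, the simple elliptic singularity $w^2=x^4+y^4$), and it makes explicit the reducedness/normality point that the paper leaves implicit. The paper's route is shorter on the page but outsources the combinatorics to the cited criterion. Your instinct to demote the weighted-blow-up computation on $\C^3$ to a cross-check is also sound, since verifying that it extracts a genuine divisorial valuation with the stated discrepancy is exactly the kind of care the cited Newton-polygon criterion is packaging; nothing further is needed.
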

\begin{proof}
If the singularity is a quotient singularity, then singularity $w^2=f(x,y)$ in $\C^3$ is canonical  since the finite map does not have 
branch divisor. Then the statement follows from a criterion of canonicity in terms of Newton polygon (cf. e.g., \cite[Corollary 1.7]{Ish}). 

\end{proof}
The idea of using involutions to study $X_\infty$ was previously used in \cite{Tian1}, where some partial  results were claimed. For example, in Proposition 6.1 of \cite{Tian1}, it was stated that in degree two case $X_\infty$ can have at most $\frac{1}{4}(1,1)$ singularities besides canonical singularities and $|-2K_{X_\infty}|$ is base point free. This agrees with the above result. But, as one can see from the following example, the claims in Proposition 6.2 of \cite{Tian1} that in degree one case $X_\infty$ can have at most one non-canonical singularity and $|-2K_{X_\infty}|$ is base point free, are both incorrect. 

Now we show explicit examples of K\"ahler-Einstein log Del Pezzo surfaces with non-canonical singularities in both degree one and two \footnote{We are indebted to A. Kasprzyk for discussions related to these examples  \cite{KKL}.} . In the next two subsections it will be proved that both are parametrized in the moduli spaces.

\begin{exa} \label{degree2 toric}
Let $X_2^T$ be the quotient of $\P^1\times\P^1$ by the action of $\Z/4\Z$, where the generator $\xi$ of $\Z/4\Z$ acts by $\xi.([z_1:z_2], [w_1:w_2])=([\sqrt{-1}z_1: z_2], [-\sqrt{-1}w_1: w_2])$.  Then it is easy to see that $X_2^T$ is a degree two log Del Pezzo surface, with two $A_3$ singularities and two $\frac{1}{4}(1,1)$ singularities. The standard product of round metrics on $\P^1\times\P^1$ descends to a K\"ahler-Einstein metric on $X_2^T$.  The space $H^0(X_2^T, -K_{X_2^T})$ is spanned by the sections $z_1^2w_1^2$, $z_2^2w_2^2$, and $z_1z_2w_1w_2$. So a generic divisor in $|-K_{X_2^T}|$ is given by the union of two curves $z_1w_1+az_2w_2=0$ and $z_1w_1+bz_2w_2=0$ for $a\neq b$, and is thus reducible. The space $H^0(X_2^T, -2K_{X_2^T})$ is spanned by sections $z_1^4w_1^4, z_2^4w_2^4, z_1^2z_2^2w_1^2w_2^2, z_1^3z_2w_1^3w_2, z_1z_2^3w_1w_2^3, z_1^4w_2^4, z_2^4w_1^4$. 
The subspace $U$ spanned by the first five sections is generated by  $H^0(X_2^T, -K_{X_2^T})$.  
The involution $\sigma$ maps $([z_1:z_2], [w_1:w_2])$ to $([w_1: w_2], [z_1: z_2])$.   The $+1$ eigenspace $V_1$ is still six dimensional, spanned by $U$ and the element $z_1^4w_2^4+z_2^4w_1^4$.  It is easy to see that the image of $X$ under the projection to $V$ is the cone over the rational normal curve of degree $4$, i.e. $\P(1,1,4)$. The branch locus is defined by $z_1^4w_2^4=z_2^4w_1^4$, with singularies exactly at the two $A_3$ singularities. We can also see directly that $X_2^T$ is the hypersurface in $\P(1,1,4,4)$ defined by $x_1^4x_2^4=x_3x_4$. The map is given by 
$$([z_1: z_2], [w_1: w_2])\mapsto (z_1w_1, z_2w_2, z_1^4w_2^4, z_2^4w_1^4). $$
 Make a change of variable $x_3'=x_3+x_4$ and $x_4'=x_3-x_4$, then the projection to the $(x_1, x_2, x_3')$ plane realizes $X_2^T$ as a double cover of $\P(1,1,4)$. 
 \end{exa}

\begin{exa} \label{degree1 toric}
Let $X_1^T$ be the example studied in Section \ref{AG input}. It is a toric degree one K\"ahler-Einstein log Del Pezzo surface with one $A_8$ singularity and two $\frac{1}{9}(1,2)$ singularities. It can be viewed as a hypersurface in $\P(1,2, 9,9)$ given by the equation $x_3x_4=x_2^9$. The embedding is defined by 
$$[z_1:z_2:z_3]\mapsto [z_1: z_2z_3: z_2^9: z_3^9].$$ 
The projection map $\P(1, 2, 9, 9)\rightarrow \P(1,2,9)$ sending $[x_1:x_2:x_3:x_4]$ to $[x_1: x_2: x_3+x_4]$ realizes $X_1^T$ as the double cover of $\P(1, 2, 9)$, branched along the rational curve $x_2^9=x_3^2$. 
On $X_1^T$ the holomorphic involution $\sigma$ simply exchanges $z_2$ with $z_3$. 
One can see the pluri-anti-canonial linear systems on $X_1^T$. $H^0(X_1^T, -K_{X_1^T})$ is spanned by $z_1^3$ and $z_1z_2z_3$, so it has a fixed component $z_1=0$. $H^0(X_1^T, -2K_{X_1^T})$ is spanned by $z_1^6, z_1^4z_2z_3, z_1^2z_2^2z_3^2, z_2^3z_3^3$, so it has two base points $[0:1:0]$ and $[0:0:1]$. We will show below that $X_1^T$ is the Gromov-Hausdorff limit of a sequence of K\"ahler-Einstein degree one Del Pezzo surfaces. This implies that the Proposition 6.2 in \cite{Tian1} is incorrect. Similarly it is easy to see that $|-3K_{X_1^T}|$ is base point free.  
As before we have an eigenspace decomposition $H^0(X_1^T, -mK_{X_1^T})=V_m\oplus W_m$ for $\sigma$. Then $|V_6|$ is base point free, and it defines the embedding of $\P(1,2,9)$ into $\P^{15}$ by sections of $\O(18)$.
\end{exa}

\subsection{Degree two case} \label{degree two case}

 We first recall the moduli space constructed in  \cite{Mukai}.  For a smooth Del Pezzo surface $X$ of degree 2 the anti-canonical map  is a double covering to $\P^2$ branched along a smooth quartic curve $F_4$.  The geometric invariant theory for quartic curves is well-understood 
(cf. \cite{Mum}) as follows. (Note that Mukai's citation \cite[9.3]{Mukai} misses one case.) 

\begin{lem}[\cite{HL} Theorem 2]
For a quartic curve $F_4$ in $\P^2$ we have: 
\begin{itemize}
\item $F_4$ is stable if and only if $F_4$ has only rational double points of type $A_1$ or $A_2$;
\item $F_4$ is strictly polystable  if and only if $F_4$ is one of the following: either a double conic or 
a union of two reduced conics that are tangential at two points and at least one is smooth (called cateye and ox in \cite{HL}). 
\end{itemize}
\end{lem}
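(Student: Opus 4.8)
The plan is to run the Hilbert--Mumford numerical criterion for the natural action of $\SL(3;\C)$ on $\P_*(\mathrm{Sym}^4(\C^3))\cong\P^{14}$, the space of plane quartics, with the $\O(1)$ linearization induced from the embedding. Since every one--parameter subgroup can be diagonalized after an $\SL(3)$ change of coordinates, I would fix weights $(a,b,c)$ with $a\ge b\ge c$ and $a+b+c=0$ and compute, for a quartic $F=\sum c_{ijk}\,x^iy^jz^k$, the weight $\mu(F,\lambda)$ as the maximum of $ia+jb+kc$ over the monomials actually occurring in $F$. Stability (resp. semistability) of $[F]$ is then equivalent to $\mu(F,\lambda)>0$ (resp. $\ge 0$) for every nontrivial $\lambda$. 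The standard reduction is that the extremal destabilizing subgroups are those adapted to a flag $p\in\ell$ in $\P^2$, so that the entire computation is controlled by two geometric quantities: the multiplicity of $F$ at a point, and the order of contact of $\{F=0\}$ with a line.

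For the stable part, I would translate these weight inequalities into the statement that $[F]$ fails to be stable exactly when, in suitable coordinates, $F$ is supported on monomials of large weight for some flag subgroup; concretely this happens iff the curve has a point of multiplicity $\ge 3$, or a double point at which the two branches have contact of order $\ge 2$ with a common tangent line (a tacnode $A_3$ or worse). Running through the classification of planar double points, one checks that a node ($A_1$) and an ordinary cusp ($A_2$) both give $\mu(F,\lambda)>0$ for every flag subgroup, while an $A_3$ or higher, and any triple point, produce a destabilizing $\lambda$ with $\mu\le 0$. This yields the first bullet: $F_4$ is stable iff its only singularities are of type $A_1$ or $A_2$. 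This part is essentially Mumford's computation \cite{Mum}, which I would cite and adapt rather than redo in full.

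For the strictly polystable part I would determine the closed orbits inside the strictly semistable locus, i.e. the semistable quartics with positive--dimensional stabilizer in $PGL(3)$. Given a strictly semistable but non--stable $F$, I would replace it by the limit under an optimal destabilizing $\lambda$; the limit is $\lambda$--invariant and hence has a $\C^*$ (or larger) subgroup in its stabilizer, and one reads off from the fixed monomials which configurations arise. The most degenerate case, where the stabilizer contains a two--dimensional torus, is the double conic; the remaining one--parameter--stabilizer case is a union of two reduced conics meeting at two points each with contact of order two. Here I would be careful to list \emph{both} subcases allowed by the word ``reduced'': two smooth conics tangent at two points (the cateye of \cite{HL}), and a smooth conic together with a line--pair tangent to it at two points (the ox of \cite{HL}). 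This is precisely the case omitted in \cite[9.3]{Mukai}.

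The main obstacle is this last enumeration of closed boundary orbits: the numerical criterion readily tells one \emph{whether} a quartic is semistable, but pinning down the minimal orbit to which each strictly semistable quartic degenerates, and checking that the double conic, the cateye, and the ox exhaust the closed strictly semistable orbits with no further closed orbit hiding among configurations containing a line--pair component, requires actually computing the $\lambda$--limits and verifying minimality of the corresponding orbits. In particular one must confirm that the ox is genuinely a separate closed orbit, rather than lying in the closure of the cateye orbit, which is exactly the subtlety that the correction to \cite{Mukai} is addressing.
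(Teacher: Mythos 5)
The paper offers no proof of this lemma: it records the classical GIT analysis of plane quartics and cites \cite{Mum}, adding only that Mukai's citation omits one case. Your outline is exactly that classical argument --- diagonalize the one-parameter subgroups, reduce the Hilbert--Mumford weight to the multiplicity at a point and the order of contact with a line, then identify the closed orbits in the strictly semistable locus --- and it reaches the correct answer, including the ox as a genuinely separate closed orbit, which is precisely the case the paper says Mukai missed. Two caveats. First, your structural remark about the double conic is wrong: its stabilizer is the copy of $PGL(2;\C)$ preserving the underlying smooth conic, which is three-dimensional but of rank one, so it contains only a one-dimensional torus; in fact no semistable quartic is fixed by a two-dimensional torus, since torus-fixed quartics are monomials and those are all unstable. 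The double conic is still the most degenerate closed orbit (its orbit has dimension $5$ versus $7$ for the cateyes and the ox, and it is the $\lambda_1=\lambda_2$ member of the family $(\lambda_1 z^2+xy)(\lambda_2 z^2+xy)$ whose generic member is a cateye and whose members $\lambda_1\lambda_2=0$ are oxen), but what distinguishes it is the larger reductive stabilizer, not a larger torus. Second, the step you yourself flag as the main obstacle --- checking that the double conic, the cateyes and the ox exhaust the closed strictly semistable orbits, and in particular that the ox is not in the closure of a cateye orbit --- is left as a plan rather than carried out; since the paper does not carry it out either, this matches the text's level of detail, but it is exactly the computation one would have to supply to make the proof self-contained.
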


\noindent
It follows that the quotient $Q:=\mathbb{P}_*(Sym^4\C^3)^{ss}//PGL(3;\C)$
parametrizes certain canonical log Del Pezzo surfaces of degree $2$, away from the double conic.  The stable curves parametrize surfaces with at worst $A_1$ or $A_2$ singularities, the double conic parametrize a non-normal surface with non orbifold singularities (note in fact that the variety of equation $t^2=(x^2+y^2+z^2)^2$ can be decomposed into irreducible factors $(t+x^2+y^2+z^2)(t-x^2+y^2+z^2)=0$), and the other polystable curves parametrize surfaces with exactly $2A_3$ singularities.
As in \cite{Mukai}, we blow up the point corresponding to the double conic to obtain a new variety, denoted by $M_2$. Let $E$ be the exceptional divisor. Then, as in \cite{Shah}, we know $E$ is isomorphic to the GIT moduli space 
$\mathbb{P}_*(Sym^8\C^2)^{ss}//PGL(2;\C)$, parametrizing binary octics  $f_8(x, y)$. Moreover,

\begin{thm}
$M_2$ is an analytic moduli space of log Del Pezzo surfaces of degree two. For any $[s]\notin E$, $X_s$  is the double cover of $\P^2$ branched along the polystable quartic defined by $[s]$, and for $[s]\in E$,  $X_s$ is the double cover of $\P(1,1,4)$ (i.e. the cone over the rational normal curve in $\P^5$) branched along the hyperelliptic curve $z_3^2=f_8(z_1, z_2)$, where $f_8$ is the polystable binary octic defined by $[s]$. 
\end{thm}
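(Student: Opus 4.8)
The plan is to verify directly that $M_2$ satisfies the two defining properties of an analytic moduli space, treating the locus away from the exceptional divisor $E$ and the locus along $E$ separately. Compactness is immediate: $Q$ is a GIT quotient of a projective space, hence projective, and $M_2$ is its blow-up at the point represented by the double conic, so $M_2$ is again projective. The two substantive tasks are then (1) to check that the double-cover recipe assigns to each point of $M_2$ a well-defined isomorphism class of a $\Q$-Gorenstein smoothable degree-two log Del Pezzo surface, and (2) to produce, near each point with reductive automorphism group, the required finite surjective map to a neighborhood of $0$ in the local quotient $\Kur(X)//\Aut(X)$. The canonical Geiser involution $\sigma$ (the deck transformation of $|-K_X|\colon X\to\P^2$, or its weighted analogue for the boundary surfaces) will be the organizing tool throughout, exactly as in Theorem \ref{double cover classification}.

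For property (1): when $[s]\notin E$ the polystable quartic $F_4$ has at worst $A_1,A_2$ singularities, or is a cateye/ox, so its double cover is a canonical (ADE) degree-two log Del Pezzo surface, acquiring $2A_3$ in the strictly polystable case. When $[s]\in E$ the surface is the double cover of $\P(1,1,4)$ branched along $z_3^2=f_8$; by Corollary \ref{equation degree two} such $X$ is the octic hypersurface $x_3x_4=f_8(z_1,z_2)$ in $\P(1,1,4,4)$, and carries precisely two $\frac{1}{4}(1,1)$ singularities over the vertex together with the ADE singularities forced by the multiple roots of $f_8$. In every case the singularities are $T$-singularities, so $X$ is $\Q$-Gorenstein smoothable by \cite{HP}. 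The assignment descends to isomorphism classes because $\sigma$ is intrinsic to $X$: it recovers the branch pair $(\P^2,F_4)$, resp. $(\P(1,1,4),\{z_3^2=f_8\})$, up to projective equivalence, while GIT identifies exactly the projectively equivalent polystable curves, and distinct polystable orbits give non-isomorphic surfaces. The non-normal surface attached to the double conic is removed by the blow-up and replaced by the honest family over $E$, so no point of $M_2$ parametrizes a non-admissible surface.

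For property (2) away from $E$ the argument is formal: the Luna slice theorem presents a neighborhood of $[F_4]\in Q=M_2$ as $\Kur(F_4)//\mathrm{Stab}_{PGL(3;\C)}(F_4)$, and since every small deformation of a degree-two del Pezzo stays a degree-two del Pezzo and thus retains a Geiser involution, the $\Q$-Gorenstein deformations of $X$ are, up to the $\Aut(X)$-action, exactly the $\sigma$-equivariant ones, i.e. deformations of the branch quartic. This identifies $\Kur(X)$ with $\Kur(F_4)$, with $\sigma$ acting trivially and $\Aut(X)/\langle\sigma\rangle=\mathrm{Stab}(F_4)$, so $\Phi_U$ is an isomorphism. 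The main obstacle is the analysis \emph{along} $E$. Here I would compute $\Def(X)$ via Lemma \ref{local-global}: the two $\frac{1}{4}(1,1)$ points each contribute a one-dimensional smoothing direction $\Def_1,\Def_2$, while $\Def'(X)$ records the motions of $f_8$. The decisive feature is the hidden torus $\C^*\subset\Aut^0(X)$ acting in the $\P(1,1,4,4)$-model by $(x_3,x_4)\mapsto(tx_3,t^{-1}x_4)$: it scales $\Def_1$ and $\Def_2$ with \emph{opposite} weights, so that in $\Def(X)//\Aut(X)$ the two smoothings survive only through the invariant product $s_1s_2$, i.e. as a single normal coordinate transverse to the equisingular directions. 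This is exactly why $M_2$ is the blow-up rather than $Q$, with $E$ the equisingular (octic) locus and the normal direction the simultaneous smoothing of both $\frac{1}{4}(1,1)$ points. I would then match the blow-up chart of $M_2$ along $E$ with this local quotient — using $E\cong\P_*(Sym^8\C^2)^{ss}//PGL(2;\C)$ of \cite{Shah},\cite{Mukai} for the tangential directions and the weight computation for the normal one — and invoke Lemma \ref{local CM stability} to ensure the nearby $\Q$-Gorenstein deformations occurring in $\Kur(X)//\Aut(X)$ are GIT-(poly)stable, hence actually realized in $M_2$; a dimension count (both sides have dimension six, the target being normal) together with quasi-finiteness from (1) then upgrades $\Phi_U$ to a finite surjection. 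The most delicate instance is the maximally symmetric point $[X_2^T]$ of Example \ref{degree2 toric}, where $\Aut^0=(\C^*)^2$ and the extra $2A_3$ singularities enlarge $\Def(X)$, so the weight bookkeeping must be carried out with care to confirm the dimensions and weights still cut out precisely the blow-up model.
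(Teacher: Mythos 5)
Your skeleton (verify the two defining properties of an analytic moduli space directly, splitting into the locus off $E$ and the locus along $E$) is reasonable, and your local computation along $E$ is a genuinely nice observation: the hidden $\C^*\subset\Aut^0(X)$ acting on $uv=f_8(z_1,z_2)$ in $\P(1,1,4,4)$ by $(u,v)\mapsto(tu,t^{-1}v)$ does scale the two one-dimensional smoothing directions $\Def_1,\Def_2$ of the $\frac14(1,1)$ points with opposite weights, so that only the product survives in $\Def(X)//\Aut(X)$; this correctly explains \emph{why} the answer is a blow-up of $Q$ with one new normal direction, and it is the degree-two analogue of the weight bookkeeping the paper carries out explicitly only in the degree-one case.

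However, there is a genuine gap exactly at the step you compress into ``match the blow-up chart of $M_2$ along $E$ with this local quotient.'' Property (2) requires a map $\Phi_U$ from a neighborhood $U$ of $[X]\in E$ to $\Kur(X)//\Aut(X)$ such that $u$ and $\Phi_U(u)$ parametrize \emph{isomorphic} surfaces. For $u\in U\setminus E$ the surface attached to $u$ is, by definition of $M_2$, the double cover of $\P^2$ branched along a quartic close to the double conic $q^2=0$; for this to lie in $\Kur(X)$ at all, and to land at the point your weight computation predicts, you must show that such a quartic double cover \emph{is} a small $\Q$-Gorenstein deformation of the double cover of $\P(1,1,4)$ branched along the octic $f_8=f|_C$. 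A dimension count (both sides are six-dimensional) plus quasi-finiteness cannot establish this identification of parametrized surfaces; it only shows the two germs are abstractly comparable. What is needed is an actual flat $\Q$-Gorenstein family over (an equivariant chart of) the blow-up interpolating between the two types of double covers. This is precisely what the paper constructs: the deformation to the normal cone of the fixed conic $C$ produces a family $\mathcal{P}$ degenerating $\P^2$ to $\P(1,1,4)$ over the cone $\mathcal{B}$ over $\B=Bl_0\A$, the quartics $q^2+f$ extend to a family of branch divisors $\mathcal{D}$ whose restriction to the exceptional locus is the family of binary octics, and the double cover $\mathcal{S}$ of $\mathcal{P}$ along $\mathcal{D}$ is the family whose versality yields $\Phi_U$. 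Your proposal reproduces, in deformation-theoretic language, essentially the same incompleteness the paper attributes to Shah's original curve-wise argument (``no family has been constructed''). A secondary symptom of the same gap: you invoke Lemma \ref{local CM stability} to get surjectivity of $\Phi_U$, but K-polystability is irrelevant to property (2) (it only enters for property (KE)); surjectivity again comes from the family together with versality. Also, a small imprecision in your property (1): the ox branch locus yields $2A_3+A_1$, not just $2A_3$.
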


The proof uses some ideas of \cite{Shah} as written in \cite{Mukai}, 
but note that the proof in  \cite{Shah}  is incomplete about the existence of moduli algebraic stack nor 
the blow up is its coarse moduli scheme, since no family has been constructed.  
 The argument in \cite{Shah} 
is curve-wise and only verifies the properness criterion formally. 

\begin{proof}
Let  $H_4$ be the Hilbert scheme of quartics in $\P^2$, and fix a non-degenerate conic $C=\{q=0\}$. 
We identify the automorphism group of $C$ with $PGL(2;\C)$
(The notation $PGL(2;\C)$ only appears in this context of this proof, so should not be confusing).
Denote by $\Psi$ the (9-dimensional) $PGL(2;\C)$-invariant subspace of  $H^0(\mathbb{P}^2, \mathcal{O}(4))$ that corresponds to $H^0(C,\mathcal{O}(4)|_{C})$.
Take an affine space $\mathbb{A}\simeq \C^9$ in $H_4$ which represents $\{ q^2+f_4(x, y, z)\}$ for all quartics $f_4 \in \Psi$. From the construction, this gives 
a Luna \'{e}tale slice. Note that the blow up $\B$ of $\mathbb{A}$ at $0$ is a closed subvariety of $\mathbb{A}\times \mathbb{P}_*(\mathbb{A})$, and let $\mathbb E$ be its exceptional divisor. Let $\mathcal{B}\subset \mathbb{A}\times (\mathbb{A}\setminus \{0\})$ be the cone over $\B$, and $\mathcal{E}=\{0\}\times (\mathbb{A}\setminus \{0\})$ be the cone over $ \mathbb  E$.
For each point  $(a, b) \in \mathcal{B}$, we can associate 
the curve $q^2+b=0$ in $\mathbb{P}^2$. These form a flat projective family $\mathcal Q$ over $\mathcal{B}$. 

On the other hand, consider the trivial family of $(\mathbb{P}^2, C)$ over $\mathcal{B}$. 
We blow up 
$C\times \mathcal{E}$ and contract the strict transform of 
$\mathbb{P}^2\times \mathcal{E}$. It is possible because $\mathcal{E}$ is a 
Cartier divisor in $\mathcal{B}$ and the classical degeneration (deformation to the normal cone of $C$) of $\mathbb{P}^2$ 
to $\mathbb{P}(1,1,4)$ over a smooth curve is constructed in the same way, so
we can do it locally and glue the contraction morphism. Denote the family constructed in this way by $\mathcal{P}\rightarrow \mathcal{B}$.
The generic fibers are $\mathbb{P}^2$ and special fibers (those over $\mathcal E$) are $\mathbb{P}(1,1,4)$. 
We also obtain a natural family of conics $C_\mathcal{P}\subset \mathcal{P}$ over $\mathcal{B}$. 

All the above process is $PGL(2;\C)\times \mathbb{C}^*$-equivariant. Thus we can construct $PGL(2;\C)$-invariant complement of $\mathbb{C}q^2$ 
in $H^0(\mathcal{P}_u, \mathcal{O}(2C_\mathcal{P}))$ ($u\in \mathcal B$) in a continuous way, and extend  the family of quartics $\mathcal Q|_{(\mathcal B \setminus \mathcal E)}$
  to the whole $\mathcal{B}$. We denote the new  total space  by 
$\mathcal{D}$. Notice that over $\mathbb  E$ this is a family of binary octics. Then construct $\mathcal{S}$ as the double of 
$\mathcal{P}$ branched along $\mathcal{D}$. As everything is again 
$PGL(2;\C)\times \mathbb{C}^*$-equivariant, we can first divide by $\mathbb{C}^*$  and obtain
 a $\mathbb{Q}$-Gorenstein flat family $S$ of degree two 
log Del Pezzo surfaces over $\B$. 

There is still an action of $PGL(2;\C)$ on $\B$. We consider GIT with respect to this action and 
with $PGL(2; \C)$-linearized line bundle $\mathcal{O}_\B(-\mathbb  E)$.
The natural morphism $\B^{ss}//PGL(2;\C)\rightarrow \mathbb{A}//PGL(2;\C)$ is 
an isomorphic away from $\mathbb E\subset B$ and $0\in \mathbb{A}$. So this is a blow up with exceptional divisor $\mathbb E^{ss}//PGL(2;\C)$. 
By the  local picture of GIT (\cite[Prop 5.1]{Shah}), 
we can see that $\mathbb{A}//PGL(2;\C)\rightarrow H_4^{ss}//PGL(3;\C)$ is \'etale 
(or in differential geometric language, local bi-holomorphism) 
around $0$. This follows completely the same way as in \cite[Prop 5.1]{Shah} 
or the proof of famous Luna \'etale slice theorem.
Hence, the blow up $\B^{ss}//PGL(2;\C)\rightarrow \A//PGL(2;\C)$ induces blow up $M_2$ of $H_4//PGL(3;\C)$.

To see that $M_2$ is an analytic moduli space for degree two log Del Pezzo surfaces, we only need to check the item (2) in the definition. For this, one  simply notices that, by construction, for any $[s]\in M_2$ there is a Luna's slice $V$ in $H_4$ or in $\B$ (depending on whether $[s]$ is in $E$ or not).  Then by versality there is an $\Aut(X_{s})$ equivariant analytic map $\Psi_U$ from a small analytic neighborhood $U=V//\Aut(X_{s})$ of $[s]$ to the GIT quotient  $\Kur(X_s)//\Aut(X_s)$ so that $\Phi_U^{-1}(0)=0$. Then it follows that $\Psi_U$ is a finite map onto an open neighborhood of $0$.  

In terms of \'etale topology one can also directly check the versality by going through our construction.
We only need to check our 
$(H_4^{ss}\setminus PGL(3;\C)q^2) \coprod \B^{ss}$ is versal in \'etale 
 topology. That is, 
given a $\mathbb{Q}$-Gorenstein 
projective family $f\colon \mathcal{X}\to S$ of our log del Pezzo surfaces of degree $2$, there is a morphism 
$\tilde{S}\rightarrow (H_4^{ss}\setminus PGL(3;\C)q^2) \coprod \B^{ss}$ 
compatible with fibers where $\tilde{S}\rightarrow S$ is an \'etale cover. For this, we can first construct a degenerating family of $\mathbb{P}^2$ to 
$\mathbb{P}(1,1,4)$ over $S$ and from the $\mathbb{Q}$-Gorenstein deformation theory of $\mathbb{P}(1,1,4)$ (with $1$-dimensional smooth semi-universal deformation space)  we know that the locus of $\mathbb{P}(1,1,4)$ should be a Cartier divisor so that we can convert the process to obtain a family of reduced quartics of $\mathbb{P}^2$. 
Thus we have a compatible morphism to 
$(H_4^{ss}\setminus PGL(3;\C)q^2) \coprod B^{ss}$ 
locally in  \'etale topological sense.  
\end{proof}

\begin{rmk}
In terms of algebro-geometric language, $M_2$ coarsely represents the algebraic stack $\mathcal{M}_2$ constructed by gluing together the quotient stacks $[\B^{ss}/PGL(2;\C)]$ and  $[(H_4^{ss}\setminus PGL(3;\C)q^2)/PGL(3;\C)]$. 
\end{rmk}
\begin{rmk}
Replacing blow up and its cone as above by weighted blow up and its quasi-cone, 
the argument in \cite{Shah} can be completed to prove that the blow up is a coarse moduli 
scheme of degree two K3 surfaces and  its degenerations.  
\end{rmk}

The proof of Theorem \ref{MT} follows from the fact that all smooth degree two Del Pezzo surfaces are parametrized by $M_2$ and by

\begin{thm} \label{degree two perfect}
$M_2$ has property (KE). 
\end{thm}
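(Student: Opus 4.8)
The plan is to show that every surface $X_\infty$ arising as a Gromov-Hausdorff limit of smooth degree two Kähler-Einstein Del Pezzo surfaces is isomorphic to a surface parametrized by some point of $M_2$. The strategy follows the uniform approach set out in the introduction, combining the differential-geometric classification of the possible limits with the CM line bundle stability comparison.

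First I would invoke the detailed classification of the limits obtained in Theorem \ref{double cover classification} and Corollary \ref{equation degree two}: any $X_\infty$ is either a double cover of $\P^2$ branched along a polystable quartic curve, or a double cover of $\P(1,1,4)$ branched along a degree $8$ curve $z_3^2=f_8(z_1,z_2)$ not passing through the vertex. By Theorem \ref{KEtoKstability}, since $X_\infty$ admits a Kähler-Einstein metric it is K-polystable. In the first case, $X_\infty$ is determined by its branch quartic $F_4$; I would use the double-cover correspondence together with Theorem \ref{KEtoKstability} and the GIT of plane quartics (as recalled in the lemma preceding the theorem) to conclude that $F_4$ is a GIT-polystable quartic, hence is parametrized by a point of $Q=\mathbb{P}_*(Sym^4\C^3)^{ss}//PGL(3;\C)$ and therefore by the corresponding point of $M_2\setminus E$. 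In the second case, $X_\infty$ lies over $\P(1,1,4)$ and is determined by a binary octic $f_8$; here I would argue that K-polystability of $X_\infty$ forces GIT-polystability of $f_8$ under $PGL(2;\C)$, so that $X_\infty$ is parametrized by a point of the exceptional divisor $E\cong \mathbb{P}_*(Sym^8\C^2)^{ss}//PGL(2;\C)$.

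The technical heart is the passage from K-polystability of $X_\infty$ to GIT-(poly)stability of the associated branch data, and for this I would apply the CM line bundle comparison. The clean tool is Lemma \ref{local CM stability}, applied to the versal $\Q$-Gorenstein deformation family over the relevant Luna slice: having already produced in the previous theorem an $\Aut(X_\infty)$-equivariant family and identified $M_2$ locally with $\Kur(X_s)//\Aut(X_s)$, the CM line bundle is equivariantly trivial on the affine slice, and reducedness of $X_\infty$ (it is a normal double cover) lets me conclude GIT-polystability of the slice point from K-polystability of the fibre. Because the two strata of $M_2$ are glued along $E$ precisely so that the octic data appear as the limit of the quartic data under deformation to the normal cone, this local argument patches to a global statement that $X_\infty$ is represented in $M_2$.

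The main obstacle I anticipate is the stratum $E$: there the base variety $\B$ is not a single GIT quotient with Picard rank one, so Theorem \ref{CM stability} does not apply directly, and I must instead run the \emph{local} comparison of Lemma \ref{local CM stability} on the slice inside $\B$ and check compatibility of the two linearizations (the Plücker-type one on $H_4$ and the $\mathcal{O}_\B(-\mathbb{E})$ one used in the blow-up GIT). Verifying that K-polystability of the double cover translates correctly into polystability of $f_8$ under $PGL(2;\C)$ with this linearization, and that this is consistent with the quartic GIT under the gluing, is the delicate point; once that is established, the remaining verifications are routine applications of the classification and the double-cover dictionary.
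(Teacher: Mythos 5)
Your overall architecture matches the paper's: reduce via Theorem \ref{double cover classification} to the two cases (double cover of $\P^2$ branched in a quartic, or of $\P(1,1,4)$ branched in an octic), invoke Theorem \ref{KEtoKstability} to get K-polystability, and then convert K-polystability of the surface into GIT polystability of the branch data. The quartic case is handled exactly as in the paper. The divergence, and the gap, is in the octic stratum.

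You assert that Theorem \ref{CM stability} ``does not apply directly'' there because the blow-up $\B$ is not a Picard-rank-one GIT quotient, and you propose to substitute the local comparison of Lemma \ref{local CM stability} on a Luna slice, flagging the compatibility of the linearization $\mathcal{O}_\B(-\mathbb{E})$ with the octic GIT as a delicate unresolved point. This is a misdiagnosis: the stability comparison never needs to be run on $\B$ at all. The paper simply takes $S=\P_*(Sym^8(\C^2))$, the projective space of binary octics with its $PGL(2;\C)$-action (no nontrivial characters, Picard rank one), carrying the equivariant flat family of double covers of $\P(1,1,4)$ branched along $z_3^2=f_8$; Theorem \ref{CM stability} then gives GIT polystability of $f_8$ directly, in complete parallel with the quartic case where $S=\P_*(Sym^4(\C^3))$. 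Your substitute route does not close as stated: Lemma \ref{local CM stability} concludes polystability of fibers \emph{near a K-polystable central fiber $0$}, so to cover an arbitrary Gromov-Hausdorff limit on the octic stratum you would need to know in advance a K-polystable surface in whose Kuranishi neighborhood your $X_\infty$ lies, which is essentially what you are trying to prove; and the linearization-compatibility issue you raise is a genuine obstacle on $\B$ that you do not resolve. The repair is simply to abandon the detour through $\B$ and apply the global Theorem \ref{CM stability} to the projective space of octics; the blow-up construction is needed to build $M_2$ as a moduli space, not to verify property (KE).
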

\begin{proof} By Theorem \ref{double cover classification} there are two possibilities for $X\in M_2^{GH}$: it is either a double cover of $\P^2$ branched along a quartic $f_4(x_1,x_2,x_3)=0$, or a double cover of $\P(1,1,4)$ branched along a hyperelliptic octic curve $x_3^2-f_8(x_1,x_2)=0$. It suffices to show $f_4$ and $f_8$ are polystable. For this we use Theorem \ref{KEtoKstability} and Theorem \ref{CM stability}. When applying Theorem \ref{CM stability}, in the first case we choose  $S=\P_*({\it Sym}^4(\C^3))$; in the second case we choose $S=\P_*(Sym^8(\C^2))$. Note that both these parameter spaces have Picard rank one. Also recall the first paragraph of the proof of (\ref{deg4.KE.property}) 
which asserts that there is no pathological test configurations in the sense of \cite{LX} in this situation. 
Thus we can apply theorem \ref{hom} and conclude that all the points in this glued moduli are indeed GH limits.
\end{proof}

So we also conclude that $\M_2$ is a KE moduli stack. As it is immediately clear from the proof, the complex conjugation acts on $M_2$ by the natural anti-holomorphic involution. 

\begin{rmk} \label{Tian conjecture}
 In \cite{Tian1} it is conjectured that degenerations of K\"ahler-Einstein Del Pezzo surfaces should have canonical singularities. In this section we have seen that this conjecture is in general false, as all the surfaces parametrized by the exceptional divisor $E$ have exactly two non-canonical singularities of type $\frac{1}{4}(1,1)$. In general dimension one expects the compact moduli space of smoothable $\Q$-Fano varieties to have log terminal singularities, see \cite{DS}. This type of singularities also appear to be  the worst singularities allowed for K-semistability of Fano varieties, see \cite{Od2}.
\end{rmk}

We finish this subsection by a discussion on the surfaces parametrized by the ox and cateyes, which will be used in our study of degree one case. 
These are defined by equations in $\P(1,1,1,2)$ parametrized by $\lambda=[\lambda_1:\lambda_2]$ in $(\P^1\setminus \{[1:1]\})$ which we denotes by $X_2^{\lambda}$.  The equation of $X_2^{\lambda}$ is 
$$w^2=(\lambda_1 z^2+xy)(\lambda_2 z^2+xy). $$
It is clear that when we interchange $\lambda_1$ and $\lambda_2$ we get isomorphic surfaces. When $\lambda$ is $[1:0]$ or $[0:1]$, the branch locus is an \emph{ox} and the surface $X_2^\infty=X_2^{\lambda}$ with exaclty two $A_3$ plus one $A_1$ singularities, otherwise the branch locus is a \emph{cateye} and $X_2^\lambda$ with exactly two $A_3$ singularities.  By Theorem \ref{degree two perfect} this family of surfaces all admit K\"ahler-Einstein metrics. As $\lambda$ tends to $[1:1]$ these K\"ahler-Einstein surfaces converge to $X_2^T$, with the obvious K\"ahler-Einstein metric.

One can see that $X_2^\infty$ is  a global quotient of $\P^1\times \P^1$, as follows. Consider the action of $\Z/4\Z$ on $\P^1\times \P^1$, where the generator $\xi$ acts by
\[
\xi. ([z_1: z_2], [w_1:w_2])=([-w_1:w_2], [z_1:z_2]).
\]
Then there are exactly four points with nontrivial isotropy. Let $Y$ be the quotient. Then the points $([0:1], [0:1])$ and $([1:0], [1:0])$ are $A_3$ singularities and $([1:0], [0:1])$ 
and $([0:1], [1:0])$ are $A_1$ singularities. 
One can see that the anti-canonical map $p$ from $Y$ to $\P^2$ is given by 
$$([z_1:z_2], [w_1:w_2])\mapsto (z_1^2w_1^2: z_2^2w_2^2: z_1^2w_2^2+z_2^2w_1^2), $$
and the corresponding involution to the double covering structure 
is $$\sigma. ([z_1:z_2], [w_1:w_2])=([w_1:w_2], [z_1:z_2]). $$
The branch locus is defined by $xy(z^2-4xy)=0$ in $\P^2$, which is isomorphic to the ox. So $Y$ is exactly $X_2^\infty$, and it admits an explicit K\"ahler-Einstein metric. 

Notice that $\P^1\times \P^1$ or $\P^2$ has no deformations, so  their quotients by any finite group have no equisingular deformations. But clearly for $\lambda\neq [1:0], [0:1]$,  $X_2^\lambda$ has nontrivial equisingular deformations, so it can not be a global quotient of $\P^2$ or $\P^1\times \P^1$. 

\subsubsection{Relation with moduli of curves}\label{curve.2}

Naturally considering  the associated branch locus for each double cover (i.e. the bi-anti-canonical map), we can regard our moduli $M_2$ as the GIT moduli of bi-canonically 
embedded Hilbert polystable genus $3$ curves, which is constructed in \cite{HL}. 
Indeed, by a direct comparison, 
the corresponding set of parametrized curves are the same. 
We have a $1$-dimensional tacnodal curves and $5$-dimensional hyperelliptic curves. 
They intersect at one point corresponding to the curve $z^2=x^4y^4$ in $\mathbb{P}(1,1,4)$. 
From this point of view, the proof that the moduli space is a blow up of the GIT moduli of plane quartics is given in \cite{Arte} due to David Hyeon. Our proof recovers this result, 
modulo the criterion of the Hilbert stability. 

Thus a natural question would be the corresponding ``Del Pezzo surface 
modular interpretation" for the flipped contraction which contracts the tacnodal 
locus in the paper \cite{HL}. In general, we can ask: 

\begin{quest}
What are the modular interpretations {\textit{via log Del Pezzo surfaces}} 
for each step of the Hassett-Keel program in \cite{HL}? In addition, are there  also
stability interpretations for them? 
\end{quest}

\subsection{Degree one case}\label{Degree1}

From Section \ref{DG input} we know that for any $X\in M_1^{GH}$, there are only three possible types for the non-canonical singularities.  Moreover, we have:

\begin{lem}
The canonical singularities in $X\in M_1^{GH}$ are either $A_1, \cdots, A_8$ or 
$D_4$. 
\end{lem}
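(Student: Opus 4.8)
The plan is to combine the order bound from the Bishop--Gromov inequality (Theorem \ref{Bishop-Gromov}) with a topological Euler characteristic count coming from the $\Q$-Gorenstein smoothing. First I would recall what is already established in Section \ref{DG input}: by Theorem \ref{T-singularity} every canonical singularity of $X$ is of ADE type, and since $\deg(X)=1$ the inequality $|\Gamma_p|\,\deg(X)<12$ forces $|\Gamma_p|\le 11$. Reading off the group orders (order $k+1$ for $A_k$, $4(k-2)$ for $D_k$, and $24,48,120$ for $E_6,E_7,E_8$) this already excludes all $E_n$, excludes $D_k$ for $k\ge 5$ so that only $D_4$ survives among $D$-types, and leaves $A_1,\dots,A_{10}$ among the $A$-types. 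Hence the only thing left to prove is that $A_9$ and $A_{10}$ cannot occur.

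To rule these out I would use the fact that $X\in M_1^{GH}$ is $\Q$-Gorenstein smoothable (Lemma \ref{Qsmoothable}), with general fibre a \emph{smooth} degree one Del Pezzo surface $X_t$, for which $\chi_{\mathrm{top}}(X_t)=11$ (it is a blow-up of $\P^2$ at eight points). Writing the singular points of $X$ as $p_1,\dots,p_n$ and letting $F_i$ be the Milnor fibre of the chosen smoothing at $p_i$, the standard specialization formula for a smoothing with isolated singularities of complex surfaces gives
$$\chi_{\mathrm{top}}(X_t)=\chi_{\mathrm{top}}(X)+\sum_{i=1}^n \mu_i,\qquad \mu_i=\chi_{\mathrm{top}}(F_i)-1 .$$
Each $\mu_i\ge 0$, because the relevant Milnor fibres satisfy $b_1(F_i)=0$ and hence $\chi_{\mathrm{top}}(F_i)=1+b_2(F_i)\ge 1$. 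On the other hand $X$ is a normal projective rational surface with quotient singularities, so it is a $\Q$-homology manifold with $b_1=b_3=0$ and $b_2\ge 1$; therefore $\chi_{\mathrm{top}}(X)\ge 3$. Combining, $\sum_i \mu_i = 11-\chi_{\mathrm{top}}(X)\le 8$.

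Now an $A_k$ singularity is a hypersurface (hence Gorenstein) singularity whose $\Q$-Gorenstein smoothing is the usual one, with Milnor number $\mu=k$; all the other singularities, canonical or not, contribute a nonnegative term. Thus the presence of an $A_k$ point forces $k\le\sum_i\mu_i\le 8$, which excludes $A_9$ and $A_{10}$, and together with the first paragraph leaves precisely $A_1,\dots,A_8$ and $D_4$ as the possible canonical singularities.

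The step I expect to require the most care is justifying the Euler characteristic identity in the $\Q$-Gorenstein rather than the full-deformation setting: one must confirm that the non-canonical $T$-singularities $\tfrac{1}{4}(1,1)$, $\tfrac{1}{8}(1,3)$, $\tfrac{1}{9}(1,2)$ contribute $\mu_i\ge 0$ to the \emph{specific} smoothing realized in the family, which is clear since their $\Q$-Gorenstein Milnor fibres have $b_1=0$, so that they can only tighten the bound on $k$. As a sanity check, the toric surface $X_1^T$ of Example \ref{degree1 toric} realizes the extremal case: it carries one $A_8$ and two $\tfrac{1}{9}(1,2)$ points, with $\sum_i\mu_i=8+0+0=8$ and $\chi_{\mathrm{top}}(X_1^T)=3$, consistent with the bound being sharp.
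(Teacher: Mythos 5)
Your argument is correct. It differs from the paper's only in how the key numerical bound is obtained: the paper deduces the lemma in one line from Theorem \ref{Bishop-Gromov} together with the Noether formula for singular surfaces, $\rho(X)+K_X^2+\sum_{P}\mu_P=12\chi(\O_X)-2$, quoted from \cite{HP}, which with $\chi(\O_X)=1$, $K_X^2=1$ and $\rho(X)\ge 1$ gives $\sum_P\mu_P\le 8$ intrinsically, with no smoothing needed. You instead rederive the same inequality topologically, by comparing $\chi_{\mathrm{top}}$ of the smooth fibre ($=11$) with $\chi_{\mathrm{top}}(X)=2+b_2(X)\ge 3$ via the specialization formula for the $\Q$-Gorenstein smoothing guaranteed by Lemma \ref{Qsmoothable}. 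The two counts are equivalent (apply the smooth Noether formula to $X_t$ and use $\chi_{\mathrm{top}}(X_t)=\chi_{\mathrm{top}}(X)+\sum_i\mu_i$), so the content is the same; what your route buys is self-containedness, at the cost of having to check that the non-canonical $T$-singularities contribute nonnegatively --- which you do correctly, since the $\Q$-Gorenstein Milnor fibre of $\frac{1}{dn^2}(1,dna-1)$ is a free $\Z/n$-quotient of the $A_{dn-1}$ Milnor fibre and hence has $b_1=0$ and Euler characteristic $d\ge 1$. Your use of Bishop--Gromov to eliminate $E_6,E_7,E_8$ and $D_k$ for $k\ge 5$ is exactly what the paper intends (note the Milnor-number count alone would not exclude $D_5,\dots,D_8$), and your sanity check against $X_1^T$ confirms sharpness.
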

\begin{proof} This follows from Theorem \ref{Bishop-Gromov} and the Noether formula for singular surfaces [\cite{HP}, Proposition 2.6]
$$
\rho(X)+K_X^2+\sum _{P\in Sing(X)} \mu_P =12\chi(\O_X)-2, 
$$
where $\rho(X)$ is the Picard rank of $X$ and $\mu_P$ denotes the Milnor number. Notice that $\chi(\O_X)=1$ by the Kodaira vanishing theorem and that the Milnor number of an $A_k$, $D_k$ or $E_k$ singularity is $k$. 
\end{proof}

We mention that,  by using the K\"ahler-Ricci flow and calculating certain $\alpha$-invariant, it has been proved in \cite{Wang},  \cite{CK} that a degree $1$ log Del Pezzo surface with only $A_n$ singularities admits a K\"ahler-Einstein metric, if $n\leq 6$.

\subsubsection{First step: GIT}\label{dP1.Gore}
By Corollary \ref{equation degree one},  a Gromov-Hausdorff limit in degree one is either a double cover of $\P(1,1,2)$ branched along a sextic or a double cover of $\P(1,2, 9)$ branched along a degree 18 curve. As the first step, we will  construct a moduli space of surfaces that are double cover of $\P(1,1,2)$ branched along a sextic that does not pass through $[0:0:1]$. 
These surfaces have equations  $w^2=F(x,y,z) \subset \mathbb{P}(1,1,2,3)$, where $F$ contains a nonzero term $z^3$.

Although the automorphism 
group of  $\mathbb{P}(1,1,2)$ in non-reductive, 
we can construct a compact moduli space of such sextics in 
$\mathbb{P}(1,1,2)$ which are polystable in appropriate GIT sense, following \cite{Shah}. 
Instead of the honest automorphism group $Aut(\mathbb{P}(1,1,2))$, we consider the action of $SL(2;\C) \ltimes H^0(\mathbb{P}^1,\mathcal{O}(2))$ 
which is a finite cover of $Aut(\mathbb{P}(1,1,2))$ and a subgroup of 
$Aut(\mathbb{P}(1,1,2),\mathcal{O}(2))$ (i.e. it also acts on the
linearization). 
First we fix the translation action of $H^0(\mathbb{P}^1,\mathcal{O}(2))$ by requiring the vanishing of the coefficient of $z^2$. Thus we only need to consider surfaces of the form $$w^2=z^3+f_4(x,y)z+f_6(x, y).$$ 
Then, by dividing out by the natural $\mathbb{C}^*$-action 
on $f_4$ and $f_6$ with weights $4, 6$ repectively,  
we obtain a weighted projective space $\P_s:=\mathbb{P}(2,2,2,2,2,3,3,3,3,3,3,3)$ 
as a parameter space. What is left is the action of $SL(2;\C)$ in the two variables $x, y$. Thus we get a GIT quotient  
$$M_1':=\P_s^{ss}//SL(2;\C)$$ as 
a moduli space.  This is similar to \cite{Shah}, where the GIT of degree $12$ curves in $\P(1,1,4)$ was studied.  We have the following  classification of singularities for polystable locus (compare \cite{Shah}, Theorem 4.3): 

\begin{lem} \label{degree one stable classification}
With respect to the GIT stability of the above $SL(2;\C)$-action, our surface $[w^2=z^3+zf_4(x,y)+f_6(x, y) \subset \mathbb{P}(1,1,2,3)]$ 
is: 
\begin{enumerate}
\item stable if and only if it contains at worst $A_k$ singularities;
\item strictly polystable if and only if it contains exactly two $D_4$ singularities or 
$SL(2;\C)$-equivalent to $p_0:=[-\frac{1}{3}(x^2+y^2)^2: \frac{2}{27}(x^2+y^2)^3]$ in $\P_s$ (in this case it is non-normal). 
\end{enumerate}
\end{lem}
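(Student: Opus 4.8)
The plan is to apply the Hilbert--Mumford numerical criterion directly, exploiting that the surface $w^2=z^3+zf_4+f_6$ is a Weierstrass model, so that its singularities are governed by the vanishing orders of $f_4$ and $f_6$ through the classical Kodaira--N\'eron--Tate dictionary. Since $SL(2;\C)$ has rank one, every nontrivial one-parameter subgroup is, after conjugation, the diagonal torus fixing a point $p\in\P^1$, and the relevant data at $p$ is the pair $(a,b)=(\mathrm{ord}_p f_4,\ \mathrm{ord}_p f_6)$. First I would compute the normalized weight of the point $[f_4:f_6]\in\P_s$ against such a subgroup. Because in $\P_s$ the coordinates of $f_4$ carry weight $2$ and those of $f_6$ carry weight $3$ (matching the Weierstrass scaling $z\mapsto s^2z,\ w\mapsto s^3w$), the Hilbert--Mumford function comes out, up to a positive factor, as
\[
\mu\bigl([f_4:f_6],p\bigr)=-\min\Bigl\{\,a-2,\ \tfrac{2}{3}(b-3)\,\Bigr\}.
\]
From this the numerical criterion reads off immediately: the point is unstable iff some $p$ has $a\ge 3$ and $b\ge 4$; it is stable iff every $p$ satisfies $a\le 1$ or $b\le 2$; and the remaining strictly semistable locus is where some $p$ has $a\ge 2$ and $b\ge 3$ with equality forced in at least one of the two.

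The second step is to convert these inequalities into singularity types. Here I would invoke Tate's algorithm for the discriminant $\Delta=4f_4^3+27f_6^2$: at a point $p$ the minimal Weierstrass model has a singularity of type $A_{n-1}$ (fiber $I_n$), $A_1$ (fiber $III$), or $A_2$ (fiber $IV$) exactly in the range $a\le 1$ or $b\le 2$, whereas $a\ge 2$ and $b\ge 3$ produces a $D_4$ (fiber $I_0^*$) or worse. Matching this against the weight computation gives precisely ``stable $\Longleftrightarrow$ only $A_k$ singularities,'' which is part (1).

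For part (2) the task is to locate the closed orbits inside the strictly semistable locus. A strictly semistable point is destabilized by a torus, and by the general theory its orbit closure contains a unique closed orbit represented by a torus-fixed point. Imposing torus-invariance forces $f_4=c\,(\ell_1\ell_2)^2$ and $f_6=c'\,(\ell_1\ell_2)^3$ for two linear forms $\ell_1,\ell_2$ cutting out the two fixed points, so the closed orbits form the one-parameter family $[c:c']\in\P(2,3)$. I would then split this family according to $\Delta$: when $4c^3+27c'^2\ne0$ one has $\mathrm{ord}(\Delta)=6$ at each of the two points, giving a normal surface with exactly two $D_4$ singularities; when $4c^3+27c'^2=0$ the discriminant vanishes identically, the cubic in $z$ acquires a double root along the whole base, and the surface becomes non-normal---this is exactly the point $p_0$, recognized via the coordinate change $\ell_1\ell_2=x^2+y^2$. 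Finally I would check that these orbits really are closed (their orbits are affine, being essentially of the form $SL(2;\C)/N(T)$) and that no other semistable orbit is closed, so that these exhaust the strictly polystable points.

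The main obstacle will be the polystability step rather than the numerical inequalities, which are routine once the weighting is fixed. Two points need care: getting the weight normalization in the weighted projective space $\P_s$ correct, so that the neutral case lands exactly at $a=2,b=3$ (fiber $I_0^*$); and proving that the special ratio $4c^3+27c'^2=0$ gives the unique non-normal polystable surface and that it is genuinely distinguished from the two-$D_4$ locus rather than $SL(2;\C)$-equivalent to it. This is the analogue for $\P(1,1,2)$ of Shah's Theorem 4.3 for $\P(1,1,4)$ \cite{Shah}, and I would follow his closed-orbit analysis closely.
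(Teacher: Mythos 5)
Your proposal is correct and takes essentially the same route as the paper's proof: the Hilbert--Mumford criterion reduces everything to the pair of multiplicities $(\mathrm{ord}_p f_4,\mathrm{ord}_p f_6)$, and the strictly polystable orbits are identified as the torus-fixed normal forms $f_4=c(\ell_1\ell_2)^2$, $f_6=c'(\ell_1\ell_2)^3$, split into the two-$D_4$ surfaces and the non-normal $p_0$ according to whether $4c^3+27c'^2$ vanishes. The only (harmless) difference is that you translate the multiplicity conditions into singularity types via Tate's algorithm for the associated Weierstrass fibration rather than by inspecting the multiple points of the branch sextic directly; this is exactly the equivalent formulation via rational elliptic surfaces that the paper attributes to Miranda in the remark immediately following the lemma, and your closed-orbit analysis supplies details the paper leaves as ``it is not hard to see.''
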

\begin{proof} By the numerical criterion, a point $f=[f_4:f_6]$ is unstable if and only if there is  a point $u\in \P^1(x, y)$ such that $f_4$ and $f_6$ has multiplicity bigger than two and three at $u$ respectively. Without loss of generality, we may assume $u=[1:0]$, so that $y^3$ divides $f_4$ and $y^4$ divides $f_6$. Then it is easy to see that the corresponding sextic has  a triple point at $u$, with unibranch (i.e. a unique 
tangent line). So the surface $X_f$ has an $E_k$ or worse singularity. Conversely if $X_f$ has a  singularity of type $E_k$ or worse, then by multiplying by an element in $SL(2;\C)$ we may assume the singularity is of the form $[1:0:z_0]\in \P(1,1,2)$. In the affine chart where $x\neq0$,  the sextic is of the form $z^3+z f_4(1,y)+f_6(1, y)$.  It is easy to see that the only triple point must have $y=z=0$. Then it follows that $[f_4: f_6]$ is unstable. Similarly, it is easy to see that $X_f$ is stable if and only if it contains at worst $A_k$ singularities, i.e.  the sextic contains at worst double points.  If $X_f$ is polystable, then $[f_4: f_6]$ must be in the $SL(2;\C)$ orbit of $[ax^2y^2: bx^3y^3]$ for some non zero $[a:b]\in \P(2, 3)$. It is not hard to see that for $[a:b]\in \P(2, 3)$ not equal to $[-1/3: 2/27]$,  $X_f$ has exactly two $D_4$ singularities.
\end{proof}

\begin{rmk}
We remark that, in the context of rational elliptic surfaces (which is the
blow up of the base point of a complete anti-canonical system of degree $1$ Del Pezzo surface), Miranda \cite{Mir}
also analyzed the equivalent GIT stability and constructed 
the corresponding compactified moduli variety which is isomorphic to our $M_1'$. 
\end{rmk}

\subsubsection{Second step: Blow up}\label{dP1.blup.ss}

  For the compatibility with later discussions, 
we replace characters $x,y,z,w$ by $x',y',z',w'$ for the homogeneous coordinates for 
$\P(1,1,2,3)$. Recall that in the statement of Theorem \ref{double cover classification} when the Gromov-Hausdorff limit is the double cover of $\P(1,1,2)$, the branch locus could pass the vertex. 
This corresponds to the $z'^3$ term vanishing in the statement$F(x',y',z')$. 
 By Lemma \ref{quotient singularity}, it is easy to see that if we want the surface to have only quotient singularities, there must be a term of the form $z'^2f_2(x', y')$ where $f_2$ must have rank at least one.  On the other hand, for these surfaces,   there is no obvious reason that  they do not appear as the Gromov-Hausdorff limit of K\"ahler-Einstein surfaces. 
 Indeed we have explicit examples of such surfaces which admit K\"ahler-Einstein metric. The first is a one dimensional family of degree one K\"ahler-Einstein log Del Pezzo surfaces which are Gorenstein except one whose $f_2$ is rank two. 
 
 \begin{exa}
 We consider  a $\Z/2\Z$ action on the family of degree two surfaces $X_2^\lambda$ as studied in the end of Section \ref{degree two case}. The action is given by $[x:y:z:w]\mapsto [x:y:-z:-w]$. The fixed points are exactly the singularities of $X_2^\lambda$.   One can check that  for $\lambda \neq [1:0], [0:1]$, the quotient $X_1^\lambda$ is a degree one log Del Pezzo surface with exactly two $D_4$ singularities. It is interesting that these surfaces admit a $\C^*$ action and correspond exactly to the polystable points in $M_1'$, except $p_0$.  From the discussion in the end of Section \ref{degree two case} we see they all admit K\"ahler-Einstein metrics. 
 
 For the surface $X_2^\infty$ the action fixes also the $A_1$ singularity $[0:0:1:0]$, so the quotient $X_1^\infty$ has two $D_4$ singularities and one $\frac{1}{4}(1,1)$ singularity. Denote the embedding $\P(1,1,2)\hookrightarrow \P^3$ by $[x':y':z']\mapsto [z': x'^2: x'y': y'^2]$. Then the bi-anti-canonical map realizes $X_1^\infty$ as a double cover of $\P(1,1,2)\subset \P^3$ branched along the curve isomorphic to $z'^2x'y'+x'^3y'^3$. Indeed, $|-2K_{X_1^{\infty}}|=|-2K_{X_2^{\infty}}|^{\mathbb{Z}/2\mathbb{Z}}=|\mathcal{O}_{\mathbb{P}(1,1,1,2)}(2)^{\mathbb{Z}/2\mathbb{Z}}|$ which is spanned by $x^2, xy, y^2, z^2$ 
so  the branch locus is $xyz(z-xy)$.  The latter is isomorphic to the sextic described above. 

So $X_1^\infty$ corresponds to the case that $f_2$ has rank two.  Clearly $X_1^\infty$ admits a K\"ahler-Einstein metric, as a global quotient of $\P^1\times \P^1$. 
 \end{exa}
 
 The next example, which will be important in our further modification, is a degree one K\"ahler-Einstein log Del Pezzo surface which corresponds to $f_2$ being rank one. 
 
 \begin{exa}\label{d1new} Consider the degree two surface $X_2^{\gamma_0}$ with $\gamma_0=[1:-1]$. 
 It has two $A_3$ singularities, one at $[1:0:0:0]$ and one at $[0:1:0:0]$.  Now consider the involution  $\sigma: X_2^{\gamma_0}\rightarrow X_2^{\gamma_0}$ which sends $[x:y:z:w]$ to $[x:-y:-z:-w]$. Then $\sigma$ has two fixed points exactly at the two singularities. It is straightforward to check that the quotient, which we will denote by $X_1^e$ from now on, has one $A_7$ singularity and one $\frac{1}{8}(1,3)$ singularity. 
$|-2K_{X_1^e}|$ is determined by the sections $\{x^2, y^2, yz, z^2\}\in H^0(\P(1,1,1,2), \O(2))$, and this defines a double covering map from $X_1^e$ to the quadric cone in $\P^3$. 
 The corresponding involution $\sigma$ maps $[x:y:z:w]$ to $[-x:-y:-z:-w]=[-x: y:z:w]$ (the identity holds on $X_1^e$). Then the fixed locus of $\sigma$ consists of the curve $w=0$ and the curve $x=0$. Denote again the embedding $\P(1,1,2)\hookrightarrow \P^3$ by $[x':y':z']\mapsto [z': x'^2: x'y': y'^2]$. The branch locus in $\P(1,1,2)$ is isomorphic to the sextic $z'^2x'^2-z'y'^4=0$. So $X_1^e$ corresponds to that $f_2$ has rank one.  Again $X_1^e$ admits a K\"ahler-Einstein metric by the discussion in the end of Section \ref{degree two case}. 
 \end{exa}

 We have a refined classification than Corollary \ref{equation degree one}. 
 
 \begin{lem} \label{Z8 is unique}
Let $X_\infty$ be the Gromov-Hausdorff limit of a sequence of degree one K\"ahler-Einstein Del Pezzo surfaces. If it is a hypersurface in  $\P(1,1,2, 3)$ of the form   $w^2=F_6(x, y, z)$, then either $F_6$ has a term $z^3$, or $F_6$ is equivalent to $z^2(x^2+y^2)+zg_4(x, y)+g_6(x, y)$ or $X_0$ is isomorphic to $X_1^e$.  
\end{lem}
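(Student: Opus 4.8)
The plan is to read the three alternatives off the local structure of $X_\infty$ at the vertex $[0:0:1:0]$, disposing of the first two cases by a normal form for the quadratic part of $F_6$ and treating the remaining (rank one) case by an explicit one-parameter degeneration onto $X_1^e$. First I would write the weighted-homogeneous sextic as
\[
F_6 = c_0 z^3 + z^2 a(x,y) + z\,b(x,y) + c(x,y),
\]
with $a,b,c$ binary forms of degrees $2,4,6$. If $c_0\neq 0$ we are in the first alternative, so assume $c_0=0$. Then $[0:0:1:0]\in X_\infty$, and in the chart $z=1$ the germ of $X_\infty$ there is $\{w^2 = a+b+c\}/(\Z/2\Z)$, with the generator acting by $(x,y,w)\mapsto(-x,-y,-w)$. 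Since $X_\infty$ has only quotient singularities (Proposition \ref{orbifold compactness}, Theorem \ref{T-singularity}), Lemma \ref{quotient singularity} forces $a+b+c$ to contain a monomial of degree $\leq 3$; as $b,c$ have order $\geq 4$ this gives $a\neq 0$. If $a$ has rank two, an $SL(2;\C)$-change of $x,y$ (together with a rescaling absorbed into $b,c$) brings it to $x^2+y^2$, which is the second alternative.

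It remains to treat the rank one case, where after an $SL(2;\C)$-move I may take $a=x^2$. Using the translation $z\mapsto z+q(x,y)$ with $q$ a binary quadratic, each monomial of $b$ divisible by $x^2$ can be eliminated, leaving $b=b_3\,xy^3+b_4\,y^4$. Completing the square in $x$ in the vertex chart rewrites the germ as the $\Z/2\Z$-quotient of $\{w^2 = x^2 + b_4 y^4 + O(6)\}$; thus the underlying surface singularity is $A_3$ when $b_4\neq 0$ (giving the quotient $\tfrac19\cdots$ — more precisely $\tfrac18(1,3)$), whereas if $b_4=0$ the residual pure-$y$ part starts in degree $\geq 6$ and the germ is a cyclic quotient of an $A_k$ with $k\geq 5$, hence of order $\geq 12$. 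The latter is excluded by Theorem \ref{Bishop-Gromov}, since $\deg X_\infty=1$ would force $|\Gamma_p|<12$. Therefore $b_4\neq 0$, and we have normalized $F_6 = z^2x^2 + z\,b(x,y) + c(x,y)$ with the $y^4$-coefficient of $b$ nonzero.

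Finally I would exhibit the degeneration. The one-parameter subgroup $\rho(t)\colon (x,y,z,w)\mapsto (tx,\,y,\,t^{-2}z,\,t^{-1}w)$ of $\operatorname{Aut}(\P(1,1,2,3))$ assigns weight $-2$ to each of $w^2,\ z^2x^2,\ zy^4$, weight $-1$ to $z\,xy^3$, and weight $6-i\geq 0$ to the monomial $x^{6-i}y^i$ of $c$; all these last weights are strictly larger than $-2$, so
\[
\lim_{t\to 0}\rho(t)\cdot X_\infty = \{\,w^2 = z^2x^2 + b_4\,zy^4\,\},
\]
which after rescaling $b_4$ and the substitution $z\mapsto -z$ is precisely $X_1^e$ of Example \ref{d1new}. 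Being a Gromov-Hausdorff limit, $X_\infty$ carries a K\"ahler-Einstein metric, hence is K-polystable by Theorem \ref{KEtoKstability}; likewise $X_1^e$ is K-polystable. A K-polystable Fano has closed orbit in the relevant GIT/CM picture, so the test configuration $\rho$, whose central fibre is the K-polystable $X_1^e$, must be a product; therefore $X_\infty\cong X_1^e$.

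The main obstacle is exactly this last, rank one, step: the quotient-singularity and Bishop-Gromov constraints alone do \emph{not} pin down $c$, since for generic $c$ the surface $\{w^2=z^2x^2+zy^4+c\}$ is still a degree-one log Del Pezzo carrying a single $\tfrac18(1,3)$ point. The essential extra input is the global K-polystability of Gromov-Hausdorff limits, used through the explicit degeneration; making the ``closed orbit'' conclusion rigorous — placing $X_\infty$ and $X_1^e$ in a common GIT picture via the CM line bundle (Theorem \ref{CM stability}) so that polystable points are closed orbits — is where the care is needed. A secondary technical point is the precise identification, when $b_4=0$, of the vertex germ as a cyclic quotient singularity of order $\geq 12$, which rests on tracking the $\Z/2\Z$-action through the completion of the square.
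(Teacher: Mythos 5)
Your argument is correct and follows essentially the same route as the paper's: Lemma \ref{quotient singularity} forces the quadratic coefficient $a$ of $z^2$ to be nonzero, the rank-two case is the second alternative, and in the rank-one case Theorem \ref{Bishop-Gromov} forces the $zy^4$-coefficient to be nonzero (the paper also notes explicitly that a vanishing pure-$y$ residue would make $X_\infty$ non-normal, which your appeal to normality already covers), after which a one-parameter degeneration onto $X_1^e$ combined with K-polystability of $X_\infty$ and the vanishing of the Futaki invariant of the K\"ahler--Einstein surface $X_1^e$ yields $X_\infty\cong X_1^e$. The only differences are cosmetic: the paper normalizes $F_6$ to $z^2x^2+zy^4+f_6(x,y)$ before degenerating with $\lambda(t)=(t^2,t,1,t^2)$ rather than using your $\rho(t)$ on the unnormalized form, and the final step needs no common GIT/CM picture --- the definition of K-polystability applied to the test configuration with Donaldson--Futaki invariant equal to the (vanishing) Futaki invariant of $X_1^e$ suffices, exactly as you suspected.
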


\begin{proof}Consider the case when $F_6$ contains no $z^3$ term. Then we claim the term $z^2f_2(x,y)$ must not vanish. Otherwise $F_6=zf_4(x,y)+f_6(x,y)$.  Then in the affine chart $\{z\neq 0\}$ in $\P(1,1,2,3)$ we have equation $w^2=f_4(x,y)+f_6(x,y)$ then by the Lemma  \ref{quotient singularity},  $X_\infty$ has a non quotient singularity, so it can not be a Gromov-Hausdorff limit by Theorem \ref{orbifold compactness}. So up to equivalence we may assume the $z^2$ term in $F_6$ is of the form $z^2(x^2+y^2)$ or $z^2x^2$. In the former case we are done, so we assume the latter. Then we can write 
$$F_6(x,y, z)=z^2x^2+a zy^4+bzxf_3(x, y)+f_6(x,y). $$
Now if $a=0$, then again in the affine chart $\{z\neq 0\}$ we have equation $w^2=x^2+bxf_3(x,y)+f_6(x, y)$. Then by a change of variable at $(0, 0,0)$ we may assume it is locally equivalent to $w^2=x^2+a_1 xy^3+a_2 xy^5+a_3 y^6$. It is easy to see this is either non-normal or has a $A_i$ singularity $i\ge 5$ at the origin. The corresponding singularity on $X_0$ is a $(\Z/2\Z)$-quotient by the action $(x, y, w)\mapsto (-x, -y, -w)$. So $X_0$ is either non-normal or has an orbifold point of order at least 12, thus it can not admit a K\"ahler-Einstein metric by Theorem \ref{Bishop-Gromov}. 

So $a\neq 0$, then by a change of variables $y\mapsto y+cx$ and $z\mapsto z+g_2(x,y)$ we may assume 
$$F_6(x, y, z)=z^2x^2+zy^4+f_6(x,y). \ \ (*)$$
 $X_1^e$ is isomorphic to the surface defined by $w^2=z^2x^2+zy^4$. The one  parameter subgroup $\lambda(t)=(t^2, t, 1, t^2)$ degenerates every surface defined by $(*)$ to $X_1^e$ as $t$ tends to zero. Since $X_1^e$ admits a K\"ahler-Einstein metric, it has vanishing Futaki invariant. By Theorem \ref{KEtoKstability} we see $X_\infty$ must be isomorphic to $X_1^e$. 

\end{proof}

We first construct  a moduli space for surfaces with $f_2$ being rank two, and we will show these surfaces are parametrized exactly by a weighted blow up of  $M_1'$ at $p_0$. The surfaces are defined by
 
\begin{equation}\label{dP1.exc}
w'^2=z'^2(x'^2+y'^2)+z'g_4(x', y')+g_6(x',y'). 
\end{equation}

Similarly as before, by considering the translation $z' \mapsto z'+a_2(x',y')$ 
for certain quadric $a_2(x',y')$, we may assume $g_4$ lies in the space  $T(x', y'):=\C(x'+iy')^4\oplus \C(x'-iy')^4$, which is the $SO(2;\C)(\cong\C^*)$-invariant complement  to the linear subspace of ${\it Sym}^4(\C x'\oplus \C y')$ consists of those divisible by $(x'^2+y'^2)$. 
In this way, we can obtain GIT quotient $\P_e^{ss}//SO(2;\C):=\mathbb{P}(1,1,2,2,2,2,2,2,2)^{ss}//SO(2;\C)$ 
which parametrizes surfaces of the form (\ref{dP1.exc}). Here we need to specify the weight of $SO(2;\C)\cong\C^*$ on the linearization, and we choose the natural one, so the action corresponding to $(x'+iy')\mapsto \mu (x'+iy')$, $(x'-iy')\mapsto \mu^{-1}(x'-iy')$ has weight 
\begin{equation}\label{wt}
(4, -4, 6, 4, 2, 0, -2, -4, -6), 
\end{equation}
with respect to the basis consists of 
\begin{equation*}\label{bas1}
(x'+iy')^4, (x'-iy')^4, 
\end{equation*}
and 
\begin{eqnarray*}\label{bas2}
&&(x'+iy')^6, (x'+iy')^5(x'-iy'), \\&&
(x'+iy')^4(x'-iy')^2, (x'+iy')^3(x'-iy')^3, \\&&
(x'+iy')^2(x'-iy')^4, (x'+iy')(x'-iy')^5, (x'-iy')^6. 
\end{eqnarray*}

Then we have the following. 
\begin{lem}
The GIT quotient  $\P_e^{ss}//SO(2;\C)$ with respect to the action with weight (\ref{wt}) above parametrizes log Del Pezzo surfaces, i.e. a polystable sextic defined by $[g_4:g_6]\in P_e$ has only quotient singularities, or more precisely, the corresponding Del Pezzo surface has exactly one $\frac{1}{4}(1,1)$ singularity besides canonical singularities. 
\end{lem}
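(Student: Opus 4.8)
The plan is to read off the singularities of the surface $X$ directly from its description as the double cover $X\to \P(1,1,2)$ branched over the sextic $C=\{F_6=0\}$, where $F_6=z'^2(x'^2+y'^2)+z'g_4(x',y')+g_6(x',y')$, treating the vertex $v=[0:0:1]$ and the smooth locus $\P(1,1,2)\setminus\{v\}$ separately. The only non-Gorenstein point of $X$ will be $v$, and I expect it to carry the claimed $\frac{1}{4}(1,1)$ singularity, while all remaining singularities will be forced to be canonical by GIT polystability.

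First I would analyze the vertex. In the orbifold chart $z'=1$ the cover lifts to $\hat X=\{W^2=(X^2+Y^2)+g_4(X,Y)+g_6(X,Y)\}\subset\C^3$; since $g_4,g_6$ vanish to higher order, the quadratic part $W^2-X^2-Y^2$ is nondegenerate, so $\hat X$ has an $A_1$ singularity at the origin and may be written as $\C^2_{(s,t)}/\langle(s,t)\mapsto(-s,-t)\rangle$ via $X+iY=s^2$, $X-iY=t^2$, $W=st$. The residual $\Z/2\Z$ coming from the weighted projective structure acts by $(X,Y,W)\mapsto(-X,-Y,-W)$; because $F_6$ has no $z'^3$ term this action is free on a punctured neighbourhood of the origin in $\hat X$, and it lifts to $(s,t)\mapsto(is,it)$. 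Together with the $A_1$ group this generates the cyclic group $\Z/4\Z=\langle(s,t)\mapsto(is,it)\rangle$, so $X$ near $v$ is $\C^2/(\Z/4\Z)$, i.e. exactly the $\frac{1}{4}(1,1)$ singularity. The crucial input is that $f_2=x'^2+y'^2$ has full rank; this rigidity is precisely why the residual symmetry is $SO(2;\C)$, the stabilizer of $f_2$. Since $v$ is the unique point over which this happens, $X$ has exactly one such point.

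Next I would handle $\P(1,1,2)\setminus\{v\}$. There $X\to\P(1,1,2)$ is a double cover of a smooth surface, so locally $X$ is a hypersurface $w^2=f(x,y)$, hence Gorenstein, and it is singular exactly over the singular points of $C$. A two-dimensional Gorenstein quotient singularity is a rational double point, so at each such point $X$ is either canonical (an ADE singularity) or not even log terminal. Thus it suffices to prove that a polystable $C$ has only simple (ADE) singularities away from $v$; equivalently, by the criterion behind Lemma \ref{quotient singularity} and the discussion preceding Lemma \ref{degree one stable classification}, that $C$ has no unibranch triple point (or worse degeneration) there. Combined with the vertex computation this already gives that every polystable surface has only quotient singularities, with a single $\frac{1}{4}(1,1)$ point among otherwise canonical singularities.

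The final, and I expect hardest, step is the GIT bookkeeping. The fixed points of $SO(2;\C)\cong\C^*$ on $\P^1_{(x',y')}$ are the two points $[1:\pm i]$ where $f_2$ vanishes, and the one-parameter subgroup $(x'+iy')\mapsto\mu(x'+iy')$, $(x'-iy')\mapsto\mu^{-1}(x'-iy')$ concentrates $C$ at one of them as $\mu\to 0$. Running the Hilbert--Mumford numerical criterion with the weights (\ref{wt}), I would show that a sextic whose singularity over $[1:i]$ or $[1:-i]$ is worse than simple has all its nonzero coefficients on one side of the corresponding weight filtration, and is therefore destabilized by this subgroup or its inverse; conversely a polystable point is balanced, the worst singularities over the two fixed points being (by the same computation as in Lemma \ref{degree one stable classification}) of type $D_4$, and every other singularity of $C$ simple. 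The only genuine work is matching the weights (\ref{wt}) to the orders of vanishing of $g_4$ and $g_6$ at the fixed points, after which the lemma follows by combining this with the local computation at $v$.
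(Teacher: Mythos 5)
Your overall strategy is essentially the paper's: realize the surface as the double cover of $\P(1,1,2)$ branched along the sextic $z'^2(x'^2+y'^2)+z'g_4+g_6$, and check that polystability for the $SO(2;\C)$-weights forces the branch curve to have only simple singularities away from the vertex. Your explicit local computation of the $\frac{1}{4}(1,1)$ point over the vertex (the $A_1$ from the nondegenerate quadratic part, further divided by the residual $\Z/2\Z$ to give $\frac{1}{4}(1,1)$) is correct and is a welcome supplement to the paper's proof, which simply runs a terse case analysis on the coefficients $(a,b)$ of $g_4$ and asserts the singularity types.

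The one step that does not work as you describe it is the claim that the Hilbert--Mumford criterion controls the singularities of $C$ at \emph{all} points of $\P^1$. The group here is only $SO(2;\C)\cong\C^*$, not $SL(2;\C)$, so the numerical criterion only sees the weight filtrations at the two fixed points $[1:\pm i]$; unlike in Lemma \ref{degree one stable classification}, you cannot move an arbitrary candidate bad point of $\P^1$ into standard position by the group, so polystability a priori says nothing about singularities of $C$ lying over points where $x'^2+y'^2\neq 0$. What saves the argument is an elementary observation you need to add: over the locus $f_2=x'^2+y'^2\neq 0$ and away from the vertex, completing the square in $z'$ puts $C$ locally in the form $u^2=\Delta(x',y')/(4f_2^2)$ with $\Delta=g_4^2-4f_2g_6$, and $\Delta\equiv 0$ is impossible for $[g_4:g_6]\in\P_e$ (divisibility of $g_4^2$ by $f_2$ would force $g_4=0$, since $g_4$ lies in the chosen complement $T(x',y')$, and then $g_6=0$); hence $C$ has only $A_k$ ($k\leq 7$) singularities there for \emph{every} point of $\P_e$, and the GIT analysis is genuinely needed only over $[1:\pm i]$ --- which is exactly what the paper's three cases on $(a,b)$ are doing. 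With that observation inserted, your plan goes through and reproduces the paper's proof.
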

\begin{proof}
It is easy to check that if a sextic has the form $z'^2(x'^2+y'^2)+z'(a (x'+iy')^4+b(x'-iy')^4)+g_6(x', y')$ with $a, b\neq 0$, then it has only double points away from the vertex. If $a=b=0$, then for it to be stable, it has at most double points, and for it to be polystable, it has exactly two $D_4$ singularities besides the vertex. If $a\neq0$ and $b=0$, then, if it is stable, the sextic has at most double points, and if it is semistable, then it degenerates to $z'^2(x'^2+y'^2)+a (x'+iy')^3(x'-iy')^3$, which has two $D_4$ singularities. 
\end{proof}

When we prove the moduli space we constructed in the end has property (KE) we need to show:

\begin{lem} \label{CM exceptional}
A surface of the form (\ref{dP1.exc}) that admits a K\"ahler-Einstein metric must be GIT polystable with respect to the chosen linearization as above. 
\end{lem}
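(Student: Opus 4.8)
The plan is to run the same K-polystability-to-GIT-polystability comparison used in the degree two and three cases (Theorem \ref{degree two perfect}), adapted to the fact that the relevant group here is $SO(2;\C)\cong\C^*$, which carries nontrivial characters so that Theorem \ref{CM stability} does \emph{not} apply directly. Suppose $X$ is a surface of the form (\ref{dP1.exc}) carrying a K\"ahler-Einstein metric. By Theorem \ref{KEtoKstability} the pair $(X,-K_X)$ is K-polystable, and since $X$ is a normal log Del Pezzo surface it is in particular reduced. It therefore suffices to produce a CM-type comparison converting this K-polystability into GIT polystability for the $SO(2;\C)$-action on $\P_e$ with the linearization of weight (\ref{wt}). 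Because of the character obstruction, the correct tool is the affine/local statement Lemma \ref{local CM stability} rather than Theorem \ref{CM stability}.

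To set up Lemma \ref{local CM stability} I first need an $SO(2;\C)$-fixed, K-polystable centre. The natural choice is the surface $X_0$ given by $g_4=0$ and $g_6=(x'^2+y'^2)^3$, i.e. $w'^2=z'^2(x'^2+y'^2)+(x'^2+y'^2)^3$; up to a linear change of the variables $(x',y')$ this is exactly the surface $X_1^\infty$ of the previous examples, which admits a K\"ahler-Einstein metric as a global quotient of $\P^1\times\P^1$ and is hence K-polystable by Theorem \ref{KEtoKstability}. Since $(x'^2+y'^2)^3=(x'+iy')^3(x'-iy')^3$ is precisely the weight-$0$ basis vector in (\ref{wt}), the point $X_0$ is fixed by $SO(2;\C)$ and sits at the balanced ``centre'' of the GIT picture. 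I would then take $S$ to be the $\C^*$-invariant affine slice obtained by normalizing the weight-$0$ coefficient of $g_6$ to be $1$ and letting the remaining eight coordinates (of weights $4,-4,6,4,2,-2,-4,-6$) vary freely; its origin is $X_0$, and $\C^*$ fixes it.

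Over $S$ I would construct an $SO(2;\C)$-equivariant $\Q$-Gorenstein flat family of log Del Pezzo surfaces of the form (\ref{dP1.exc}), realizing each as the appropriate double cover; the $\Q$-Gorenstein structure is controlled by the $T$-singularity $\frac14(1,1)$ and its smooth semi-universal $\Q$-Gorenstein deformation (Lemma \ref{local no obstruction}), exactly as in the family underlying the degree two exceptional divisor. All fibres are reduced equidimensional surfaces, and the central fibre $X_0$ is K-polystable. Lemma \ref{local CM stability} then yields: any fibre $X_s$ that is K-polystable and reduced is GIT polystable as a point of $S$. Applying this to our $X$ (which, after acting by $SO(2;\C)$ and the overall scaling, we may assume lies in $S$) gives polystability in the slice, and by the standard dictionary between the Luna slice at the fixed point $X_0$ and the $SO(2;\C)$-GIT on $\P_e$, slice-polystability is equivalent to GIT polystability of $[X]\in\P_e$ with respect to the linearization (\ref{wt}). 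This is the desired conclusion.

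The main obstacle, as I see it, is not the K-polystability input (which is immediate from Theorem \ref{KEtoKstability} and the explicit quotient description of $X_0$) but the careful set-up feeding into Lemma \ref{local CM stability}: constructing the $SO(2;\C)$-equivariant $\Q$-Gorenstein family over the affine slice with $X_0$ as reduced K-polystable central fibre, and---most delicately---checking that the linearization induced on the slice matches the chosen weights (\ref{wt}), so that slice-polystability really does translate into weighted-projective polystability on $\P_e$. The two $\C^*$'s in play (the weighted-projective scaling and the $SO(2;\C)$-action) and the weighted-projective chart bookkeeping are where the argument requires genuine care; once the equivariant family and the linearization are correctly aligned, the conclusion is formal from Lemma \ref{local CM stability}.
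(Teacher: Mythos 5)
Your overall strategy --- recognizing that the nontrivial character of $SO(2;\C)\cong\C^*$ obstructs a direct application of Theorem \ref{CM stability}, and bringing in the explicitly K\"ahler-Einstein surface $X_1^\infty$ (the point $v=[0:0:0:0:0:1:0:0:0]$ of $\P_e$, supported on the weight-zero eigenvector $(x'+iy')^3(x'-iy')^3$) as the anchor --- matches exactly the two inputs the paper uses. But the route through Lemma \ref{local CM stability} has a genuine coverage gap. Your affine slice $S$ is the chart obtained by normalizing the weight-zero coefficient of $g_6$ to $1$; a surface of the form (\ref{dP1.exc}) can be moved into this chart only if that coefficient is nonzero, and neither the $SO(2;\C)$-action (which is the torus itself, hence preserves the weight decomposition) nor the weighted-projective rescaling (which multiplies every quasi-homogeneous coordinate by a positive power of the scalar) can turn a vanishing weight-zero coefficient into a nonvanishing one. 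So the step ``after acting by $SO(2;\C)$ and the overall scaling, we may assume $X$ lies in $S$'' fails in general: for instance, surfaces with $g_4=a(x'+iy')^4+b(x'-iy')^4$, $a,b\neq 0$, and $g_6$ having zero component on $(x'+iy')^3(x'-iy')^3$ are perfectly good (indeed GIT polystable) log Del Pezzo surfaces of the form (\ref{dP1.exc}) that never enter your slice, and the lemma must apply to them should they arise as Gromov--Hausdorff limits. Nor can you repair this by re-centering: Lemma \ref{local CM stability} requires an $SO(2;\C)$-fixed, K-polystable centre, and $X_1^\infty$ is the only such point available in $\P_e$.

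The fix --- and the paper's actual argument --- is to use $X_1^\infty$ not as the centre of a local slice but to pin down the \emph{global} linearization. The CM line bundle on $\P_e$ is $\O(k)$ with $k>0$ (by the degree computation in the proof of Theorem \ref{CM stability}, applied to a K-polystable point with a nontrivial degeneration in $\P_e$), and its $\C^*$-linearization can differ from (\ref{wt}) only by a character. Since $v$ is K-polystable it must be GIT polystable for the CM linearization; but a point supported purely on the weight-zero eigenvector is polystable for a character-twisted linearization only if the twist is trivial. Hence the CM linearization coincides with the natural one (\ref{wt}) on all of $\P_e$, and the comparison of Donaldson--Futaki invariants with GIT weights for this ample, correctly linearized line bundle then yields polystability of \emph{every} K-polystable point of $\P_e$, with no restriction to a chart. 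Your computation on the chart $\{v_6\neq 0\}$ is essentially correct as far as it goes, but you need this global step to obtain the lemma as stated.
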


\begin{proof} 
 This does not follow directly from the general Theorem \ref{CM stability}, as the group $SO(2;\C)\cong\C^*$ has non trivial characters. But in our case this can be done by explicit analysis as follows. 
Notice that since $\P_e$ contains a point parametrizing a K-polystable log Del Pezzo surface (e.g. $X_1^\infty$), the CM line bundle must be isomorphic to $\O(k)$ for $k>0$. This follows from the proof of Theorem \ref{CM stability}. $X_1^\infty$ corresponds to the vector $v=[0:0:0:0:0:1:0:0:0]$ in $\P_e$ with respect to the quasi-homogeneous 
coordinates as above. So the weight of the action on the CM line bundle must also be the natural one as above, for otherwise it is easy to see that $v$ is unstable. 
\end{proof}

The second step toward the construction of $M_1$ is to replace the point 
$[p_0]\in M_1'$ (which corresponds to a non-normal surface) by the above GIT quotient. 

\begin{thm}
There is a blow up $M_1''\rightarrow M_1'$ at $[p_0]$ (with a non-reduced ideal) so that $M_1''$ is an analytic moduli space for degree one log Del Pezzo surfaces. The exceptional divisor $E$ is isomorphic to $\P_e^{ss}//SO(2;\C)$. Moreover, a point $s\in M_1''$ parametrizes the polystable sextic hypersurface $X_s$ defined by it, and $s\in E$ if and only if  the sextic passes through the vertex $[0:0:1]$.
\end{thm}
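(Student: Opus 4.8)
The plan is to follow the blow-up construction of the degree two case (the theorem in Section \ref{degree two case}) as closely as possible, with $p_0$ in the role of the double conic and $SO(2;\C)$ in the role of $\text{Aut}(C)=PGL(2;\C)$. First I would analyze the local geometry at $p_0$. Writing $u=x^2+y^2$, the non-normal surface attached to $p_0$ is $w^2=z^3-\tfrac13 u^2 z+\tfrac{2}{27}u^3=(z-\tfrac13 u)^2(z+\tfrac23 u)$, whose branch divisor carries a double component; its automorphism group contains the torus $SO(2;\C)\cong\C^*$ preserving $u$, and this is precisely the reductive group governing the local GIT, matching the $SO(2;\C)$ in $\P_e^{ss}/\!/SO(2;\C)$. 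By the Luna \'etale slice theorem there is an $SO(2;\C)$-equivariant slice through $p_0$ in $\P_s$, and the induced \'etale map identifies a neighbourhood of the image of $0$ with a neighbourhood of $[p_0]\in M_1'=\P_s^{ss}/\!/SL(2;\C)$. The deformations of $p_0$ transverse to its orbit, after absorbing the translation $z\mapsto z+a_2(x,y)$, are exactly the pairs $(g_4,g_6)$ appearing in the normal form (\ref{dP1.exc}); this identifies the projectivized normal directions with the weighted space $\P_e=\P(1,1,2,\dots,2)$ carrying the $SO(2;\C)$-action of weight (\ref{wt}).

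Second, I would build the interpolating $\Q$-Gorenstein family over the blow-up. Since the ambient $\P(1,1,2)$ does not change here (unlike the degeneration $\P^2\rightsquigarrow\P(1,1,4)$ in degree two), the construction is more direct: inside a fixed $\P(1,1,2,3)$-bundle over the cone on the blow-up consider
\begin{equation*}
w^2=s\,z^3+z^2(x^2+y^2)+z\,g_4(x,y)+g_6(x,y),
\end{equation*}
where $s$ is the blow-up parameter. For $s\neq 0$ one rescales $z$ (and absorbs the $z^2u$ term into a shift) to recover the normal form $w^2=z^3+zf_4+f_6$ of a surface whose branch sextic avoids the vertex, while the fibre over $s=0$ is the vertex-passing surface (\ref{dP1.exc}), with its single $\tfrac14(1,1)$ point at the vertex of $\P(1,1,2)$. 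Carrying this out $SO(2;\C)\times\C^*$-equivariantly over the cone and then dividing by the $\C^*$ rescaling $(g_4,g_6)$ yields an $SO(2;\C)$-equivariant $\Q$-Gorenstein flat family of degree one log Del Pezzo surfaces over the blow-up. Because the coordinates $f_4,f_6$ carry the $\C^*$-weights $4$ and $6$ (equivalently $2,3$ on $\P_s$), the blow-up is a weighted one, which on the quotient $M_1'$ is realised by blowing up a non-reduced ideal supported at $[p_0]$; this is the origin of the non-reduced ideal in the statement.

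Third, I would run GIT on the blow-up with the $SO(2;\C)$-linearization $\mathcal{O}(-E)$ carrying the natural weight (\ref{wt}), just as the degree two argument uses $\mathcal{O}_{\mathbb{B}}(-\mathbb{E})$. As in \cite{Shah}, the local GIT picture shows that the quotient of the slice maps \'etale onto $M_1'$ near $[p_0]$, so the blow-up on the slice descends to a blow-up $M_1''\to M_1'$ whose exceptional divisor is the GIT quotient $\P_e^{ss}/\!/SO(2;\C)$. A point lies in $E$ precisely when its surface is the $s=0$ fibre, i.e.\ when the branch sextic passes through the vertex, which gives the last assertion. Finally, to check that $M_1''$ is an analytic moduli space I would verify axiom (2): for any $[s]$ the Luna slice (in $\P_s$ if $s\notin E$, in the blow-up if $s\in E$) together with versality produces an $\Aut(X_s)$-equivariant finite surjection from a neighbourhood of $[s]$ to a neighbourhood of $0\in\Kur(X_s)/\!/\Aut(X_s)$, exactly as in the degree two proof and using Lemma \ref{local-global}.

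The \emph{main obstacle} I expect is twofold. First, one must confirm that the interpolating family is genuinely $\Q$-Gorenstein across $E$, where the $\tfrac14(1,1)$ singularity appears; this is the analogue of controlling the locus of $\P(1,1,4)$ in degree two, and is handled by the fact that $\tfrac14(1,1)$ is a $T$-singularity with a one-dimensional smooth $\Q$-Gorenstein semi-universal deformation, forcing its locus to be a Cartier divisor so the construction can be reversed fibrewise. Second, unlike the degree two case the group $SO(2;\C)\cong\C^*$ has nontrivial characters, so GIT is sensitive to the weight of the linearization; as shown in Lemma \ref{CM exceptional}, the presence of the K-polystable surface $X_1^\infty$ forces the natural weight (\ref{wt}) and guarantees $\P_e^{ss}$ is non-empty and behaves as required. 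Pinning down these two points is the delicate part; the remaining bookkeeping parallels the degree two construction line by line.
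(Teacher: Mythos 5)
Your proposal follows the paper's construction essentially step for step: the same interpolating family $w^2=t\,z^3+z^2(x^2+y^2)+z\,g_4+g_6$ over the cone on the blow-up, the same rescaling/translation to recover the normal form $w^2=z^3+zf_4+f_6$ away from the exceptional locus, the same $\C^*$-equivariant descent to a weighted blow-up with exceptional divisor $\P_e$, the same $SO(2;\C)$-GIT descent to $M_1''\to M_1'$, and the same Luna-slice versality check, with the two delicate points ($\Q$-Gorensteinness across $E$ and the choice of linearization for the $\C^*$ with nontrivial characters) correctly identified. The only small imprecision is your attribution of the non-reduced ideal to the weights $4,6$ of $f_4,f_6$: the relevant weights are $(1,2)$ on the $(g_4,g_6)$-directions, coming from the rates $t$ and $t^2$ at which those directions approach $p_0$ under the change of variables (equivalently from the action $\lambda\cdot(t,g_4,g_6)=(\lambda^{-1}t,\lambda g_4,\lambda^2 g_6)$), which is exactly what produces $E\cong\P(1,1,2,\dots,2)=\P_e$ as you state; this does not affect the argument.
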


\begin{proof}
Let $\tilde{\mathbb{A}}\simeq {\it Sym}^4(\C x\oplus \C y)\oplus {\it Sym}^6(\C x
\oplus \C y)$ be the cone over $\mathbb{P}_s$. In the tangent space at the point $p_0=(-\frac{1}{3}(x^2+y^2)^2, \frac{2}{27}(x^2+y^2)^3)$, we  take an $SO(2;\C)$-invariant Luna \'etale 
slice $\mathbb{A}_f:=p_0+\{T(x, y)\oplus {\it Sym}^6(\C x\oplus \C y)\}$ in $\tilde{\mathbb{A}}$.
To include surfaces of the form (\ref{dP1.exc}), let $\A_g=T(x', y')\oplus {\it Sym}^6(\C x'\oplus \C y')$, and  we consider the family of surfaces over $\A_g\times \C^*$ where we associate $(g_4, g_6, t)$ the sextic 
\begin{equation} \label{sextic}
tz'^3+z'^2(x'^2+y'^2)+z'g_4(x',y')+g_6(x',y').
\end{equation}
Making the change of variable 
$$x':=tx, y':=ty, z':=z-\frac{t}{3}(x^2+y^2), $$
and 
$$f_4(x, y)=-\frac{t^2}{3}(x^2+y^2)^2+ t^3 g_4(x,y); $$
$$f_6(x, y)=\frac{2t^3}{27}(x^2+y^2)^3-\frac{t^4}{3}(x^2+y^2)g_4(x,y)+t^5g_6(x,y), $$
the sextic in equation (\ref{sextic}) is then transformed into  the form
$$t[z^3+f_4(x, y)z+f_6(x, y)]. $$
Hence it corresponds to the point $[f_4(x, y): f_6(x,y)]\in \A_f\subseteq \P_s$. If we keep $g_4$ and $g_6$ fixed, and let $t$ tend to zero this converges exactly to the point $p_0$.

The equation (\ref{sextic}) defines a family of sextics over the trivial $\P_{x', y', z'}(1,1,2)$ bundle $\mathcal{P}'$ over $\A_g\times \C^*$, and it extends obviously over $\A_g\times\C$, which is the cone over the blow up $\B_g$ of $\A_g$ at $0$.  This family is invariant under $\C^*$ action $\lambda. (t, g_4, g_6):=(\lambda^{-1}t, \lambda g_4,  \lambda^2 g_6)$, and thus 
descended to a family over $\B_g$.
 The above change of variables indeed defines an isomorphism $\Psi$ between $\mathcal{P}=\P_{x,y,z}(1,1,2)\times (\A_f\times\C^*)$, and induces a $\C^*$ action on $\A_f$. 
 We decompose $\A_f$ as $\A_f=p_0+(L_1\oplus L_2)$, where 
 $$L_1:=\{(f_4(x,y), -\frac{1}{3} (x^2+y^2)f_4(x,y))\}\mid f_4\in T_{(x,y)}\}, $$and 
$$L_2:= {\it Sym}^6(\C x\oplus \C y)\subset \A_f. $$ 
Denote the associated ideals of $L_i+p_0$ in $\mathbb{A}_f$ by 
$I_{(L_i+p_0)}$. Then we define $\B_f$ to be the  blow up of $\mathbb A_f$ at $I_{(L_1+p_0)}^2+I_{(L_2+p_0)}$. The exceptional divisor is isomorphic to $\P_e$.
Then by pulling back by $\Psi$ we obtain a flat family of sextics over $\B_f$, and the exceptional divisor parametrizes sextics of the form (\ref{dP1.exc}).

Similarly to the degree $2$ case, we consider GIT of  $\B_f$ 
with respect to the $SO(2;\C)$-action, and get 
a certain blow up $\B_f^{ss}//SO(2;\C)\rightarrow \A_f//SO(2;\C)$. 
This induces a blow up of 
$\mathbb{P}_s//SL(2;\C)$ at $[p_0]$, with exceptional divisor $E\cong \P_e^{ss}//SO(2;\C)$. We denote this by $M_1''\rightarrow M_1'$. 

From the construction, as in the previous section, $M_1''$ is an analytic moduli space and a coarse moduli of an algebraic stack which is constructed by gluing 
$$[\B_f^{ss}/SO(2;\C)]$$ naturally with 
$$[(\mathbb{P}_s^{ss}\setminus (PGL(2;\C).p_0))/PGL(2;\C)]$$ in our context. 
\end{proof}

\subsubsection{Construction of moduli: further modifications}

We have a further refinement of Corollary \ref{equation degree one}, parallel to Lemma \ref{Z8 is unique}.

\begin{lem} \label{toric is unique}
Let $X_\infty$ be the Gromov-Hausdorff limit  of a sequence of degree one K\"ahler-Einstein Del Pezzo surfaces. Then $X_\infty$ is a sextic hypersurface in $\P(1,1, 2, 3)$ of the form  $x_4^2=f_6(x_1, x_2, x_3)$, or isomorphic to the toric surface $X_1^T$. 
\end{lem}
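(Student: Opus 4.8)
The plan is to mirror the strategy of Lemma \ref{Z8 is unique}: first put $X_\infty$ in a normal form via Corollary \ref{equation degree one}, then exhibit a one-parameter degeneration onto $X_1^T$, and finally use K-polystability to promote the degeneration to an isomorphism. By Corollary \ref{equation degree one}, $X_\infty$ is either a sextic $x_4^2=f_6(x_1,x_2,x_3)$ in $\P(1,1,2,3)$, in which case we are already in the first alternative and there is nothing to prove, or a degree $18$ hypersurface $\{x_3^2+x_4^2=f_{18}(x_1,x_2)\}\subset \P(1,2,9,9)$ (weights $1,2,9,9$). So I would assume the latter and write $f_{18}=\sum_{a+2b=18}c_{ab}\,x_1^ax_2^b$. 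The point to keep in mind is that $X_1^T$ is exactly the surface $\{x_3^2+x_4^2=x_2^9\}$: this is the form $x_3x_4=x_2^9$ of Example \ref{degree1 toric} after the linear change of the weight-$9$ coordinates diagonalizing the left-hand side.

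The key step is the test configuration coming from the $\C^*$-action $\lambda(t)\cdot(x_1,x_2,x_3,x_4)=(tx_1,x_2,x_3,x_4)$ on $\P(1,2,9,9)$, under which $f_{18}$ becomes $\sum c_{ab}\,t^a x_1^ax_2^b$. Assuming first that the coefficient $c_{0,9}$ of $x_2^9$ is nonzero, the flat limit of $\lambda(t)\cdot X_\infty$ as $t\to 0$ is $\{x_3^2+x_4^2=c_{0,9}x_2^9\}\cong X_1^T$. This is a $\Q$-Gorenstein test configuration for $X_\infty$ with central fibre $X_1^T$.

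I would then conclude exactly as at the end of Lemma \ref{Z8 is unique}. Being a Gromov-Hausdorff limit of smooth K\"ahler-Einstein Del Pezzo surfaces, $X_\infty$ is K\"ahler-Einstein (Proposition \ref{orbifold compactness}), hence K-polystable by Theorem \ref{KEtoKstability}. On the other hand $X_1^T$ carries the K\"ahler-Einstein metric descending from the Fubini-Study metric (Example \ref{degree1 toric}), so its Futaki invariant vanishes, and therefore the Donaldson-Futaki invariant of the test configuration above is zero. K-polystability of $X_\infty$ then forces this test configuration to be a product, whence $X_\infty\cong X_1^T$.

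The remaining, and main, difficulty is the degenerate case $c_{0,9}=0$, that is, when $x_2^9$ does not occur in $f_{18}$; equivalently $[0:1:0:0]\in X_\infty$. This cannot be removed by an ambient coordinate change, since the substitutions preserving the normal form (namely $x_2\mapsto x_2+\alpha x_1^2$, the scalings, the shifts $x_3\mapsto x_3+p(x_1,x_2)$, and the $O(2)$-action on $(x_3,x_4)$) all fix $c_{0,9}$, and the resulting surface is genuinely different from $X_1^T$: a local computation at $[0:1:0:0]$ shows that when the next coefficient $c_{2,8}$ is nonzero the $\Z/2\Z$-quotient of the resulting node is a $\frac{1}{4}(1,1)$ point, a singularity absent on $X_1^T$. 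To exclude this I would show that such an $X_\infty$ is in fact K-unstable, contradicting the hypothesis that it is a Gromov-Hausdorff limit. Concretely, I would degenerate $X_\infty$ by the toric one-parameter subgroup singling out the $x_1$-extremal monomial of $f_{18}$, obtaining a toric surface $\{x_3^2+x_4^2=x_1^{a_0}x_2^{b_0}\}$ with $a_0\geq 2$ (this is toric after writing $x_3^2+x_4^2=(x_3+ix_4)(x_3-ix_4)$), and then exhibit, by choosing the weights of the degenerating $\C^*$ so that the induced Futaki pairing against the off-centre barycenter of its moment polytope is negative (comparing $\lambda$ and $\lambda^{-1}$ as in \cite{Wan}), a test configuration of $X_\infty$ with negative Donaldson-Futaki invariant. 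The hard part will be checking that one genuinely obtains a destabilizing configuration for $X_\infty$ itself — and not merely a K-unstable central fibre, which by lower semicontinuity of the invariants is compatible with $X_\infty$ being K-polystable — so this numerical verification is the technical heart of the lemma; alternatively one might hope to rule out $c_{0,9}=0$ directly from the refined branch data already isolated in Theorem \ref{double cover classification}.
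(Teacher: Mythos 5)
Your treatment of the first alternative and of the case $c_{0,9}\neq 0$ is essentially the paper's argument: degenerate to $\{x_3^2+x_4^2=c_{0,9}x_2^9\}\cong X_1^T$ by a one-parameter subgroup, observe that the central fibre is K\"ahler--Einstein so the Donaldson--Futaki invariant vanishes, and invoke K-polystability of $X_\infty$ to conclude the configuration is a product. (Only a minor orientation point: with your $\lambda(t)\cdot x_1=tx_1$ you must take the flat limit as $t\to\infty$, or use the inverse weight, since as $t\to 0$ the surviving monomial is $x_1^{18}$.)

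The genuine gap is the case $c_{0,9}=0$, which you explicitly leave open, and your proposed route cannot be completed as stated: degenerating first to a toric surface $\{x_3^2+x_4^2=x_1^{a_0}x_2^{b_0}\}$ and then destabilizing \emph{that} surface only shows a degeneration of $X_\infty$ is K-unstable, which — as you yourself note — is compatible with $X_\infty$ being K-polystable. Your fallback of reading the exclusion off Theorem \ref{double cover classification} also fails: that theorem only forbids the branch curve from passing through $[0:0:1]$, not through $[0:1:0]$, and the resulting $\frac{1}{4}(1,1)$ point at $[0:1:0:0]$ is on the list of singularities allowed by Theorem \ref{Bishop-Gromov}, so no local singularity argument is available. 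The paper's resolution is a \emph{one-step} degeneration of $X_\infty$ itself: the one-parameter subgroup acting with weight $(0,0,1,1)$ on $(x_1,x_2,x_3,x_4)$ degenerates $x_4^2-x_3^2=g_{18}(x_1,x_2)$ to the non-normal surface $Y=\{x_4^2=x_3^2\}$, a union of two copies of $\P(1,2,9)$. This is a non-product test configuration of $X_\infty$, and the paper argues that its Donaldson--Futaki invariant is zero — the invariant depends only on the central fibre with its induced action, viewed as a point of the fixed locus in the Hilbert scheme, on whose connected components it is constant, and it is evaluated by comparison with the analogous degeneration of the K\"ahler--Einstein surface $X_1^T$ to the same $Y$. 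A non-product test configuration with vanishing invariant already contradicts K-polystability; the insight you are missing is precisely that one does not need a strictly destabilizing configuration, so the delicate "negative Futaki pairing" computation you flag as the technical heart is avoided entirely.
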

\begin{proof} By Theorem \ref{double cover classification}, we may assume $X_\infty$ is a degree 18 hypersurface in $\P(1,2, 9,9)$ of the form $x_4^2=f_{18}(x_1, x_2, x_3)$ not  passing through the point $[0:0:1]$. So we may assume $f_{18}(x_1, x_2, x_3)= x_3^2+g_{18}(x_1, x_2)$. If  the term $x_2^9$ appears in $g_{18}$, then the one parameter subgroup $\Lambda$ acting with weight $(0, 9, 2, 2)$  degenerates $x_4^2-f_{18}$ to $x_4^2-x_3^2-a x_2^9$. This induces a test configuration for $X_\infty$ with central fiber isomorphic to $X_1^T$. Since $X_1^T$ has vanishing Futaki invariant,  and $X_\infty$ is K-polystable, we conclude that $X_\infty$ must be isomorphic to $X_1^T$.  If $x_2^9$ does not appear in $g_{18}$, then  the one parameter subgroup $\Lambda$ acting with weight $(0, 0, 1,1)$ degenerates $x_4^2-f_{18}(x_1,x_2, x_3)$ to $x_4^2-x_3^2$. Again this induces a test configuration for $X_\infty$ with central fiber the nonnormal hypersurface $Y$ defined by $x_4^2-x_3^2=0$. We claim this has zero Futaki 
invariant, thus contradicting the fact that $X_\infty$ is K-polystable. To see the claim, note that the Futaki invariant for a $\C^*$-action on a connected fixed component in the Hilbert scheme is constant. Since $X_1^T$ obviously degenerates to $Y$ and is fixed by the same $\Lambda$, we can compute the Futaki invariant on $X_1^T$, which is zero since it  is K\"ahler-Einstein. 
\end{proof}

The analytic moduli space $M_1''$ constructed in the previous section does not have property (KE),  since it does not parametrize the two examples $X_1^e$ and $X_1^T$ which we are unable to show that they can not appear as a Gromov-Hausdorff limit.   So we have to make a modification of $M_1''$. Now the only problem is to fit these two into $M_1''$. We first illustrate the phenomenon of modification of GIT by  a simple example. 

\begin{exa}
Let $\C^*$ act linearly on $\C^2$ by $t. (z_1, z_2)=(t z_1, z_2)$. Then the quotient is isomorphic to $\C$, and the polystable locus are points on the line $\{0\}\times\C$. If we remove the origin $(0,0)$, then the quotient is again isomorphic to $\C$, but the polystable locus differs from the previous one in that the orbit of the origin is replaced by the punctured line $\C^*\times\{0\}$. 
\end{exa} 

Our situation is very similar to this. We first investigate the $\Q$-Gorenstein deformation of $X_1^T$ studied in Section 3. Adopting the notation there, we have:

\begin{lem}
A point $v=(v_1, v_2, v_3)\in \Def(X_1^T)$ is polystable under the action of $\Aut^0(X_1^T)$ if and only if $v_1$, $v_2$ and $v_3$ are all non-zero or all zero, and $(0,0,0)$ is the only strictly polystable point. 
\end{lem}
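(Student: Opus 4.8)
The plan is to reduce the statement to the combinatorics of the torus action recorded in Lemma~\ref{local GIT}. Since $\Def(X_1^T)\cong\C^3$ is a vector space and $\Aut^0(X_1^T)=(\C^*)^2$ acts linearly and diagonally, with $v_1,v_2,v_3$ transforming under the characters of weights
\[
w_1=(1,-1),\qquad w_2=(-3,6),\qquad w_3=(-3,-3),
\]
we are in the setting of affine GIT for a torus acting linearly on an affine space. In this setting ``polystable'' means that the orbit $(\C^*)^2\cdot v$ is closed, and ``strictly polystable'' means in addition that the stabilizer of $v$ is positive-dimensional.

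First I would invoke the standard closed-orbit criterion for torus actions, which is the combinatorial shadow of the Hilbert--Mumford criterion and is exactly the mechanism illustrated by the toy example preceding the statement: for $v$ with support $S(v)=\{i:v_i\neq 0\}$, the orbit $(\C^*)^2\cdot v$ is closed if and only if the origin lies in the relative interior of $\mathrm{conv}\{w_i:i\in S(v)\}$. The key positive computation is the relation $9w_1+2w_2+w_3=0$ with strictly positive coefficients, which places the origin in the interior of the triangle $\mathrm{conv}\{w_1,w_2,w_3\}$; hence a point with all coordinates nonzero ($S(v)=\{1,2,3\}$) has closed orbit. The point $v=0$ is the fixed point and is trivially closed.

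Next I would rule out all remaining supports. Any two of $w_1,w_2,w_3$ are linearly independent (the three pairwise determinants are $3,-6,27$), so no segment $\mathrm{conv}\{w_i,w_j\}$ contains the origin, and no single weight is zero. Therefore for every proper nonempty support the origin fails to lie in the relevant convex hull, the orbit is non-closed, and such $v$ is not polystable. This establishes the first assertion: the polystable locus is $\{0\}$ together with $\{v_1v_2v_3\neq 0\}$.

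Finally, for strict polystability I would compute stabilizers. The stabilizer of $v$ in $(\C^*)^2$ is $\{\lambda:\lambda^{w_i}=1\text{ for all }i\in S(v)\}$. When all $v_i\neq 0$, the weights $w_1,w_2,w_3$ span a finite-index sublattice of the character lattice, so this stabilizer is finite and $v$ is in fact stable; only $v=0$, with stabilizer the full $(\C^*)^2$, is strictly polystable. I expect the single genuine subtlety to be conceptual rather than computational: one must fix precisely which notion of GIT (poly)stability is meant for this affine linear torus action and justify the closed-orbit criterion in that context; once that is settled, the weight bookkeeping above is entirely routine.
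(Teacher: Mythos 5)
Your argument is correct and is essentially the paper's own proof in dual form: the paper applies the Hilbert--Mumford criterion directly to the weights $(1,-1)$, $(-3,6)$, $(-3,-3)$ of Lemma \ref{local GIT}, exhibiting an explicit destabilizing one-parameter subgroup for each vanishing coordinate and checking that the system $a-b\geq 0$, $-3a+6b\geq 0$, $-3a-3b\geq 0$ has only the trivial solution, which is exactly the statement (via Gordan-type duality) that the origin lies in the interior of the weight triangle, as witnessed by your relation $9w_1+2w_2+w_3=0$. Your treatment of strict polystability via finiteness of stabilizers is a touch more explicit than the paper's, but the substance is the same.
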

\begin{proof} If $v_1=0$, then we can destabilize $v$ by the one-parameter subgroup $\lambda(t)=(t^{-1}, 1)$. If $v_2=0$, then we can destabilize $v$ by the one-parameter subgroup $(1, t^{-1})$. If $v_3=0$, then we can destabilize $v$ by the one-parameter subgroup $(t^3, t^2)$. If all the $v_i$'s are non-zero, then for $\lambda(t)=(t^a, t^b)$  to destabilize $v$ we need $a-b\geq0$, $-3a+6b\geq0$, and $-3a-3b\geq0$. It is easy to see that no non-trivial such pair $(a, b)$ exists.   
\end{proof}

To fill $X_1^{T}$ in our moduli, since we may locally identify $\Kur(X_1^T)$ with $\Def(X_1^T)$, and 
the $(\mathbb{C}^*)^2$-action on $\Kur(X_1^T)$ is compatible with the one on $\Def(X_1^{T})$, it suffices to study the GIT on $\Def(X_1^T)$. 
By the above lemma, the stable points all represent canonical log Del Pezzo surfaces with at most a unique $A_k (k\leq 7)$ singularity, and the polystable point $0$ represents $X_1^T$. The GIT quotient $Q$ is then smooth at $[X_1^{T}]$. 
The semistable orbit  $(0,v_2,v_3)$ (where $0<|v_2|^2+ |v_3|^2\ll 1$)  represent a log Del Pezzo surface with a unique $A_8$ singularity. 
Since it is unique up to isomorphism by \cite{Furu}, we denote it by $X_1^a$. Due to 
Lemma \ref{degree one stable classification}, it has discrete automorphism group and it is parametrized by a point $u_0$ in $M_1''\setminus E$. 

Consider the analytic subset  $\Kur'(X_1^T)$ of $\Kur(X_1^T)$ which represents only canonical log Del Pezzo surfaces, i.e. that consists of points with $v_2\neq 0$ and $v_3\neq 0$. Then the corresponding quotient $Q'$ can be identified with the previous quotient $Q$, which identifies every stable orbit, except the orbit of $X_1^a$ is replaced by $X_1^T$. $Q'$ can be viewed as the universal deformation space $X_1^a$. There is an analytic neighborhood $U$ of $u_0$,  and an embedding $\iota: U\rightarrow Q'=Q$ such that $\iota(u_0)=0$, and  $u$ and $\iota(u)$ parametrize equivalent surfaces. In terms of stack language,  the open embedding of stacks 
$[(\Kur(X_1^T)\setminus \Kur(X_1^T)')/(\mathbb{C}^*)^2]\hookrightarrow [\Kur(X_1^T)/(\mathbb{C}^*)^2]$ 
induces an isomorphism of the categorical moduli. 
Now we can simply define $M_1'''=M_1''$ as a variety and only change the surface parametrized by $u_0$ from $X_1^a$ to $X_1^T$. Then it is clear that $M_1'''$ is again an analytic moduli space of degree one log Del Pezzo surfaces.  So this modification takes care of the point $X_1^T$. 

Now we treat $X_1^e$ in a similar fashion. First notice that the linear system $|-2K_{X_1^e}|$ realizes $X_1^e$ as the double cover of $\P(1,1,2)$, thus $\Aut^0(X_1^e)$ is induced from  $\Aut(\P(1,1,2))$. Then one sees that $\Aut^0(X_1^e)\cong\C^*$ corresponds to the scaling $\lambda(t)=(t^2, t, 1, t^2)$. By Lemma \ref{local-global}, we have 
$$\Def(X_1^e)=\Def'\oplus \Def_1\oplus \Def_2,  $$
where $\Def'$ corresponds to equisingular deformations, $\Def_1$ corresponds to deformations of the local singularity at $[0 :0:1:0]$,  and  $\Def_2$ corresponds to deformations of the local singularity at $[1:0:0:0]$. By applying again the Main Theorem of \cite{Ma}, it follows that 
$\Def_1$ is two dimensional and $\Def_2$ is seven dimensional. Thus by dimension counting we must have $\Def'=0$. 
We can write down a semi-universal deformation family:  
\begin{equation}\label{x1e.def} 
 w^2=z^2x^2+zy^4+a_1 z^3+a_2z^2y^2+\sum_{i=0}^6 b_ix^iy^{6-j},  
\end{equation}
In particular, note that we have $\Aut(X_1^{e})$-invariant affine 
 versal deformation space $\Kur(X_1^e)$ as claimed in the explanation after Lemma \ref{local-global}  and in this case, $\Kur(X_1^{e})$ can be identified globally with the tangent space 
 $\Def{X}_1^e$ so that $(a_1, a_2)\in \Def_1$ and $(b_0, \cdots, b_6)\in \Def_2$. 

It is also easy to see the weights of the action is
 $$\lambda(t). (a, b)=(t^{-4}, t^{-2}, t^{8}, t^6, \cdots, t^2). $$
So in the local GIT quotient by $\Aut(X_1^e)$ a point $(a,b)$ is stable if and only if $a\neq 0$ and $b\neq0$ in which case $X_{a, b}$ has either a unique $A_k(k\leq 6)$ singularity or a $\frac{1}{4}(1,1)$ plus $A_k(k\leq 6)$ singularity. 

When we remove the subspace $\{0\}\oplus \Def_2$, every point becomes stable. In particular, the quotient of the subspace $(a, 0)$ with $a\neq 0$ is exactly a $\P^1$, which parametrizes surfaces in $M_1''$ 
$$w^2=a_1z^3+z^2x^2+zy^4+a_2 z^2y^2, $$
and intersects the exceptional divisor at one point corresponding to $a_1=0$.  
It is easy to see that $\lambda(t)$ degenerates all these surfaces to $X_1^e$ as $t$ tends to infinity, so they could not admit K\"ahler-Einstein metrics, and we need to remove them. 
Notice this family does not include the point corresponding to $X_1^a$, so we can make a further modification simultaneously as the previous one.  When we add the the subspace $\{0\}\oplus \Def_2$, the point $(a, 0)$ with $a\neq 0$ become semistable and in the GIT quotient this is contracted to the point $0$. To be more precise we take the neighborhood $U$ in $\Def(X_1^e)$ consisting of points $(a, b)$ with $||a|-1|\ll 1$ and $|b|\ll 1$, and the quotient $V$ by $\C^*$ gives rise to a tubular neighborhood of the $\P^1$ in $M_1''$. When we add the subspace $\{0\}\oplus \Def_2$ we have that $V$ gets mapped to a neighborhood of $0$ in the local GIT, with $\P^1$ contracted to $0$. 

As before the GIT on $\Kur(X_1^e)$ and on $\Def(X_1^e)$ are equivalent so this allows us to perform the contraction in an analytic neighborhood of the $\P^1$ inside $M_1'''$. We obtain a new analytic moduli space $M_1$, which enjoys the Moishezon property. 
Thus it has a natural structure of an algebraic space as well.  

Theorem \ref{MT} in degree one case then follows from the theorem below. 

\begin{thm} \label{degree one perfect}
$M_1$ has property (KE). 
\end{thm}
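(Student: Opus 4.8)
The plan is to combine the "K\"ahler-Einstein $\Rightarrow$ GIT polystable" principle with a careful bookkeeping of the three birational modifications $M_1'\dashrightarrow M_1''\dashrightarrow M_1'''\dashrightarrow M_1$, checking that none of them discards a genuine Gromov-Hausdorff limit nor is forced to omit one. So fix $X_\infty\in M_1^{GH}$. By Proposition \ref{orbifold compactness} it is a degree one K\"ahler-Einstein log Del Pezzo surface, hence normal and, by Theorem \ref{KEtoKstability}, K-polystable. Combining Lemma \ref{toric is unique} with Lemma \ref{Z8 is unique}, $X_\infty$ falls into exactly one of four classes: (i) a sextic $w^2=F_6(x,y,z)$ in $\P(1,1,2,3)$ whose $F_6$ contains a $z^3$ term; (ii) a sextic with $F_6\cong z^2(x^2+y^2)+zg_4+g_6$; (iii) $X_\infty\cong X_1^e$; or (iv) $X_\infty\cong X_1^T$. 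I would treat these in turn.

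For case (i) I would view $X_\infty$ as a point of $\P_s$ and apply Theorem \ref{CM stability} to the $SL(2;\C)$-action. The hypotheses are readily checked: $SL(2;\C)$ is reductive without nontrivial characters, the weighted projective space $\P_s$ has Picard rank one, and condition (2) holds because a smooth K\"ahler-Einstein degree one Del Pezzo surface (which exists by Hypothesis \ref{Tian-Yau}) is K-polystable and a generic one-parameter subgroup degenerates it to a non-isomorphic, hence non-product, central fibre. Since $X_\infty$ is K-polystable and reduced, its point is GIT polystable, so it is parametrized by $M_1'$. Case (ii) is handled by Lemma \ref{CM exceptional}, which gives $SO(2;\C)$-polystability and hence a point of the exceptional divisor $E\cong\P_e^{ss}//SO(2;\C)$; cases (iii) and (iv) are immediate, since the constructions of $M_1$ were designed precisely to insert $X_1^e$ and $X_1^T$.

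The real content, and the step I expect to be most delicate, is confirming that the polystable point produced in cases (i) and (ii) survives in $M_1$ after the modifications. The guiding principle is that \emph{every} surface the modifications remove or replace fails to admit a K\"ahler-Einstein metric, because it degenerates nontrivially, via an explicit one-parameter subgroup, to one of $X_1^T$ or $X_1^e$, contradicting K-polystability. Concretely: the point $p_0$ parametrizes a non-normal surface, so it cannot be $X_\infty$; the point $u_0$ parametrizes $X_1^a$, which is only strictly GIT semistable and degenerates to $X_1^T$, hence is not polystable and admits no K\"ahler-Einstein metric; and the contracted $\P^1$ consists of surfaces $w^2=a_1z^3+z^2x^2+zy^4+a_2z^2y^2$ which $\lambda(t)=(t^2,t,1,t^2)$ degenerates to $X_1^e$, again admitting no K\"ahler-Einstein metric. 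Since $X_\infty$ \emph{does} carry such a metric, its polystable point can be none of these, so it lies where the relevant modification is a local isomorphism and is therefore parametrized by $M_1$.

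Finally I would note the one subtlety in case (ii): the unique point of $E$ lying on the contracted $\P^1$ (the one with $a_1=0$) parametrizes a rank-two surface that still degenerates to $X_1^e$, hence is not a limit; so the genuine rank-two K\"ahler-Einstein limits all lie on the part of $E$ untouched by the contraction, and remain parametrized in $M_1$. Collecting the four cases, every surface parametrized by $M_1^{GH}$ is isomorphic to one parametrized by $M_1$, which is exactly property (KE).
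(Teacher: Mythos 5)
Your proof is correct and follows essentially the same route as the paper: reduce via Lemma \ref{toric is unique} and Lemma \ref{Z8 is unique} to the sextic cases, apply Theorem \ref{KEtoKstability} together with Theorem \ref{CM stability} (on $\P_s$) and Lemma \ref{CM exceptional} (on $\P_e$) to get GIT polystability, and then observe that the surfaces discarded by the modifications ($X_1^a$, the $\P^1$ family, and the point $p_0$) all degenerate nontrivially to $X_1^T$ or $X_1^e$ or are non-normal, hence cannot be K\"ahler--Einstein. Your bookkeeping of the modifications is in fact more explicit than the paper's rather compressed final sentences, but the argument is the same.
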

\begin{proof}
The proof is very similar to Theorem \ref{degree two perfect}. By Lemma \ref{toric is unique} we only need to show that if a $X\in M_1^{GH}$ is a sextic hypersurface  in $\P(1,1,2,3)$ defined by $w^2=f_6(x, y,z)$, then it is parametrized by some element in $M_1''$. If $f_6$ contains a term $az^3$ with $a\neq0$, then it is parametrized a point $u$ by $\P_s$. Then by Theorem \ref{KEtoKstability} and Theorem \ref{CM stability},  keeping in mind that $\P_s$ has Picard rank one, we conclude that $u$ is polystable under the $SL(2;\C)$ action, thus $X$ is parametrized by a point $p$ in $M_1'$. Then $X$ can not be isomorphic to $X_1^T$ or the $\P^1$ family above. So $X$ is parametrized by a point in $M_1$.  If the term $z^3$ does not appear in $f_6$, then by Lemma \ref{Z8 is unique} and Lemma \ref{CM exceptional}  $X$ is either isomorphic to $X_1^e$ or  is parametrized by a polystable point $u\in \P_e$. 
Again this point $u$ can not be on the $\P^1$ and this means that $u$ is in $M_1$. 

\end{proof}

We can construct a KE moduli stack $\mathcal{M}_1$ 
by gluing the previously constructed moduli stack with $[U/\Aut(X_1^e)]$ where $U$ is some open $\Aut(X_1^e)$-invariant neighborhood of $0\in \Kur(X_1^e)$ 
(along $[(U\setminus (\{0\}\oplus \Def_2))/\Aut(X_1^e)]$). 
Recall that in this case, we identified globally $\Kur(X_1^e)$ and 
$\Def(X_1^e)$. 
We can show that with a small enough $\Aut(X_1^e)$-invariant open neighborhood $U$ of $0$ in $\Kur(X_1^e)$, a stack  
$[(U\setminus (\{0\}\oplus \Def_2)))/\Aut(X_1^e)]$ 
has a natural \'etale morphism to the previously constructed 
moduli stack so that the glueing is possible. Indeed, the $\mathbb{Q}$-Gorestein deforming component (cf. \cite[section 5]{KS}) of a Luna \'etale slice in 
the Hilbert scheme $\Hilb(\mathbb{P}(H^{0}(X_1^T,-K_{X_1^T}^{\otimes m})))$ at $[X_1^T]$ with respect to the standard $\SL$ action 
is \'etale locally semi-universal deformation by the universality of the Hilbert scheme. Then the \'etale local uniqueness of 
semi-universal family tells us it is actually \'etale locally equivalent with $U$ including the family on it. Then the assertion follows from the universality of Hilbert scheme again. Note that especially $U$ includes the subspace $\Def_{1}\oplus \{0\}$ so that the categorical moduli of 
the open immersion $[(U\setminus (\{0\}\oplus \Def_2)))/\Aut(X_1^e)] \hookrightarrow [U/\Aut(X_1^e)]$ represents the contraction 
of $\mathbb{P}^1$. 

Then $\M_1$ is a KE moduli stack and $M_1$ constructed above is KE moduli space. This completes the proof of Theorem \ref{MT} for degree $1$ case as well. Note that our contraction of $\mathbb{P}^1$ on the coarse quotient is constructed just on an \'etale cover, not 
a priori an open substack. Indeed it is not, although we omit the lengthy proof for that. 
This is the reason our argument is not enough to show $M_1$ is a (projective) variety. 
Completely as before, there is a natural anti-holomophic involution on $M_1$ which gives rise to the complex conjugation.

\subsubsection{A remark on a conjecture of Corti}

In the paper \cite{Cor}, Corti conjectured the following, 
motivated by the possibility of using birational geometry to get 
certain ``nice" integral models over a discrete valuation ring:

\begin{conj}[{\cite[Conjecture 1.16]{Cor}}]
For an arbitrary smooth punctured curve $C\setminus \{p\}$ and a smooth family of 
Del Pezzo surfaces $f\colon \mathcal{X}\to (C\setminus \{p\})$ over it, 
we can complete it to a flat family $\bar{f}\colon \bar{\mathcal{X}}\to C$ which satisfies: 
\begin{itemize}
\item $\mathcal{X}$ is terminal. 
\item $\mathbb{Q}$-Gorenstein index of $\bar{\mathcal{X}}_{p}$ is either $1, 2, 3$ or $6$ and $-6K_{\bar{\mathcal{X}}_{p}}$ is very ample. 
\end{itemize}
\end{conj}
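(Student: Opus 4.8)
The plan is to deduce the conjecture (in the Del Pezzo case) from the existence and properness of the moduli spaces $M_d$ of Theorem \ref{MT}, together with the classification of the singularities that can occur on Gromov--Hausdorff limits. Since the statement is local on $C$, I would first reduce to the case $C=\Delta$, a small disk with $p=0$, and a smooth family $f\colon\mathcal{X}\to\Delta^{*}$. For $d\geq 5$ there is no continuous moduli, so the family is isotrivial and the completion is immediate; the real content is the range $d\in\{1,2,3,4\}$, where Theorem \ref{MT} applies. Polarizing by the relative anticanonical bundle, $f$ induces a holomorphic map $\phi\colon\Delta^{*}\to M_d^{\mathrm{sm}}$ into the smooth locus of the moduli space.

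Next, since $M_d$ is compact (indeed projective for $d\neq 1$), $\phi$ extends to $\bar\phi\colon\Delta\to M_d$ with $\bar\phi(0)=[X_0]$ for a uniquely determined $\mathbb{Q}$-Gorenstein smoothable K\"ahler--Einstein log Del Pezzo surface $X_0$; this $X_0$ is exactly the Gromov--Hausdorff limit of the fibres $\mathcal{X}_t$. Using the analytic moduli structure near $[X_0]$ (the finite surjection onto $\Kur(X_0)//\Aut(X_0)$) I would produce a completion $\bar{\mathcal{X}}\to\Delta$ extending $f$ whose central fibre is $X_0$. Concretely, as in the proof of Lemma \ref{Qsmoothable}, I would embed the fibres by $|-mK|$, take the closure in $\mathbb{P}^{N}\times\Delta$ and normalize, and then run a relative minimal model program over $\Delta$ to reach the relatively canonical $\mathbb{Q}$-Gorenstein model. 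The point of Corti's motivation is precisely that the MMP is carried out over $C$ itself, with no base change, so that $\bar{\mathcal{X}}$ is a genuine completion of the given family.

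It then remains to check the three required properties. Terminality of the total space follows because $X_0$ has only $T$-singularities (Theorem \ref{T-singularity}) and $\bar{\mathcal{X}}\to\Delta$ restricts near each of them to the semi-universal $\mathbb{Q}$-Gorenstein smoothing: the smoothing of $\frac{1}{dn^{2}}(1,dna-1)$ has total space a cyclic quotient \emph{terminal} threefold singularity of type $\frac{1}{n}(1,-1,a)$ with $(a,n)=1$, so the canonical model is terminal. The Gorenstein index is read off from the singularity list of Section \ref{DG input}: by Theorem \ref{Bishop-Gromov} the only non-canonical points are $\frac14(1,1)$, $\frac18(1,3)$ (index $2$) and $\frac19(1,2)$ (index $3$), while the canonical points $A_k,D_4$ have index $1$; hence $\mathrm{ind}(X_0)\in\{1,2,3\}$, so $-6K_{X_0}$ is Cartier and the index divides $6$, as required. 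Finally, very ampleness of $-6K_{X_0}$ can be verified from the explicit models of Theorem \ref{double cover classification} and Corollaries \ref{equation degree one}, \ref{equation degree two} (each $X_0$ being a weighted hypersurface or double cover for which $6$ is a multiple of the index, large enough to separate points and tangents), or by appealing to boundedness of the relevant surfaces.

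The main obstacle I expect lies in realizing the K\"ahler--Einstein limit $X_0$ as the central fibre of a model over $C$ \emph{without base change}, and hence in transferring the index control to an honest integral model. The relative canonical model from the MMP is unique but a priori only a $\mathbb{Q}$-Fano degeneration with log terminal singularities, and need not be K-polystable; conversely it is the K-polystable limit $X_0$ supplied by $M_d$ that carries the controlled singularities. Reconciling the two amounts to showing that the unique K-polystable fibre in the flat family is reachable over $\Delta$, not merely after a finite ramified cover. I would attack this by combining properness of $M_d$ with the uniqueness of K-polystable degenerations (Theorem \ref{KEtoKstability} and Theorem \ref{CM stability}): the family determines a single point $[X_0]\in M_d$, and any two $\mathbb{Q}$-Gorenstein fillings with K-polystable central fibre have isomorphic special fibres, so the birational modification over $\Delta$ replacing the canonical-model fibre by $X_0$ is forced and fixes $\Delta^{*}$. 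Controlling the automorphisms and the monodromy in $\Aut(X_0)$ so that this replacement descends to $C$ rather than to a cover is the delicate technical heart of the argument.
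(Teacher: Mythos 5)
You are attempting to prove Corti's conjecture itself, but the paper does \emph{not} prove this statement: immediately after quoting it, the authors prove only a weakened Proposition in which a ramified base change $p'\in\tilde C\to C$ is permitted, and they explicitly describe their result as ``rather weak, in the sense we permit base change.'' Your proposal breaks at exactly the point where the paper retreats. You correctly identify the elimination of base change as ``the delicate technical heart of the argument,'' but you do not supply that argument; you only assert that replacing the central fibre of the relative canonical model by the K-polystable limit $X_0$ ``is forced and fixes $\Delta^{*}$.'' That is the gap. The extension $\bar\phi\colon\Delta\to M_d$ lives in the coarse space, which near $[X_0]$ is the GIT quotient $\Kur(X_0)//\Aut(X_0)$; to produce an actual family over $\Delta$ with central fibre $X_0$ one must lift the arc through $\Kur(X_0)\to\Kur(X_0)//\Aut(X_0)$, and such a lift with polystable limit exists in general only after a finite ramified cover of the disk (already for a finite cyclic quotient $\C\to\C/(\Z/n\Z)$ an arc lifts only after an $n$-fold base change, and for a reductive group one must additionally twist by a one-parameter subgroup to reach the closed orbit). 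Uniqueness of the K-polystable filling gives separatedness, not existence of a filling over $C$ itself. As written, your argument therefore establishes the same base-changed statement as the paper's Proposition, not the Conjecture.

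Beyond this, your route differs substantially from the paper's and is less economical. The paper never invokes Gromov--Hausdorff limits here: it observes that the stack $\M_1''$, glued from GIT quotient stacks of semistable loci, is universally closed and by construction parametrizes only log Del Pezzo surfaces of Gorenstein index $1$ or $2$, so the valuative criterion yields the filling after base change at once, with terminality of the total space coming from the $\Q$-Gorenstein deformation structure. This gives index at most $2$ for the central fibre, sharper than your $\{1,2,3\}$; your weaker bound arises because you insist the central fibre be the K\"ahler--Einstein limit, which can be $X_1^T$ with $\frac{1}{9}(1,2)$ points of index $3$, whereas the paper does not require K-polystability of the central fibre. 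Two smaller points: for $d\geq 5$ the family need not be trivial (for $d=8$ there are two deformation classes, and in general there is monodromy), so ``the completion is immediate'' still needs justification; and very ampleness of $-6K$ on the central fibre is asserted rather than verified from the explicit weighted-hypersurface models.
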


\noindent 
He called $\bar{\mathcal{X}}$ the \textit{standard model}.  We have the following partial   solution to the above;
it is rather weak, in the sense we permit base change, but 
on the other hand we even have a classification of the possible central fiber. 

\begin{prop}
For an arbitrary smooth punctured curve $C\setminus \{p\}$ and a smooth family of 
Del Pezzo surfaces $f\colon \mathcal{X}\to (C\setminus \{p\})$ over it, 
possibly ramified base change $p' \in \tilde{C}\rightarrow C$ (with $p'\mapsto p$), we can fill the punctured family $\mathcal{X}\times _{(C\setminus \{p\})} (\tilde{C}\setminus \{p'\})$ 
to a flat family $\bar{\mathcal{X}}' \rightarrow \tilde{C}$ such that: 
\begin{itemize}
\item $\bar{\mathcal{X}}'$ is terminal. 
\item $\mathbb{Q}$-Gorenstein index of $\bar{\mathcal{X}}'_{p'}$ is either $1, 2$ and 
$-6K_{\bar{\mathcal{X}}'_{p'}}$ is very ample. 
\end{itemize}
\end{prop}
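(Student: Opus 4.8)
The plan is to split the argument according to the degree $d=K^2$ of the fibers, which is constant along the connected family, and to feed the compactness of the moduli spaces $M_d$ into the deformation theory of $\Q$-Gorenstein smoothings developed above. The degrees $d\geq 5$ admit no moduli and are handled by elementary filling arguments: the fiber is rigid, so $f$ is isotrivial, and one fills in directly (for $\P^2$- or $\P^1\times\P^1$-type fibrations by extending the associated projective bundle over $C$, and otherwise by filling with a smooth fiber after a base change absorbing the finite monodromy), always obtaining a smooth central fiber of index one for which $-6K$ is very ample. I therefore concentrate on $d\in\{1,2,3,4\}$, where the sharp index bound is the real content.

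For $d\le 4$ every smooth Del Pezzo surface is K\"ahler-Einstein (Theorem \ref{MT}), so $f$ defines a morphism from $C\setminus\{p\}$ into the smooth locus $M_d^{\text{sm}}$. Since $M_d$ is proper and $C$ is a smooth curve, this rational map extends to a morphism $\bar\phi\colon C\to M_d$; set $[X_0]=\bar\phi(p)$, a $\Q$-Gorenstein smoothable K\"ahler-Einstein log Del Pezzo surface. The plan is then to construct the filling exactly as in the proof of Lemma \ref{continuity}: after a ramified base change $\tilde C\to C$ (needed to lift $\bar\phi$ near $p$ through the quotient $\Kur(X_0)\to \Kur(X_0)//\Aut(X_0)$ to the closed, i.e.\ polystable, orbit $0$), I pull back the semi-universal $\Q$-Gorenstein family over $\Kur(X_0)$ and glue it with $\mathcal{X}$ over the punctured neighborhood, where the two families agree by versality. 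This produces a flat filling $\bar{\mathcal{X}}'\to\tilde C$ with central fiber $X_0$. Terminality of the total space is automatic: by the Koll\'ar--Shepherd-Barron theory of $T$-singularities (\cite{KS}, cf.\ Lemma \ref{local no obstruction}) a $\Q$-Gorenstein one-parameter smoothing of a surface $T$-singularity has total space that is smooth away from finitely many terminal cyclic quotient points, so $\bar{\mathcal{X}}'$ is terminal.

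It remains to control the Gorenstein index of the central fiber. By Theorem \ref{Bishop-Gromov} together with the singularity classification of Section \ref{DG input}, for $d\in\{2,3,4\}$ every limit $X_0$ has index at most two, and for $d=1$ the \emph{only} limit whose index exceeds two is the toric surface $X_1^T$, whose index $3$ comes from its two $\frac19(1,2)$ points (Lemma \ref{toric is unique}). In all index-$\le 2$ cases the sheaf $-6K_{X_0}=3\cdot(-2K_{X_0})$ is Cartier, and very ampleness follows from the explicit weighted-projective models of Theorems \ref{logDelPezzo-canonical} and \ref{logDelPezzo-index2}. The single remaining case is $X_0=X_1^T$, and this is exactly where the base change in the statement is essential: I would take a further base change $\tilde C\to C$ of order divisible by $3$, pull back the family whose central fiber is $X_1^T$, and run a relative minimal model program (equivalently take a relative terminalization) over $\tilde C$, producing a new terminal filling whose central fiber has index at most two, again with $-6K$ Cartier and very ample.

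The main obstacle is precisely this last step. General threefold MMP, as in Corti's standard-model construction \cite{Cor}, only guarantees index dividing $6$; extracting the sharper bound requires a local analysis at the two $\frac19(1,2)$ points of how the terminal $3$-fold cyclic quotient singularity of $\bar{\mathcal{X}}'$ behaves under the degree-$3$ base change, and a check that the relative MMP then yields a central fiber with index $\le 2$ rather than merely $\le 6$. I expect the bulk of the work, and the only genuinely delicate computation, to lie in verifying this local behaviour and in confirming that the resulting model is still terminal with $-6K$ very ample on the new central fiber.
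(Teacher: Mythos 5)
Your reduction to the degrees $d\le 4$ and your use of compactness of the moduli space are in the right spirit, but the argument breaks down at exactly the point you flag yourself, and that point is the entire content of the proposition. If you extend $C\setminus\{p\}\to M_1^{\mathrm{sm}}$ across $p$ using the \emph{final} KE moduli space $M_1$, the limit can be $[X_1^T]$, whose two $\frac19(1,2)$ points force Gorenstein index $3$; your proposed repair (a further degree-$3$ base change followed by a relative MMP/terminalization) is precisely the step you admit you cannot verify, and, as you note, such arguments in general only yield index dividing $6$ (this is Corti's original bound). So the sharp conclusion ``index $1$ or $2$'' is not established by your proof.

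The idea you are missing is to run the valuative criterion not in $M_1$ but in the intermediate moduli \emph{stack} $\mathcal{M}_1''$ of subsection \ref{dP1.blup.ss}, i.e.\ \emph{before} the modifications that insert $X_1^T$ and $X_1^e$. That stack is glued from quotient stacks of GIT-\emph{semistable} loci, hence is universally closed, and every surface it parametrizes is a sextic hypersurface in $\P(1,1,2,3)$ (either of the form $w^2=z^3+f_4z+f_6$ or of the blown-up form $w'^2=z'^2(x'^2+y'^2)+z'g_4+g_6$), so its Gorenstein index is automatically $1$ or $2$ by Theorem \ref{logDelPezzo-index2}. Semistable reduction in $\mathcal{M}_1''$ produces, after a ramified base change, a filling whose central fiber is a GIT-semistable (not necessarily polystable, and not necessarily K\"ahler--Einstein) such sextic: for instance, where the KE compactification would force $X_1^T$, the stack $\mathcal{M}_1''$ instead supplies the surface $X_1^a$ with a single $A_8$ singularity, of index $1$. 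Since Corti's conjecture asks nothing about K-polystability of the central fiber, this weaker but better-behaved filling suffices, and the index-$3$ surface never enters the picture. Your degree-by-degree decomposition and the KSB argument for terminality of the total space of a $\Q$-Gorenstein smoothing of $T$-singularities can be kept as is; it is only the choice of compactified moduli used for the filling that needs to change.
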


\begin{proof}
We have constructed the moduli stack $\M_1''$ by gluing quotient stacks of certain GIT semistable locus 
(subsection \ref{dP1.blup.ss}). 
From the construction, it is universally closed stack and 
it parametrizes log del Pezzo surfaces of $\mathbb{Q}$-Gorenstein index 
$1$, $2$. The $\mathbb{Q}$-Gorenstein property of $\mathcal{X}$ follows from 
our construction as well. 
\end{proof} 

\subsubsection{Relation with moduli of curves}\label{curve.1}

We expect the KE moduli variety $M_1$ to be  a divisor of 
one of the geometric compactifactions of moduli of curves with genus $4$. 
Especially we suspect  that our moduli $M_1$ is
the prime divisor of $\overline{M}_4(a)$ with $\frac{23}{44}<a<\frac{5}{9}$ in \cite{CJL}. Note that it is the moduli of Hilbert polystable 
canonical curves. 

\section{Further discussion} \label{K moduli}
\subsection{Some remarks}
\subsubsection{Lower bound of the Bergman function}
The main technical part in the proof of Proposition \ref{Fano limit} is a uniform lower bound of the Bergman function. Let $(X, J, \omega, L)$ be a polarized K\"ahler manifold, then for any $k$ there is an induced metric on $H^0(X, L^k)$. The Bergman function is defined by 
$$\rho_{k, X}(x)=\sum |s_\alpha|^2(x), $$
where  $\{s_\alpha\}$ is any orthonormal basis of $H^0(X, L^k)$. Kodaira embedding theorem says that for fixed $X$, and for sufficiently large $k$ the Bergman function is always positive.   It is proved in \cite{DS} that for a $n$ dimensional K\"ahler-Einstein Fano manifold $(X, J, \omega)$, we always have $\rho_{k, X}(x)\geq \epsilon$ for some integer $k$ (and thus every positive multiple of $k$) and $\epsilon>0$ depending only on $n$. This was named ``partial $C^0$ estimate" in \cite{Tian1} and it is also proved there for two dimensional case. It was explained in \cite{DS} that one may not take $k$ to be all sufficiently large integers, and in our proof of the main theorem we have seen examples, see Remark \ref{Tian conjecture}.  Indeed we  found explicitly all the integers $k$ that we need to take in each degree in order to ensure a uniform positivity of Bergman function for all K\"ahler-Einstein Del Pezzo surfaces. (Compare the strong partial $C^0$ estimate in \cite{Tian1}, Theorem 2.2):
\begin{itemize}
 \item $d=4, 3$:  $k\geq 1$;
 \item $d=2$:  $k=2l$, with $l\geq 1$;
 \item $d=1$:  $k=6l$, with $l\geq 1$.
\end{itemize}

\subsubsection{K\"ahler-Einstein metrics on del Pezzo orbifolds}

As a consequence of our main Theorem \ref{MT}, we have a complete classification of K\"ahler-Einstein Del Pezzo surfaces with at worst canonical singularities in terms of K-polystability. 
\begin{cor}
 Let $X$ be a Del Pezzo surface with at worst canonical singularities. Then
$$X \mbox{ admits a K\"ahler-Einstein metric} \Longleftrightarrow X \mbox{ is K-polystable}.$$  
\end{cor}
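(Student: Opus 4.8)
The forward implication is precisely Theorem \ref{KEtoKstability} of Berman, so the only content is the converse: a K-polystable canonical del Pezzo surface admits a K\"ahler-Einstein metric. The plan is to feed such an $X$ through the machinery assembled for Theorem \ref{MT}. The first observation is that the interesting range is $d=K_X^2\in\{1,2,3,4\}$: a surface with at worst canonical, i.e. du Val (ADE), singularities has only $T$-singularities, so by \cite{HP} it is automatically $\mathbb{Q}$-Gorenstein smoothable and therefore lies within the scope of our moduli construction. For $d\geq 5$ there is no moduli, and the finitely many canonical del Pezzo surfaces carrying no K\"ahler-Einstein metric (those with non-reductive automorphism group) are excluded from K-polystability by Matsushima's obstruction \cite{Matsu}, so both sides of the equivalence fail together and the statement is classical there.

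For $d\in\{1,2,3,4\}$ I would first use Theorem \ref{logDelPezzo-canonical} to realize $X$ anticanonically inside the standard model for its degree: a complete intersection of two quadrics in $\P^4$, a cubic in $\P^3$, a quartic double plane, or an anticanonical sextic. The crucial step is then to promote K-polystability to GIT-polystability. Since $X$ is reduced and equidimensional, and the relevant parameter space (respectively $H_4$, $H_3$, $\P_*(\mathrm{Sym}^4\C^3)$, $\P_s$) has Picard rank one and contains a K-polystable, non-product member, Theorem \ref{CM stability} (or Corollary \ref{local versal} in the hypersurface cases) applies and shows that $X$ represents a GIT-polystable point. Consequently $X$ is parametrized by a point of $M_d$, and for $d\leq 2$ this point lies on the \emph{canonical} locus, namely the complement of the exceptional divisor of $M_d$ that carries the non-canonical $\frac14(1,1)$-type surfaces.

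Finally I would invoke Theorem \ref{MT}: the homeomorphism $\Phi\colon M_d^{GH}\to M_d$ has the property that a point and its image parametrize isomorphic surfaces, and every point of $M_d^{GH}$ parametrizes a K\"ahler-Einstein log del Pezzo surface. Pulling the point representing $X$ back by $\Phi^{-1}$ therefore produces a K\"ahler-Einstein surface isomorphic to $X$, which gives the desired metric.

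The step I expect to be the main obstacle is the CM-comparison: one must check in each degree that the hypotheses of Theorem \ref{CM stability} genuinely hold (Picard rank one of the base, and the presence of a non-product K-polystable degeneration witnessing positivity of the CM line bundle), and, for $d\leq 2$, one must verify carefully that a \emph{canonical} $X$ really lands on the canonical part of $M_d$ rather than on the blown-up or contracted loci that were introduced to accommodate the non-canonical $T$-singular limits. Once this bookkeeping is done, the equivalence follows from the identification of $M_d$ with $M_d^{GH}$.
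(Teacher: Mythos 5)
Your proposal is correct and follows essentially the same route as the paper's own (very short) proof: Berman's theorem for the forward direction, and for the converse the chain ``canonical $\Rightarrow$ $T$-singularities $\Rightarrow$ $\Q$-Gorenstein smoothable by \cite{HP}, then Theorem \ref{CM stability} to upgrade K-polystability to GIT polystability so that $[X]\in M_d$, then Theorem \ref{MT} to produce the metric.'' The extra bookkeeping you flag (degree-by-degree verification of the hypotheses of Theorem \ref{CM stability}, and checking for $d\le 2$ that a canonical $X$ avoids the blown-up and contracted loci --- which for $d=1$ amounts to noting that $X_1^a$ and the surfaces in the contracted $\P^1$ are only strictly K-semistable) is exactly the detail the paper leaves implicit, so no genuinely different argument is involved.
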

\begin{proof}
 The direction ``$\Longrightarrow$'' is known by Theorem  \ref{KEtoKstability}. To prove the other direction, suppose that $X$ is K-polystable and with at worst canonical singularities (in particular it is automatically $\Q$-Gorenstein smoothable). Then by 
Theorem \ref{CM stability} $X$ is also polystable with respect to the stability notions that we used in the construction of our moduli spaces, i.e. $[X] \in M_d$. Thus $X$ admits a K\"ahler-Einstein metric as a consequence of  Theorem \ref{MT}.
\end{proof}

The above result gives the answer to the conjecture of Cheltsov and Kosta (\cite{CK}, Conjecture 1.19) on the existence of K\"ahler-Einstein metrics on canonical Del Pezzo surfaces. In particular, we have the following exact list of possible singularities that can occur. Let $(X,\omega)$ be a degree $d\leq 4$ Del Pezzo surface with canonical singularities, then it admits a K\"ahler-Einstein metric if and only if $X$ is smooth or
\begin{itemize}
 \item $d=4$: Sing($X$) consists of only two $A_1$ singularities and $X$ is simultaneously diagonalizable, or exactly four singularities (in which case $X$ is isomorphic to $X_4^T$);
 \item $d=3$: Sing($X$) consists of only points of type $A_1$, or of exactly three points of type $A_2$ (in which case $X$ is isomorphic to $X_3^T$);
 \item $d=2$: Sing($X$) consists of only points of type $A_1$, $A_2$, or of exactly two $A_3$ singularities;
 \item $d=1$: Sing($X$) consists of only points of type $A_k$ $(k\leq 7)$, or of exactly two $D_4$ singularities, and $X$ is not isomorphic to one the surfaces in the $\P^1$ family in the last section. 
\end{itemize}

As we have seen, the class of log Del Pezzo surfaces with canonical singularities is not sufficient to  construct a KE moduli variety. In particular we have found some $\Q$-Gorenstein smoothable K\"ahler-Einstein log Del Pezzo surfaces, hence K-polystable, with non-canonical singularities. Thus it is natural to ask the following differential geometric/algebro-geometric question: do there exist other $\Q$-Gorenstein smoothable K\"ahler-Einstein/K-polystable log Del Pezzo surfaces besides the ones which appear in our KE moduli varieties? 
If the answer to the above question is negative (as we conjecture) then the Yau-Tian-Donaldson conjecture for K-polystability also holds for the class of $\Q$-Gorenstein smoothable Del Pezzo surfaces. For this it is of course sufficient to prove the following: let $\pi\colon \mathcal{X} \to \Delta$ be a $\Q$-Gorenstein deformation of a K-polystable Del Pezzo surface $X_0$ over the disc $\Delta$ such that the generic fibers $X_t$ are smooth (hence admit  K\"ahler-Einstein metrics). Then $X_0$ admits a K\"ahler-Einstein metric $\omega_0$, and $(X_0, \omega_0)$ is the Gromov-Hausdorff limit of a sequence of K\"ahler-Einstein metrics on the fibers $(X_{t_i},\omega_{t_i})$ for some sequence $t_i\rightarrow 0$.

\subsection{On compact moduli spaces}

In this final section, we would like to formulate a conjecture about the existence of certain compact moduli spaces of K-polystable/K\"ahler-Einstein Fano varieties. Before stating our conjecture, we recall some important steps in the history of the construction and compactifications of moduli spaces of varieties.

For complex curves of genus $g\geq 2$, the construction of the moduli spaces, and their ``natural'' compactifications, was completed during the seventies by Deligne, Mumford, Gieseker and others \emph{using} GIT. The degenerate curves appearing in the compactification are the so-called ``stable curves'', i.e., curves with nodal singularities and discrete automorphisms group. Let us recall that these compact moduli spaces have also a ``differential geometric'' interpretation. It is classically well-known that every curve of genus $g$ has a unique metric of constant Gauss curvature with fixed volume. As the curves move towards the boundary of the Deligne-Mumford compactification, the diameters, with respect to the constant curvature metrics, go to infinity and finally these metric spaces  ``converge'' to a complete metric with constant curvature and hyperbolic cusps on the smooth part of a ``stable curve''.
 
The construction of compact moduli spaces of higher dimensional polarized varieties turns out to be much more complicated than in the one dimensional cases. 
Indeed, in the seminal paper \cite{KS} the authors discovered examples of surfaces with ample canonical class and semi-log-canonical singularities, which are the natural singularities to be considered for the compactification,  which are \emph{not} asymptotically GIT stable. The central point for this phenomenon is that there are semi-log-canonical singularities which have ``too big'' multiplicity compared to the one required to be asymptotically Chow stable 
(\cite{Mum2}). Nevertheless,  proper separated moduli of canonical models of surface of general type  have been recently constructed using birational geometric techniques instead of classical GIT. These compactifications are sometimes known as 
\textit{Koll\'ar-Shepherd-Barron-Alexeev (KSBA)  type moduli}. 
It is then natural to ask what is the ``differential geometric'' interpretation of these kind of moduli spaces. 

In order to discuss this last point, we first recall that GIT theory became again a main theme for the following reason: the existence of a K\"ahler-Einstein, or more generally constant scalar curvature, metric on a polarized algebraic variety is found to be deeply linked to some GIT stability notions, e.g., asymptotic Chow, Hilbert stability and in particular to the formally GIT-like notion of  ``K-stability"  introduced  in \cite{Tian2}, \cite{Do1}. Similarly to the previous discussion, asymptotic Chow stability seems to not fully capture the existence of  a K\"ahler-Einstein metric since there are examples of K\"ahler-Einstein varieties which are asymptotically Chow \textit{un}stable (\cite{KS}, \cite{Od2}). 
On the other hand, for $\Q$-Fano varieties it is indeed proved that the existence of a K\"ahler-Einstein metric implies K-polystability \cite{Berman}. 

It turns out that the notion of K-stability is also closely related with the singularities allowed in the KSBA compactifications  (\cite{Od1}, \cite{Od2}):  for  varieties  with ample canonical class, the notion of K-stability \emph{coincides} with the semi-log-canonicity property, and for Fano varieties K-(semi)stability \emph{implies} log-terminalicity. This last condition on the singularities in the Fano case it is also important for differential geometric reasons. As recently shown in \cite{DS}, it is known that Gromov-Hausdorff limits of smooth K\"ahler-Einstein Fano manifolds (and more generally of polarized K\"ahler manifolds with control on the  Ricci tensor, the injectivity radius  and with  bounded diameter) are indeed $\Q$-Fano varieties, i.e., they have at worst log-terminal singularities, and moreover they must be K-polystable,  by \cite{Berman}. 

Summing up, a central motivation of the present work was to investigate how K\"ahler-Einstein metrics and the compact moduli varieties are indeed related. Thus, motivated by our results on Del Pezzo surfaces and by the above discussion, we shall now try to state  a conjecture on moduli of K\"ahler-Einstein/K-polystable Fano varieties. 

Denote the category of algebraic schemes over $\C$ by Sch{$_\C$}, 
and let $$\mathcal{F}_h\colon \mbox{Sch}_{\C}^{o} \rightarrow Set$$ be the contravariant moduli functor which sends an object $S\in Ob( \mbox{Sch}_{\C})$ to  isomorphic classes of $\Q$-Gorenstein flat families $\X\rightarrow S$ of K-semistable $\Q$-Fano varieties with Hilbert polynomial equal to $h$ and  sends a morphism to  pull-back of families making the corresponding squared diagram commuting. Moreover, adding isomorphism (or isotropy) structure 
on this functor, we should naturally get a stack $\mathcal{M}_h$ on which we conjecture, refining \cite[Conjecture 1.3.1]{Spotti} and  \cite[Conjecture 5.2]{Od0} in the $\Q$-Fano case, the following:

\begin{conj} 
$\mathcal{M}_h$ is a KE moduli stack (cf. Definition \ref{KE moduli stack}) 
which has a categorical moduli algebraic space $$\mathcal{M}_h \rightarrow M_h, $$ where 
$M_h$ is a \emph{projective variety} (in general may not be irreducible) endowed with an ample CM line bundle. 
Especially, $M_h$ is a KE moduli variety in the sense of Definition \ref{KE moduli stack}. 

 Let $M_h^{GH}$ be the Gromov-Hausdorff compactification of the moduli space of smooth K\"ahler-Einstein Fano manifolds with Hilbert polynomial $h$. Then there is a  natural homeomorphism $$\Phi\colon M_h^{GH} \rightarrow M_h,$$ where we use the analytic topology on $M_h$. 
\end{conj}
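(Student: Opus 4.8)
The plan is to imitate, in arbitrary dimension, the four-step strategy used for Del Pezzo surfaces in the introduction, replacing the explicit GIT constructions of the surface case by general existence theorems for good moduli spaces. The first and most structural task is to produce the algebraic space $M_h$ itself. Here I would first establish \emph{boundedness} of the family of K-semistable $\Q$-Fano varieties with Hilbert polynomial $h$: by Proposition \ref{Fano limit} and the partial $C^0$ estimate, every such variety that arises as a Gromov-Hausdorff limit embeds into a fixed $\P^N$ by $|-kK|$ for uniform $k$ and $N$, and one expects, via uniform bounds on the anticanonical volume and on the singularities forced by K-semistability, that the whole K-semistable locus is parametrized by a locally closed subscheme $Z$ of a single Hilbert scheme. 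The stack $\M_h$ is then $[Z/PGL(N+1;\C)]$, and the content of part (1) of the conjecture is that this quotient stack admits a good moduli space $M_h$ in the sense of Definition \ref{KE moduli stack}. I would deduce this from two inputs: \emph{separatedness}, which follows from uniqueness of the K-polystable degeneration of a K-semistable family (an algebraic avatar of the Bando-Mabuchi uniqueness \cite{BM} used in Step (2) of the surface case), and \emph{properness}, which follows from the existence, after finite base change, of a K-semistable limit for any one-parameter family of K-semistable $\Q$-Fanos, together with the fact that each S-equivalence class contains a unique K-polystable representative.

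The second task is projectivity. Following the philosophy of Theorem \ref{CM stability}, I would descend the CM line bundle $\Lambda_{CM}$ of Paul-Tian from $Z$ to $M_h$ and prove that it is ample. The positivity of the CM degree along any nontrivial test configuration of a K-polystable fibre (the general Donaldson-Futaki positivity, compare the use of \cite{Wan} in the proof of Theorem \ref{CM stability}) shows that $\Lambda_{CM}$ is strictly positive on every non-constant complete curve in $M_h$; converting this curve-wise positivity into genuine ampleness, via a Nakai-Moishezon or Seshadri-type criterion on the good moduli space, is the delicate point and would yield that $M_h$ is a projective variety carrying an ample CM bundle.

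With $M_h$ in hand, the homeomorphism is built exactly as in the surface case. The map $\Phi\colon M_h^{GH}\to M_h$ sends a Gromov-Hausdorff limit to its isomorphism class, which lands in $M_h$ because such limits are K-polystable $\Q$-Fano varieties by Proposition \ref{Fano limit} and \cite{Berman}; its continuity is the general-dimensional analogue of Lemma \ref{continuity}, proved by realizing a convergent sequence inside a Luna slice of the Hilbert scheme through the Donaldson-Sun embedding \cite{DS}. Injectivity of $\Phi$ comes from the uniqueness of weak K\"ahler-Einstein metrics on a $\Q$-Fano variety up to automorphism, the singular extension of Bando-Mabuchi \cite{BM}. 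Surjectivity is obtained, as in Step (3), by showing that the image is open and closed: closedness is immediate from compactness of $M_h^{GH}$ and continuity of $\Phi$, while openness within the smooth locus is the deformation-openness of the K\"ahler-Einstein condition via the implicit function theorem, so that the closure of a dense open subset of smooth members already fills $M_h$. Finally, since $M_h^{GH}$ is compact and $M_h$ is Hausdorff, a continuous bijection is automatically a homeomorphism, as in Step (4).

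The hard part will be the two existence-type inputs that have no elementary substitute in general dimension. First, for surjectivity to reach the \emph{singular} K-polystable points one needs every such point of $M_h$ to be a Gromov-Hausdorff limit of smooth K\"ahler-Einstein Fanos, i.e. a Yau-Tian-Donaldson correspondence for $\Q$-Gorenstein smoothable $\Q$-Fano varieties producing an actual K\"ahler-Einstein metric from K-polystability; in the Del Pezzo case this was circumvented by the explicit description of $M_d$ together with Hypothesis \ref{Tian-Yau}. Second, promoting the curve-wise positivity of $\Lambda_{CM}$ to ampleness on a possibly singular, possibly reducible good moduli space is a genuine positivity problem, since the CM bundle is not even nef on the ambient parameter space, as noted before Theorem \ref{CM stability}. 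Both the properness of $\M_h$ and the ampleness of $\Lambda_{CM}$ rest ultimately on a good understanding of K-stability under degeneration---in particular on the uniqueness and existence of K-polystable degenerations---which is exactly where the present surface-by-surface analysis must be replaced by structural results that, in general dimension, remain the crux of the problem.
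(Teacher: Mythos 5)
The statement you are asked to prove is stated in the paper as a \emph{conjecture}, and the paper offers no proof of it: the authors explicitly say only that the present article ``settles the above conjecture for ($\mathbb{Q}$-Gorenstein smoothable) log del Pezzo surfaces, except \dots the statement about the CM line bundle.'' Your text is therefore not comparable to a proof in the paper, and, more importantly, it is not a proof at all --- it is a program whose essential inputs are exactly the open problems that make the statement a conjecture. Concretely: (i) boundedness of the K-semistable locus is asserted with ``one expects''; your appeal to Proposition \ref{Fano limit} only bounds Gromov--Hausdorff \emph{limits} of smooth K\"ahler--Einstein Fanos, not the full class of K-semistable $\Q$-Fano varieties with Hilbert polynomial $h$, so the parameter scheme $Z$ is not constructed. (ii) The existence of a categorical/good moduli space for $[Z/PGL(N+1;\C)]$ is reduced to ``separatedness'' and ``properness,'' which you in turn reduce to existence and uniqueness of K-polystable degenerations of K-semistable families; no argument is given for either, and the ``algebraic avatar of Bando--Mabuchi'' you invoke is precisely the missing theorem, not a consequence of \cite{BM}. (iii) Surjectivity of $\Phi$ onto the singular points of $M_h$ requires producing a K\"ahler--Einstein metric from K-polystability for smoothable $\Q$-Fanos, which you correctly flag as unavailable; in the surface case the paper avoids this only through the explicit degree-by-degree classification, which has no analogue here. (iv) The passage from curve-wise positivity of $\Lambda_{CM}$ to ampleness on $M_h$ is named ``delicate'' but no mechanism is proposed; the paper's own Theorem \ref{CM stability} needs Picard rank one of the base and a $G$-equivariant family, hypotheses that fail for a general good moduli space.

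In short, the architecture you describe is the right one (it mirrors the four-step strategy of the introduction and the philosophy of Section \ref{K moduli}), but every load-bearing step is deferred to a result that neither the paper nor your proposal supplies. As written, the proposal establishes nothing beyond what the paper already says in its discussion of the conjecture; to count as progress it would have to prove at least one of the four inputs above, each of which is a substantial theorem in its own right.
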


This paper explicitly 
settles the above conjecture for ($\mathbb{Q}$-Gorenstein smoothable) log del Pezzo surface case, except the issue in the previous subsection and  the statement about the  CM line bundle. A remark  is that the 
CM line bundle \cite{PT} can be naturally regarded as a line bundle on $\mathcal{M}_h$ 
and so by ``CM line bundle on $M_h$" we mean a $\mathbb{Q}$-line bundle descended  from $\mathcal{M}_h$. 
The descent is possible for each $U_i\twoheadrightarrow U_i//G$ in the context of Definition \ref{KE moduli stack} 
since for each K-semistable $x\in U_i$ the action of the identity component of the isotropy group of $G$ on the CM line over $x$ is trivial 
by the weight interpretation of vanishing of Futaki invariant \cite{PT}. 
They canonically patch together due to the canonical uniqueness of 
the descended line bundle on each $U_i//G$. 

From the point of view of the authors, one way towards establishing the above conjecture in higher dimensions is by combining the algebraic and differential geometric techniques, as we did in this article. In many concrete situations one can hope to construct the above KE moduli stack by glueing together quotient stacks from different GIT. This also fits into the general conjecture on Artin stack 
\cite[Conjecture 1]{Alp}. 

Finally we remark that the points in the boundary $M_h \setminus M_h^{0}$ should  correspond to $\Q$-Fano varieties, admitting weak K\"ahler-Einstein metrics in the  sense of pluripotential theory \cite{EGZ}. 
This is known for $M_{h}^{GH}\setminus M_h^0$, see \cite{DS}.

\end{document}